\DeclareRobustCommand{\rvdots}{%
  \vbox{
    \baselineskip4\p@\lineskiplimit\z@
    \kern-\p@
    \hbox{.}\hbox{.}\hbox{.}
  }}
\newcommand{\Mod}[1]{\ (\textup{mod}\ #1)}
\def\moverlay{\mathpalette\mov@rlay}
\def\mov@rlay#1#2{\leavevmode\vtop{%
   \baselineskip\z@skip \lineskiplimit-\maxdimen
   \ialign{\hfil$\m@th#1##$\hfil\cr#2\crcr}}}
\newcommand{\charfusion}[3][\mathord]{
    #1{\ifx#1\mathop\vphantom{#2}\fi
        \mathpalette\mov@rlay{#2\cr#3}
      }
    \ifx#1\mathop\expandafter\displaylimits\fi}
\theoremstyle{plain} 
\newtheorem{theorem}{\indent\sc Theorem}[section]
\newtheorem{lemma}[theorem]{\indent\sc Lemma}
\newtheorem{corollary}[theorem]{\indent\sc Corollary}
\newtheorem{proposition}[theorem]{\indent\sc Proposition}
\theoremstyle{definition} 
\newtheorem{definition}[theorem]{\indent\sc Definition}
\newtheorem{remark}[theorem]{\indent\sc Remark}
\newtheorem{example}[theorem]{\indent\sc Example}
\newtheorem{thmx}{Theorem}
\def\address#1#2{\begingroup
\noindent\parbox[t]{7.8cm}{%
\small{\scshape\ignorespaces#1}\par\vskip1ex
\noindent\small{\itshape E-mail address}%
\/: #2\par\vskip4ex}\hfill%
\endgroup}%
\title{Class fields and form class groups for solving certain quadratic Diophantine equations}
\author{
\textsc{Ho Yun Jung, Ja Kyung Koo, Dong Hwa Shin and Dong Sung Yoon$^*$} 
}
\date{} 
\begin{document}

\allowdisplaybreaks

\maketitle

\footnote{ 
2020 \textit{Mathematics Subject Classification}. Primary 11R37; Secondary 11E12, 11R65.}
\footnote{ 
\textit{Key words and phrases}. Class field theory, form class groups, Kronecker's congruence relation, orders.} \footnote{
\thanks{$^*$Corresponding author.\\
The first named author was supported by the National Research Foundation of Korea (NRF) grant funded by the Korea government (MSIT) (No. RS-2023-00252986).
The third named author was supported
by Hankuk University of Foreign Studies Research Fund of 2024 and by the National Research Foundation of Korea (NRF) grant funded by the Korea government (MSIT) (No. RS-2023-00241953).
}
}

\begin{abstract}
Let $K$ be an imaginary quadratic field and $\mathcal{O}$ be an order in $K$.
We construct class fields associated with form class groups which are isomorphic to
certain $\mathcal{O}$-ideal class groups
in terms of the theory of canonical models due to Shimura. 
As its applications, by using such class fields, for a positive integer $n$ 
we first find primes of the form $x^2+ny^2$ with additional conditions on $x$ and $y$.  
Second, by utilizing these form class groups, we derive a
congruence relation on special values of a modular function of higher level
as an analogue of Kronecker's congruence relation. 
\end{abstract}

\maketitle

\tableofcontents

\section {Introduction}

In the Disquisitiones Arithmeticae published in 1801 Gauss (\cite{Gauss}) introduced
the direct composition on the set of primitive positive definite binary quadratic forms of given (even) discriminant.
After Gauss, there had been several works done to generalize Gauss' composition to an arbitrary ring
(\cite{B-D}, \cite{B-P}, \cite{Kaplansky}, \cite{Kneser}). And in 2004 Bhargava discovered higher
composition laws and gave a simplification of Gauss' composition in view of certain cubes of integers
(\cite{Bhargava}). 
Furthermore, Wood (\cite{Wood}) gave a complete statement of the relationship between binary quadratic forms
and modules for quadratic algebras over any base ring or scheme.
\par
On the other hand, Gauss first paved a new road of using class field in order to solve the famous 
quadratic equation $x^2+y^2=p$ ($p$ a prime) as follows\,:
\begin{equation*}
x^2+y^2=p\quad\Longleftrightarrow\quad
\textrm{$p=2$ or $p$ splits completely in $\mathbb{Q}(\sqrt{-1})$}.
\end{equation*}
Here $\mathbb{Q}(\sqrt{-1})$ is a class field (=\,abelian extension) of $\mathbb{Q}$.
And, by the effort of many people including H. Weber, D. Hilbert and E. Artin
the above equation was further extended to find primes of the form $x^2+ny^2$ ($n$ is a positive integer)
(see \cite{Cox}). 
\par
Let $K$ be an imaginary quadratic field of discriminant $d_K$ with ring of integers
$\mathcal{O}_K$, and $\mathcal{O}$ be an order in $K$. 
We denote by $\ell_\mathcal{O}$ and $D_\mathcal{O}$ the conductor and the discriminant of $\mathcal{O}$,
respectively.  
Let $N$ be a positive integer and $G$ be a subgroup of the unit group $(\mathbb{Z}/N\mathbb{Z})^\times$. 
Let $I(\mathcal{O})$ be the group of proper fractional $\mathcal{O}$-ideals and 
 $P(\mathcal{O})$ be its subgroup of principal fractional $\mathcal{O}$-ideals. 
 Generalizing the $\mathcal{O}$-ideal class group $\mathcal{C}(\mathcal{O})=I(\mathcal{O})/P(\mathcal{O})$, 
we define the group $\mathcal{C}_G(\mathcal{O},\,N)$ by 
\begin{equation*}
\mathcal{C}_G(\mathcal{O},\,N)=I(\mathcal{O},\,N)/P_G(\mathcal{O},\,N)
\end{equation*}
where $I(\mathcal{O},\,N)$ and $P_G(\mathcal{O},\,N)$ are subgroups of $I(\mathcal{O})$ and $P(\mathcal{O})$ given by
\begin{eqnarray*}
I(\mathcal{O},\,N)&=&\langle\mathfrak{a}~|~\textrm{$\mathfrak{a}$ is
a nontrivial proper $\mathcal{O}$-ideal prime to $N$}\rangle,\\
P_G(\mathcal{O},\,N)&=&\langle\nu\mathcal{O}~|~\nu\in\mathcal{O}\setminus\{0\}~\textrm{and}~
\nu\equiv t\Mod{N\mathcal{O}}~\textrm{for some}~t\in\mathbb{Z}~
\textrm{satisfying}~t+N\mathbb{Z}\in G\rangle,
\end{eqnarray*}
respectively. We see that if $N=1$, then $\mathcal{C}_G(\mathcal{O},\,N)$ is nothing but $\mathcal{C}(\mathcal{O})$. 
When $G$ is the trivial subgroup of $(\mathbb{Z}/N\mathbb{Z})^\times$, 
the class field of $K$ associated with $\mathcal{C}_G(\mathcal{O},\,N)$ was first considered by 
H. S\"{o}hngen (\cite{Sohngen}) and later investigated by P. Stevenhagen (\cite{Stevenhagen}). See also \cite{Cho} and 
\cite[$\S$15]{Cox}. 
\par
On the other hand,  let $\mathcal{Q}(D_\mathcal{O},\,N)$ be the set of binary quadratic forms
given by
\begin{equation*}
\mathcal{Q}(D_\mathcal{O},\,N)=\{ax^2+bxy+cy^2\in\mathbb{Z}[x,\,y]~|~
\gcd(a,\,b,\,c)=\gcd(a,\,N)=1,~b^2-4ac=D_\mathcal{O},~a>0\}.
\end{equation*}
The congruence subgroup
\begin{equation*}
\Gamma_G=\left\{\gamma\in\mathrm{SL}_2(\mathbb{Z})~|~
\gamma\equiv\begin{bmatrix}\mathrm{t}&\mathrm{*}\\0&\mathrm{*}\end{bmatrix}
\Mod{NM_2(\mathbb{Z})}~\textrm{for some interger $t$ such that
$t+N\mathbb{Z}\in G$}\right\}
\end{equation*}
of $\mathrm{SL}_2(\mathbb{Z})$ gives rise to an equivalence relation
$\sim_{\Gamma_G}$ on $\mathcal{Q}(D_\mathcal{O},\,N)$ as follows : 
for $Q,\,Q'\in\mathcal{Q}(D_\mathcal{O},\,N)$
\begin{equation*}
Q\sim_{\Gamma_G} Q'~\Longleftrightarrow~
Q'\left(\begin{bmatrix}x\\y\end{bmatrix}\right)
=Q\left(\gamma\begin{bmatrix}x\\y\end{bmatrix}\right)~
\textrm{for some}~\gamma\in\Gamma_G. 
\end{equation*}
We denote the set of equivalence classes by $\mathcal{C}_{\Gamma_G}(D_\mathcal{O},\,N)$, that is,
\begin{equation*}
\mathcal{C}_{\Gamma_G}(D_\mathcal{O},\,N)=\mathcal{Q}(D_\mathcal{O},\,N)/
\sim_{\Gamma_G}.
\end{equation*}
For each $Q(x,\,y)=ax^2+bxy+cy^2\in\mathcal{Q}(D_\mathcal{O},\,N)$, let 
$\omega_Q$ be the zero of the quadratic polynomial $Q(x,\,1)$ lying in the
complex upper half-plane $\mathbb{H}=\{\tau\in\mathbb{C}~|~
\mathrm{Im}(\tau)>0\}$, namely,
\begin{equation}\label{w_Q}
\omega_Q=\frac{-b+\sqrt{D_\mathcal{O}}}{2a}.
\end{equation} 
If $N=1$, then $\Gamma_G=\mathrm{SL}_2(\mathbb{Z})$ and
$\sim_{\Gamma_G}$ is Gauss' proper equivalence. 
It is well known that the Gauss direct composition or the Dirichlet composition 
endows $\mathcal{C}_{\mathrm{SL}_2(\mathbb{Z})}(D_\mathcal{O},\,1)$ with
the group structure so that the mapping
\begin{equation*}
\mathcal{C}_{\mathrm{SL}_2(\mathbb{Z})}(D_\mathcal{O},\,1)\rightarrow\mathcal{C}(\mathcal{O}),\quad[Q]\mapsto[[\omega_Q,\,1]]=[\mathbb{Z}\omega_Q+\mathbb{Z}]
\end{equation*}
becomes an isomorphism (\cite[$\S$3.A and $\S$7.B]{Cox}). 
Furthermore, Chen and Yui showed that if  $\mathcal{O}=\mathcal{O}_K$, $G=(\mathbb{Z}/N\mathbb{Z})^\times$, and so 
$D_\mathcal{O}=d_K$, $\Gamma_G=\Gamma_0(N)$, then the mapping
\begin{equation*}
\mathcal{C}_{\Gamma_0(N)}(d_K,\,N)\rightarrow I(\mathcal{O}_K,\,N)/P_{(\mathbb{Z}/N\mathbb{Z})^\times}(\mathcal{O}_K,\,N)~
(\simeq\mathcal{C}(\mathcal{O})),\quad[Q]\mapsto[[\omega_Q,\,1]]
\end{equation*}
is bijective (\cite[Prposition 4.1 and Theorem 4.4]{C-Y}). 
In this paper, generalizing the above special cases, we shall prove the following theorem in
terms of the theory of canonical models for modular curves due to G. Shimura. 

\begin{thmx}[Theorem \ref{Gformclassgroup}]
The map
\begin{eqnarray*}
\psi_{\Gamma_G,\,P_G(\mathcal{O},\,N)}~:~
\mathcal{C}_{\Gamma_G}(D_\mathcal{O},\,N) &\rightarrow&  I(\mathcal{O},\,N)/P_G(\mathcal{O},\,N)\\
\mathrm{[}Q] &\mapsto& [[\omega_Q,\,1]]
\end{eqnarray*}
is a well-defined bijection, and hence the group $\Gamma_G$ induces a form class group of level $N$
\textup{(}in the sense of \textup{Definition \ref{induce}}\textup{)}. 
\end{thmx}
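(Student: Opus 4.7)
The plan is to adapt the classical $N=1$ bijection between $\mathrm{SL}_2(\mathbb{Z})$-classes of forms and $\mathcal{O}$-ideal classes (see, e.g., \cite[$\S$7.B]{Cox}) step by step to the level-$N$ setting, carefully tracking the extra congruence data encoded by $G$. To start, I would check that $[\omega_Q,1]\in I(\mathcal{O},N)$: since $a\omega_Q$ is a root of $X^2+bX+ac$, one has $\mathbb{Z}[a\omega_Q]=\mathcal{O}$, hence $[\omega_Q,1]=\frac{1}{a}[a\omega_Q,a]$ is a proper $\mathcal{O}$-ideal; both $[a\omega_Q,a]$ (of norm $a$) and the principal ideal $(a)$ are coprime to $N$ because $\gcd(a,N)=1$.

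Next, for compatibility with the equivalence relations, suppose $Q'=Q\circ\gamma$ with $\gamma=\begin{bmatrix}p&q\\r&s\end{bmatrix}\in\Gamma_G$, so $r\equiv 0\Mod{N}$ and $p\equiv t\Mod{N}$ for some $t$ with $t+N\mathbb{Z}\in G$. A direct root computation gives $\omega_{Q'}=\gamma^{-1}\omega_Q$, and since $\gamma^{-1}\in\mathrm{SL}_2(\mathbb{Z})$ permutes a $\mathbb{Z}$-basis of $[\omega_Q,1]$, one gets $[\omega_{Q'},1]=(-r\omega_Q+p)^{-1}[\omega_Q,1]$. To place the principal ideal generated by $\alpha=-r\omega_Q+p$ in $P_G(\mathcal{O},N)$, I would choose $u\in\mathbb{Z}$ with $ua\equiv 1\Mod{N}$ and set $\nu_1=ua\alpha$, $\nu_2=ua$; using $a\omega_Q\in\mathcal{O}$ and $ur\in N\mathbb{Z}$, one verifies $\nu_1,\nu_2\in\mathcal{O}$, $\nu_2\equiv 1\Mod{N\mathcal{O}}$, and $\nu_1\equiv uap\equiv t\Mod{N\mathcal{O}}$, so that $\alpha\mathcal{O}=\nu_1\mathcal{O}\cdot(\nu_2\mathcal{O})^{-1}\in P_G(\mathcal{O},N)$.

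For surjectivity, every class in $I(\mathcal{O},N)/P_G(\mathcal{O},N)$ admits, by the $N=1$ bijection, a representative $\mu[\omega_{Q_0},1]$ for some primitive $Q_0=a_0x^2+b_0xy+c_0y^2$ of discriminant $D_\mathcal{O}$. A short sieve argument --- for each prime $\ell\mid N$ one of $Q_0(1,0)=a_0$, $Q_0(0,1)=c_0$, $Q_0(1,1)=a_0+b_0+c_0$ is a unit mod $\ell$ by primitivity, then apply the Chinese Remainder Theorem --- yields a coprime pair $(p,r)\in\mathbb{Z}^2$ with $\gcd(Q_0(p,r),N)=1$; completing $(p,r)$ to $\gamma\in\mathrm{SL}_2(\mathbb{Z})$ and taking $Q=Q_0\circ\gamma$ gives an element of $\mathcal{Q}(D_\mathcal{O},N)$ whose class maps to the given one.

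The main obstacle I anticipate is injectivity. Suppose $[\omega_Q,1]$ and $[\omega_{Q'},1]$ agree in $I(\mathcal{O},N)/P_G(\mathcal{O},N)$; writing the connecting principal factor as $\mu_1/\mu_2$ with $\mu_i\in\mathcal{O}$, $\mu_i\equiv t_i\Mod{N\mathcal{O}}$ and $t_i+N\mathbb{Z}\in G$, the lattice identity $\mu_2[\omega_Q,1]=\mu_1[\omega_{Q'},1]$ yields a $\mathbb{Z}$-basis change $\gamma\in\mathrm{GL}_2(\mathbb{Z})$ with $\mu_2\omega_Q=p\mu_1\omega_{Q'}+q\mu_1$ and $\mu_2=r\mu_1\omega_{Q'}+s\mu_1$; the common orientation of $\omega_Q,\omega_{Q'}\in\mathbb{H}$ forces $\det\gamma=+1$, so $\gamma\in\mathrm{SL}_2(\mathbb{Z})$ and $Q\sim_{\mathrm{SL}_2(\mathbb{Z})}Q'$. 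The delicate step is to confirm that $\gamma$ lies in $\Gamma_G$: multiplying the second equation by the leading coefficient $a'$ of $Q'$ and reducing modulo $N\mathcal{O}$ in the $\mathbb{Z}$-basis $\{1,a'\omega_{Q'}\}$ of $\mathcal{O}$ yields $rt_1\equiv 0\Mod{N}$ and $a'(st_1-t_2)\equiv 0\Mod{N}$; since $\gcd(t_1,N)=1$ the first gives $r\equiv 0\Mod{N}$, and combined with $\det\gamma=1$ and $\gcd(a',N)=1$ the second forces $p\equiv t_1t_2^{-1}\Mod{N}\in G$. The remaining clause that $\Gamma_G$ induces a form class group structure of level $N$ then follows by transporting the group law on $I(\mathcal{O},N)/P_G(\mathcal{O},N)$ across $\psi_{\Gamma_G,P_G(\mathcal{O},N)}$.
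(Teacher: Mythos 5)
Your route is genuinely different from the paper's: the paper obtains Theorem \ref{Gformclassgroup} from Shimura's theory of canonical models (the idelic computation in the proof of Theorem \ref{generation} together with the criterion of Proposition \ref{criterion}), while you argue directly on forms and lattices in the classical style. Your well-definedness step (writing $j(\gamma^{-1},\omega_Q)\mathcal{O}=(\nu_1\mathcal{O})(\nu_2\mathcal{O})^{-1}$ with $\nu_1=ua\,j(\gamma^{-1},\omega_Q)$, $\nu_2=ua$, and checking $\nu_1\equiv t$, $\nu_2\equiv1\Mod{N\mathcal{O}}$) and your injectivity step (reducing the basis-change relation modulo $N\mathcal{O}$ in the basis $\{a'\omega_{Q'},1\}$ to force $r\equiv0$ and $p\equiv t_1t_2^{-1}\Mod{N}$, the latter lying in $G$) are essentially correct, and together they show $P_{\Gamma_G}\subseteq P_G(\mathcal{O},N)$ and that $\psi_{\Gamma_G,\,P_G(\mathcal{O},N)}$ is a well-defined injection.

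The genuine gap is in surjectivity. After replacing $Q_0$ by $Q=Q_0^{\gamma}\in\mathcal{Q}(D_\mathcal{O},N)$ you only know $[\omega_Q,1]=\lambda^{-1}\mathfrak{a}$ with $\lambda=\mu\,j(\gamma^{-1},\omega_{Q_0})\in K^\times$; for $[Q]$ to hit $[\mathfrak{a}]$ you need $\lambda\mathcal{O}\in P_G(\mathcal{O},N)$, and nothing in your construction controls the residue of $\lambda$ modulo $N\mathcal{O}$. Since $P_G(\mathcal{O},N)$ is in general a proper subgroup of the group generated by principal ideals prime to $N$ (the quotient $I(\mathcal{O},N)/P_G(\mathcal{O},N)$ is strictly larger than $\mathcal{C}(\mathcal{O})$), matching the target class only up to an arbitrary principal factor proves nothing about the class in $I(\mathcal{O},N)/P_G(\mathcal{O},N)$. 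Concretely, take $K=\mathbb{Q}(i)$, $\mathcal{O}=\mathbb{Z}[i]$, $N=5$, $G=\{1+5\mathbb{Z}\}$, and the class of $\mathfrak{a}=2\mathbb{Z}[i]$, which is nontrivial in $I(\mathcal{O},5)/P_G(\mathcal{O},5)$: writing $\mathfrak{a}=2\,[\,i,1\,]$ with $Q_0=x^2+y^2$ (already in $\mathcal{Q}(-4,5)$, so your sieve makes no change), your recipe returns the principal form, whose image is the trivial class, not $[\mathfrak{a}]$. To close the gap you must produce in each class an ideal of the special shape $\mathfrak{b}\,(a\mathcal{O})^{-1}$ with $\mathfrak{b}$ integral of norm $a$ and $a+N\mathbb{Z}\in G$ --- equivalently, establish first the case $G=\{1+N\mathbb{Z}\}$ (Proposition \ref{Gamma_1(N)}, which the paper imports from \cite{J-K-S-Y23}) and then pass to the quotient by $P_G(\mathcal{O},N)/P_{\{1+N\mathbb{Z}\}}(\mathcal{O},N)$ as in Figure \ref{homo}; the paper itself sidesteps this entirely via the canonical-model criterion.
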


Here, we notice that the case where $G=\{1+N\mathbb{Z}\}$ was dealt with
in authors' recent paper \cite{J-K-S-Y23}.
\par
Since $\mathcal{C}_G(\mathcal{O},\,N)$ is isomorphic to a generalized ideal class group of $K$ modulo $\ell_\mathcal{O}N\mathcal{O}_K$
(Corollary \ref{CK}), there exists a unique abelian extension $K_{\mathcal{O},\,G}$ of $K$
in which every ramified prime divides $\ell_\mathcal{O}N\mathcal{O}_K$
and the generalized ideal class group is isomorphic to $\mathrm{Gal}(K_{\mathcal{O},\,G}/K)$ 
via the Artin map for the modulus $\ell_\mathcal{O}N\mathcal{O}_K$. 
On the other hand, as is well known, the cyclotomic field
$\mathbb{Q}(\zeta_N)$ with $\zeta_N=e^{2\pi\mathrm{i}/N}$ is a Galois extension of $\mathbb{Q}$ whose 
Galois group is isomorphic to $(\mathbb{Z}/N\mathbb{Z})^\times$.
Let $k_G$ be the fixed field of $\mathbb{Q}(\zeta_N)$ by $\det(G^2)$,
and let 
$\mathcal{F}_{\Gamma_G,\,k_G}$ be the field of meromorphic modular functions for $\Gamma_G$
whose Fourier coefficients belong to $k_G$. 
Here, we mean $G^2=\{g^2\,|\,g\in G\}$.
\par
Define the element $\tau_\mathcal{O}$ of $\mathbb{H}$ by
\begin{equation}\label{tauO}
\tau_\mathcal{O}=\left\{\begin{array}{cl}
\displaystyle\frac{-1+\sqrt{D_\mathcal{O}}}{2} & \textrm{if}~D_\mathcal{O}\equiv1\Mod{4},\vspace{0.1cm}\\
\displaystyle\frac{\sqrt{D_\mathcal{O}}}{2} & \textrm{if}~D_\mathcal{O}\equiv0\Mod{4}.
\end{array}\right. 
\end{equation}
Then we shall describe $\mathcal{C}_{\Gamma_G}(D_\mathcal{O},\,N)$
as the Galois group $\mathrm{Gal}(K_{\mathcal{O},\,G}/K)$ 
in an explicit way. 

\begin{thmx}[Corollary \ref{phiOG}]
We have $K_{\mathcal{O},\,G}=K\left(h(\tau_\mathcal{O})~|~h\in\mathcal{F}_{\Gamma_G,\,k_G}~\textrm{is finite at}~\tau_\mathcal{O}\right)$ 
and we get an isomorphism
\begin{eqnarray*}
\phi_{\mathcal{O},\,G}~:~\mathcal{C}_{\Gamma_G}(D_\mathcal{O},\,N)&\stackrel{\sim}{\rightarrow}&\mathrm{Gal}(K_{\mathcal{O},\,G}/K)\\
\mathrm{[}Q] & \mapsto & \left(
h(\tau_\mathcal{O})\mapsto h^{\left[\begin{smallmatrix}
1 & -a'(\frac{b+b_\mathcal{O}}{2})\\
0& a'\end{smallmatrix}\right]}(-\overline{\omega}_Q)~|~h\in\mathcal{F}_{\Gamma_G,\,k_G}~\textrm{is finite at}~\tau_\mathcal{O}\right)
\end{eqnarray*}
where $Q=ax^2+bxy+cy^2\in\mathcal{Q}(D_\mathcal{O},\,N)$ and $a'$ is an integer such that $aa'\equiv1\Mod{N}$. 
\end{thmx}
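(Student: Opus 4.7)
The plan is to combine Theorem A (the bijection $\psi_{\Gamma_G,\,P_G(\mathcal{O},\,N)}$ between form and ideal class groups), Corollary \ref{CK} (recasting $\mathcal{C}_G(\mathcal{O},\,N)$ as a generalized ideal class group of $K$ modulo $\ell_\mathcal{O}N\mathcal{O}_K$), and class field theory with Shimura's reciprocity law for the canonical model of the modular curve attached to $\Gamma_G$ over $k_G$. The first three ingredients yield an abstract isomorphism $\mathcal{C}_{\Gamma_G}(D_\mathcal{O},\,N) \cong \mathrm{Gal}(K_{\mathcal{O},\,G}/K)$; what remains is to show that $K_{\mathcal{O},\,G}$ is generated over $K$ by the values $h(\tau_\mathcal{O})$ for $h \in \mathcal{F}_{\Gamma_G,\,k_G}$ finite at $\tau_\mathcal{O}$, and that the isomorphism is given by the explicit formula in the statement.

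For these two verifications I would proceed as follows. The field $\mathcal{F}_{\Gamma_G,\,k_G}$ is the function field over $k_G$ of the canonical model of $\Gamma_G \backslash \mathbb{H}^\ast$ in the sense of Shimura, and $\tau_\mathcal{O}$ is a CM point corresponding to the pair $(\mathbb{C}/\mathcal{O},\,\textrm{level-}N\textrm{ structure})$. Shimura's main theorem of complex multiplication identifies the field $L$ generated over $K$ by these special values as an abelian extension of $K$ with an explicit idele-theoretic Galois group; translating the stabilizer from ideles to ideals yields precisely the subgroup $P_G(\mathcal{O},\,N)$ of $I(\mathcal{O},\,N)$, so that by Corollary \ref{CK} and the uniqueness of class fields one obtains $L = K_{\mathcal{O},\,G}$. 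To compute the Galois action, for $[Q] \in \mathcal{C}_{\Gamma_G}(D_\mathcal{O},\,N)$ with $Q = ax^2 + bxy + cy^2$ I would choose a finite idele representing the $\mathcal{O}$-ideal $\mathfrak{a}_Q = [\omega_Q,\,1]$ and invoke Shimura's reciprocity: the Artin symbol $\sigma_{\mathfrak{a}_Q}$ sends $h(\tau_\mathcal{O})$ to $h^{g}(\tau')$, where $\tau'$ is the CM point of $\mathfrak{a}_Q^{-1}$ and $g$ is the transition between the $\mathbb{Z}$-basis $\{\tau_\mathcal{O},\,1\}$ of $\mathcal{O}$ and a basis of $\mathfrak{a}_Q$ induced by the chosen idele. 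Using $\mathfrak{a}_Q^{-1} = N(\mathfrak{a}_Q)^{-1}\overline{\mathfrak{a}_Q}$, which is homothetic in $\mathbb{H}$ to $[-\overline{\omega}_Q,\,1]$, gives $\tau' = -\overline{\omega}_Q$; and writing a $\mathbb{Z}$-basis of the scaled lattice as $\{\tau_\mathcal{O} + (b+b_\mathcal{O})/2,\,a\}$ relative to $\{\tau_\mathcal{O},\,1\}$ produces the transition matrix $\left[\begin{smallmatrix} 1 & (b+b_\mathcal{O})/2 \\ 0 & a \end{smallmatrix}\right]$, whose inverse modulo $N$ is exactly the matrix $\gamma$ appearing in the statement.

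The main obstacle will be the careful bookkeeping of Shimura's reciprocity in the present generality — for a non-maximal order $\mathcal{O}$, so that local contributions at primes dividing the conductor $\ell_\mathcal{O}$ need care, and for an arbitrary congruence subgroup $\Gamma_G$ rather than a standard one such as $\Gamma_0(N)$ or $\Gamma(N)$. In particular, one must verify well-definedness of the formula on $\Gamma_G$-equivalence classes (noting that the matrix $\gamma$ is only meaningful modulo $N$ and independent of the choice of integer $a'$ with $aa' \equiv 1 \Mod{N}$), and reconcile the Galois action on the Fourier coefficients of functions in $\mathcal{F}_{\Gamma_G,\,k_G}$ with the base field $k_G$; this last point is where the factor $\det(G^2)$ rather than $\det(G)$ enters, via the determinant of the level-$N$ action of $\gamma$ on $k_G$.
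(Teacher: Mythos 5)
Your proposal is essentially correct in outline, but it takes a genuinely different route from the paper. You propose to run Shimura's canonical-model machinery directly at the level of $\Gamma_G$ with constant field $k_G$: realize $\mathcal{F}_{\Gamma_G,\,k_G}$ as $\mathcal{F}_S$ for a suitable open group $S$ with $k_S=k_G$, compute $K^\times q_{\tau_\mathcal{O}}^{-1}(S)$ to identify the generated field with $K_{\mathcal{O},\,G}$, and then extract the explicit Galois action by reciprocity; your lattice bookkeeping (the CM point $-\overline{\omega}_Q$ attached to $\overline{[\omega_Q,\,1]}$, the transition matrix $\left[\begin{smallmatrix}1&\frac{b+b_\mathcal{O}}{2}\\0&a\end{smallmatrix}\right]$, whose inverse is $\equiv\left[\begin{smallmatrix}1&-a'(\frac{b+b_\mathcal{O}}{2})\\0&a'\end{smallmatrix}\right]$ modulo $N$ up to scalars) is consistent with the stated formula, and the key reason your plan closes is the one you hint at: for torus elements $q_{\tau_\mathcal{O}}(s)$ with $s_p\equiv t\Mod{N\mathcal{O}_p}$ the determinant is automatically $\equiv t^2\Mod{N}$, hence lies in $G^2$, so enlarging the coefficient field from $\mathbb{Q}$ to $k_G$ does not enlarge the class field. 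The paper proceeds differently and more economically: it already has Theorem \ref{generation} (generation by $\mathcal{F}_{\Gamma_G,\,\mathbb{Q}}$-values), the level-$N$ generation (\ref{F_Ngeneration}), and the explicit $\Gamma_1(N)$-reciprocity of Proposition \ref{CG} (imported from the companion paper, which is precisely where the non-maximal-order bookkeeping you defer as ``the main obstacle'' is carried out); it sandwiches $L=K\left(h(\tau_\mathcal{O})\,|\,h\in\mathcal{F}_{\Gamma_G,\,k_G}\right)$ between $K_{\mathcal{O},\,G}$ and $K_{\mathcal{O},\,\{1+N\mathbb{Z}\}}$, uses Theorem \ref{Gformclassgroup} to see that $\mathrm{Gal}(K_{\mathcal{O},\,\{1+N\mathbb{Z}\}}/K_{\mathcal{O},\,G})$ corresponds to classes $[Q_0^\gamma]$ with $\gamma\in\Gamma_G$, and shows by a short matrix computation (here $a\equiv t^2$, so $a'\equiv t^{-2}\in G^2$ and the coefficient twist $\sigma_{a'}$ fixes $k_G$) that these automorphisms fix $L$, forcing $L=K_{\mathcal{O},\,G}$; the explicit isomorphism then follows from Proposition \ref{CG} by restriction along the natural surjection $\mathcal{C}_{\Gamma_1(N)}(D_\mathcal{O},\,N)\twoheadrightarrow\mathcal{C}_{\Gamma_G}(D_\mathcal{O},\,N)$, which also settles well-definedness on classes without further computation. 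Your route buys a self-contained derivation at level $\Gamma_G$, but the steps you postpone (that $k_S=k_G$, the idele computation, and reciprocity at CM points of non-maximal orders) are the real content and would have to be written out; the paper's descent from the $\{1+N\mathbb{Z}\}$ case avoids exactly that.
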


Now, let $n$ be a positive integer. 
In 2010\, B. Cho (\cite{Cho}) first studied
primes of the form $x^2+ny^2$ with additional conditions on $x$ and $y$, precisely
$x\equiv1\Mod{N}$ and $y\equiv0\Mod{N}$ in view of the class field theory. 
We shall further extend his result as follows.
 
\begin{thmx}[Lemma \ref{equivalence}, Theorem \ref{x^2+ny^2}]
Given a positive integer $n$, there is a monic irreducible polynomial $f(X)\in\mathbb{Z}[X]$ for which 
if $p$ is a prime dividing neither $2nN$ nor the discriminant of $f(X)$, then 
\begin{eqnarray*}
&&\textrm{$p=x^2+ny^2$ for some $x,\,y\in\mathbb{Z}$
such that $x+N\mathbb{Z}\in G$ and $y\equiv0\Mod{N}$}\\
&\Longleftrightarrow&\textrm{$p$ splits completely in $K_{\mathcal{O},\,G}$}\\
&\Longleftrightarrow& 
\left(\frac{-n}{p}\right)=1~\textrm{and $f(X)\equiv0\Mod{p}$ has an integer solution. }
\end{eqnarray*}
\end{thmx}

Lastly, for a lattice $L$ in $\mathbb{C}$, let $j(L)$ denote the
invariant of an elliptic curve isomorphic to $\mathbb{C}/L$. 
To establish the first main theorem of complex multiplication, Hasse (\cite{Hasse}) showed that
for all but a finite number of primes $p$ which are decomposed with respect to $\mathcal{O}$ as
$p\mathcal{O}=\mathfrak{p}\overline{\mathfrak{p}}$, the congruence 
\begin{equation}\label{Kronecker}
j(\mathfrak{p}^{-1}\mathfrak{a})\equiv j(\mathfrak{a})^p\Mod{\mathfrak{P}}
\end{equation}
holds for any proper fractional $\mathcal{O}$-ideal $\mathfrak{a}$ and
any prime $\mathfrak{P}$ of the ring class field $H_\mathcal{O}$
of order $\mathcal{O}$ lying above $\mathfrak{p}\mathcal{O}_K$. 
The congruence (\ref{Kronecker}) 
is called Kronecker's (or, Hasse's) congruence relation. 
As an analogue of (\ref{Kronecker}), we shall derive a congruence relation on 
special values of a modular function of higher level by utilizing the form class group $\mathcal{C}_{\Gamma_G}(D_\mathcal{O},\,N)$.

\begin{thmx}[Theorem \ref{congruence}]\label{app1}
Let $f$ be a meromorphic modular function for $\Gamma_G$ with rational Fourier coefficients which is
integral over $\mathbb{Z}[j]$. 
If $p$ is a prime such that
\begin{enumerate}
\item[\textup{(i)}]  it is relatively prime to $D_\mathcal{O}N$,
\item[\textup{(ii)}] it is decomposed with respect to $\mathcal{O}$,
\item[\textup{(iii)}] $p+N\mathbb{Z}\in G$ or $-p+N\mathbb{Z}\in G$,
\end{enumerate}
then we have the congruence relation
\begin{equation*}
\left(f(\omega)^p-f\left(\frac{\omega}{p}\right)\right)
\left(f(\omega)-f\left(\frac{\omega}{p}\right)^p\right)\equiv0\Mod{p\mathcal{O}_{K_{\mathcal{O},\,G}}}~
\textrm{with}~\omega=\frac{s+\sqrt{D_\mathcal{O}}}{2}
\end{equation*}
where $s$ is an integer satisfying $s^2\equiv D_\mathcal{O}\Mod{4p}$. 
\end{thmx}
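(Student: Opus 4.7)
The plan is to realise $f(\omega)$ and $f(\omega/p)$ as Galois conjugates under the two Artin symbols attached to primes of $\mathcal{O}$ above $p$, and to exploit that Frobenius at each such prime acts as the $p$-th power map modulo any prime of $K_{\mathcal{O},G}$ above it. The hypothesis $s^2 \equiv D_\mathcal{O}\pmod{4p}$ makes
\[
Q_1 = x^2 - sxy + \tfrac{s^2 - D_\mathcal{O}}{4}y^2,\quad Q_p = p x^2 - sxy + \tfrac{s^2 - D_\mathcal{O}}{4p}y^2,\quad Q_p' = p x^2 + sxy + \tfrac{s^2 - D_\mathcal{O}}{4p}y^2
\]
all lie in $\mathcal{Q}(D_\mathcal{O},N)$, with $\omega_{Q_1}=\omega$ and $\omega_{Q_p}=\omega/p$. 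A direct check shows $\mathfrak{p} := \mathbb{Z}\omega + \mathbb{Z}p$ is a proper $\mathcal{O}$-ideal of norm $p$, hence by (ii) one of the two primes above $p$; write $\bar\mathfrak{p}:=\mathbb{Z}\bar\omega+\mathbb{Z}p$ for the other, so $\mathfrak{p}\bar\mathfrak{p}=p\mathcal{O}$. Computing $\psi_{\Gamma_G,P_G(\mathcal{O},N)}$ from Theorem \ref{Gformclassgroup} yields $\psi([Q_p]) = [\bar\mathfrak{p}^{-1}]$ and $\psi([Q_p']) = [\mathfrak{p}^{-1}]$ in $\mathcal{C}_G(\mathcal{O},N)$. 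Hypothesis (iii) is exactly what forces $p\mathcal{O} \in P_G(\mathcal{O},N)$, whence $[\bar\mathfrak{p}^{-1}] = [\mathfrak{p}]$ and $[\mathfrak{p}^{-1}] = [\bar\mathfrak{p}]$; accordingly $\sigma_\mathfrak{p} := \phi_{\mathcal{O},G}([Q_p])$ and $\sigma_{\bar\mathfrak{p}} := \phi_{\mathcal{O},G}([Q_p'])$ are the Artin symbols of $\mathfrak{p}$ and $\bar\mathfrak{p}$, and they satisfy $\sigma_\mathfrak{p}\sigma_{\bar\mathfrak{p}} = \mathrm{id}$.

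Next I would unfold the formula of Corollary \ref{phiOG} at $[Q_p']$. Setting $k := (s - b_\mathcal{O})/2 \in \mathbb{Z}$, one has $\omega = \tau_\mathcal{O}+k$ and $T^k \in \Gamma_G$, so $f(\omega) = f(\tau_\mathcal{O})$, and
\[
\sigma_{\bar\mathfrak{p}}\bigl(f(\omega)\bigr) \;=\; f^{\gamma'}\bigl(-\bar\omega_{Q_p'}\bigr) \;=\; f^{\gamma'}(\omega/p),
\]
since $-\bar\omega_{Q_p'} = (s+\sqrt{D_\mathcal{O}})/2p = \omega/p$. Because $f$ has rational Fourier coefficients and the matrix $\gamma'$ of Corollary \ref{phiOG} acts on $\mathcal{F}_{\Gamma_G,k_G}$ through the cyclotomic Galois action of $\det\gamma'\bmod N$ on coefficients, the twist is trivial on $f$: $f^{\gamma'}=f$. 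Therefore $\sigma_{\bar\mathfrak{p}}(f(\omega)) = f(\omega/p)$, and symmetrically $\sigma_\mathfrak{p}(f(\omega/p)) = \sigma_{\bar\mathfrak{p}}^{-1}(f(\omega/p)) = f(\omega)$.

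Hypothesis (i) keeps $p$ coprime to $\ell_\mathcal{O}N$, so $\mathfrak{p},\bar\mathfrak{p}$ are unramified in $K_{\mathcal{O},G}/K$; integrality of $f$ over $\mathbb{Z}[j]$ together with $j(\omega),j(\omega/p)$ being singular moduli puts $f(\omega),f(\omega/p)$ in $\mathcal{O}_{K_{\mathcal{O},G}}$. For any prime $\mathfrak{P}'$ of $K_{\mathcal{O},G}$ above $\bar\mathfrak{p}$, the Frobenius relation $\sigma_{\bar\mathfrak{p}}(x)\equiv x^p\pmod{\mathfrak{P}'}$ combined with the key identity of the previous paragraph yields $f(\omega)^p \equiv f(\omega/p)\pmod{\mathfrak{P}'}$, so the first factor vanishes there; dually, for a prime $\mathfrak{P}$ above $\mathfrak{p}$, $\sigma_\mathfrak{p}(x)\equiv x^p\pmod{\mathfrak{P}}$ and $\sigma_\mathfrak{p}(f(\omega/p))=f(\omega)$ give $f(\omega/p)^p \equiv f(\omega)\pmod{\mathfrak{P}}$, so the second factor vanishes there. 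Since every prime of $K_{\mathcal{O},G}$ dividing $p$ lies above one of $\mathfrak{p},\bar\mathfrak{p}$, the product lies in every such prime, and unramifiedness upgrades this to membership in $p\mathcal{O}_{K_{\mathcal{O},G}}$.

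The main obstacle will be the identity $f^{\gamma'}=f$ in the second paragraph, i.e.\ showing that the specific Shimura action of $\gamma'$ on $\mathcal{F}_{\Gamma_G,k_G}$ reduces, for the subfield $\mathcal{F}_{\Gamma_G,\mathbb{Q}}$ of rational-coefficient functions, to the cyclotomic action of $\det\gamma'\bmod N$, which fixes $\mathbb{Q}$. This needs to be traced carefully through the canonical-model framework built up earlier in the paper; once this is in hand, the identification of $[Q_p'],[Q_p]$ with the two Artin symbols (which is where hypothesis (iii) enters essentially, via $[p\mathcal{O}]=1$) and the standard Frobenius manipulation give the congruence directly.
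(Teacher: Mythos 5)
Your proposal is correct, and its skeleton is the same as the paper's: realize $f(\omega/p)$ as the image of $f(\omega)=f(\tau_\mathcal{O})$ under the Artin symbol of one prime of $\mathcal{O}$ above $p$, apply the Frobenius congruence $x^{\sigma}\equiv x^{p}$ at primes of $K_{\mathcal{O},G}$ lying over $\mathfrak{q}$ and over $\overline{\mathfrak{q}}$ separately, and use unramifiedness of $p$ to pass from the product of primes to $p\mathcal{O}_{K_{\mathcal{O},G}}$. The genuine (if modest) difference is where the reciprocity and hypothesis (iii) enter: the paper computes at the level of $K_{\mathcal{O},\{1+N\mathbb{Z}\}}$, factoring $\mathfrak{p}=p\mathcal{O}\cdot[\omega_Q,1]$ and combining Lemma \ref{reciprocity} (for the scalar factor $p\mathcal{O}$) with Proposition \ref{CG}, so that (iii) is consumed in the matrix step showing $\left[\begin{smallmatrix}p&-\frac{s+b_\mathcal{O}}{2}\\0&p'\end{smallmatrix}\right]$ reduces into $\Gamma_G$ modulo $N$; you work directly at level $G$ via Corollary \ref{phiOG}, so (iii) is consumed once, through $[p\mathcal{O}]=1$ in $\mathcal{C}_G(\mathcal{O},N)$, which is what identifies $\phi_{\mathcal{O},G}([Q_p'])$ with $\left(\frac{K_{\mathcal{O},G}/K}{\overline{\mathfrak{q}}}\right)$ (this identification uses that $\phi_{\mathcal{O},G}$ equals the Artin map composed with the Corollary \ref{CK} isomorphism and $\psi_{\Gamma_G,\,P_G(\mathcal{O},\,N)}$, which follows from the construction of $\phi_{\mathcal{O},\,\{1+N\mathbb{Z}\}}$ and the restriction diagram in Figure \ref{Figure07}), and Lemma \ref{reciprocity} is then not needed at all. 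What you buy is a cleaner accounting of (iii); what the paper's route buys is that it only needs the level-$\{1+N\mathbb{Z}\}$ reciprocity (Proposition \ref{CG}) rather than Corollary \ref{phiOG}. Your flagged ``main obstacle'' $f^{\gamma'}=f$ is not a real obstacle and is settled exactly as in the paper's proof of Corollary \ref{phiOG}: write $\left[\begin{smallmatrix}1&-p'\frac{s+b_\mathcal{O}}{2}\\0&p'\end{smallmatrix}\right]=\left[\begin{smallmatrix}1&-\frac{s+b_\mathcal{O}}{2}\\0&1\end{smallmatrix}\right]\left[\begin{smallmatrix}1&0\\0&p'\end{smallmatrix}\right]$; the first factor lies in $\Gamma_1(N)\subseteq\Gamma_G$, and the second only twists Fourier coefficients, which are rational, so indeed the action on $\Gamma_G$-modular functions is the coefficient twist by $\det\gamma'\bmod N$. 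Two trivial corrections: the translation integer should be $k=(s+b_\mathcal{O})/2$, not $(s-b_\mathcal{O})/2$, to get $\omega=\tau_\mathcal{O}+k$; and membership of $f(\omega/p)$ in $K_{\mathcal{O},G}$ should be deduced from $f(\omega/p)=\sigma_{\overline{\mathfrak{p}}}(f(\omega))$ with $f(\omega)\in K_{\mathcal{O},G}$, integrality over $\mathbb{Z}[j]$ supplying only that both values are algebraic integers.
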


In Theorem \ref{app1}, $j$ stands for the elliptic modular function defined on $\mathbb{H}$.
The special case where $\mathcal{O}=\mathcal{O}_K$ and $G=\{1+N\mathbb{Z}\}$ is investigated in \cite{Yoon}. 

\section {Ideal class groups for orders}

In what follows, we let $K$ be an imaginary quadratic field 
of discriminant $d_K$ ($<0$), $\mathcal{O}_K$ be its ring of integers
and $\mathcal{O}$ be an order in $K$ of conductor $\ell_\mathcal{O}$.
For a nontrivial ideal $\mathfrak{a}$ of $\mathcal{O}$, 
we define its norm by $\mathrm{N}_\mathcal{O}(\mathfrak{a})
=|\mathcal{O}/\mathfrak{a}|$ ($<\infty$). Furthermore, we say that $\mathfrak{a}$ is prime to a positive integer $\ell$ if
$\mathfrak{a}+\ell\mathcal{O}=\mathcal{O}$. 

\begin{lemma}\label{basic}
Let $\mathfrak{a}$ be a nontrivial ideal of $\mathcal{O}$. 
\begin{enumerate}
\item[\textup{(i)}] For a positive integer $\ell$,
$\mathfrak{a}$ is prime to $\ell$ if and only if
$\mathrm{N}_\mathcal{O}(\mathfrak{a})$ is relatively prime to $\ell$. 
\item[\textup{(ii)}] If $\mathfrak{a}=\nu\mathcal{O}$ for some $\nu\in\mathcal{O}\setminus\{0\}$, then 
$\mathrm{N}_\mathcal{O}(\nu\mathcal{O})=\mathrm{N}_{K/\mathbb{Q}}(\nu)$. 
\end{enumerate}
\end{lemma}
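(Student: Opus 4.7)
The plan is to reduce each assertion to a standard fact: part (i) to the structure of multiplication-by-$\ell$ on a finite abelian group, and part (ii) to the behaviour of the determinant of a $\mathbb{Z}$-linear endomorphism under base change, exploiting the fact that $\mathcal{O}$ is a free $\mathbb{Z}$-module of rank two.

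For part (i), I would pass to the finite quotient ring $R=\mathcal{O}/\mathfrak{a}$, whose cardinality is $\mathrm{N}_\mathcal{O}(\mathfrak{a})$ by definition. The condition $\mathfrak{a}+\ell\mathcal{O}=\mathcal{O}$ is equivalent to $\ell R=R$, i.e.\ to the surjectivity of multiplication by $\ell$ on the finite additive group $R$. Since $R$ is finite, surjectivity is equivalent to bijectivity, and multiplication by the integer $\ell$ on a finite abelian group is bijective exactly when $\gcd(\ell,|R|)=1$. Substituting $|R|=\mathrm{N}_\mathcal{O}(\mathfrak{a})$ yields the stated equivalence.

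For part (ii), I would fix a $\mathbb{Z}$-basis of $\mathcal{O}$ so that multiplication by $\nu$ becomes a $\mathbb{Z}$-linear endomorphism of $\mathcal{O}$ represented by an integer $2\times2$ matrix $M_\nu$. Smith normal form (or the structure theorem for finitely generated abelian groups) gives $[\mathcal{O}:\nu\mathcal{O}]=|\det M_\nu|$. Tensoring with $\mathbb{Q}$, the same matrix represents multiplication by $\nu$ on the two-dimensional $\mathbb{Q}$-vector space $K$, whose determinant is $\mathrm{N}_{K/\mathbb{Q}}(\nu)=\nu\overline{\nu}$. Because $K$ is imaginary quadratic, this norm is strictly positive for $\nu\neq0$, so the absolute value may be dropped, giving $\mathrm{N}_\mathcal{O}(\nu\mathcal{O})=|\det M_\nu|=\mathrm{N}_{K/\mathbb{Q}}(\nu)$.

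Neither part poses a genuine obstacle: the only point requiring mild care, in (ii), is verifying that the integer determinant computed via a $\mathbb{Z}$-basis really coincides with the field-theoretic norm, which is a direct consequence of the invariance of the determinant under base change from $\mathbb{Z}$ to $\mathbb{Q}$.
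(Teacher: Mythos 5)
Your proof is correct: part (i) via multiplication by $\ell$ on the finite quotient $\mathcal{O}/\mathfrak{a}$ and part (ii) via the index--determinant identity and base change to $\mathbb{Q}$ (with the positivity of $\mathrm{N}_{K/\mathbb{Q}}(\nu)$ correctly invoked to drop the absolute value) are sound. The paper itself offers no argument but simply cites Lemmas 7.18(i) and 7.14(i) of Cox's book, and your reasoning is essentially the standard proof given there, so there is nothing to add.
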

\begin{proof}
\begin{enumerate}
\item[(i)] See \cite[Lemma 7.18 (i)]{Cox} except replacing $f$ by $\ell$. 
\item[(ii)] See \cite[Lemma 7.14 (i)]{Cox}.
\end{enumerate}
\end{proof}

Recall the definition of $\tau_\mathcal{O}$ given in (\ref{tauO}). 
If we write $\tau_K=\tau_{\mathcal{O}_K}$ for simplicity, then we see that
\begin{equation*}
\mathcal{O}_K=[\tau_K,\,1]=\mathbb{Z}\tau_K+\mathbb{Z}\quad\textrm{and}\quad
\mathcal{O}=[\ell_\mathcal{O}\tau_K,\,1]=\mathbb{Z}\ell_\mathcal{O}\tau_K+\mathbb{Z}.
\end{equation*}
Throughout this paper, we let $N$ be a positive integer and $G$ be a subgroup of $(\mathbb{Z}/N\mathbb{Z})^\times$. 

\begin{lemma}\label{nulemma}
If $\nu\in K\setminus\{0\}$, then we have
\begin{eqnarray}
&&\left\{\begin{array}{l}
\nu\in\mathcal{O},\\
\textrm{$\nu\mathcal{O}$ is prime to $\ell_\mathcal{O}N$,}\\
\textrm{$\nu\equiv a\Mod{N\mathcal{O}}$ for some integer $a$ such that
$a+N\mathbb{Z}\in G$}
\end{array}\right.\label{nu1}\\
&\Longleftrightarrow&
\left\{\begin{array}{l}
\nu\in\mathcal{O}_K,\\
\textrm{$\nu\mathcal{O}_K$ is prime to $\ell_\mathcal{O}N$,}\\
\textrm{$\nu\equiv b\Mod{\ell_\mathcal{O}N\mathcal{O}_K}$ for some integer $b$ such that
$b+N\mathbb{Z}\in G$}
\end{array}\right.\label{nu2}
\end{eqnarray}
\end{lemma}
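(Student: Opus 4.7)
The plan is to exploit two structural ingredients. First, for an imaginary quadratic order one has the standard description $\mathcal{O} = \mathbb{Z} + \ell_\mathcal{O}\mathcal{O}_K = [\ell_\mathcal{O}\tau_K,\,1]$, which simultaneously furnishes the conductor inclusion $\ell_\mathcal{O}\mathcal{O}_K \subseteq \mathcal{O}$ and an explicit $\mathbb{Z}$-basis of $\mathcal{O}$. Second, by Lemma \ref{basic} the two primality hypotheses ``$\nu\mathcal{O}$ is prime to $\ell_\mathcal{O}N$'' and ``$\nu\mathcal{O}_K$ is prime to $\ell_\mathcal{O}N$'' both reduce to the single arithmetic condition $\gcd(\mathrm{N}_{K/\mathbb{Q}}(\nu),\,\ell_\mathcal{O}N)=1$, so they translate into one another automatically once $\nu$ is known to lie in the relevant ring. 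Thus the only real content of the lemma is transferring the congruence condition between the moduli $N\mathcal{O}$ and $\ell_\mathcal{O}N\mathcal{O}_K$.

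For the implication $(\ref{nu1}) \Rightarrow (\ref{nu2})$, I would write $\nu = a + N\alpha$ with $\alpha \in \mathcal{O}$, and then use the basis $\mathcal{O} = [\ell_\mathcal{O}\tau_K,\,1]$ to expand $\alpha = x + y\ell_\mathcal{O}\tau_K$ with $x, y \in \mathbb{Z}$. Setting $b := a + Nx$ then gives $\nu - b = Ny\,\ell_\mathcal{O}\tau_K \in \ell_\mathcal{O}N\mathcal{O}_K$, while $b \equiv a \pmod{N}$ ensures $b + N\mathbb{Z} \in G$. The inclusion $\nu \in \mathcal{O}_K$ is automatic from $\mathcal{O} \subseteq \mathcal{O}_K$.

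For the reverse implication $(\ref{nu2}) \Rightarrow (\ref{nu1})$, the conductor inclusion gives $\ell_\mathcal{O}N\mathcal{O}_K = N(\ell_\mathcal{O}\mathcal{O}_K) \subseteq N\mathcal{O}$. Hence the hypothesis $\nu \equiv b \pmod{\ell_\mathcal{O}N\mathcal{O}_K}$ already yields $\nu - b \in N\mathcal{O}$; since $b \in \mathbb{Z} \subseteq \mathcal{O}$, this simultaneously produces $\nu \in \mathcal{O}$ and the congruence $\nu \equiv b \pmod{N\mathcal{O}}$, so one may simply take $a = b$.

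No deep obstacle is expected: the entire argument rests on the identity $\mathcal{O} = \mathbb{Z} + \ell_\mathcal{O}\mathcal{O}_K$, applied as an integral basis in one direction and as the inclusion $\ell_\mathcal{O}\mathcal{O}_K \subseteq \mathcal{O}$ in the other, together with Lemma \ref{basic} to dispose uniformly of the primality clause. The subtlest point is bookkeeping: making sure that the integer $b$ produced in the forward direction still represents the chosen class of $G$, which is immediate since $b \equiv a \pmod{N}$.
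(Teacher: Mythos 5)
Your proposal is correct and follows essentially the same route as the paper: the congruence is transferred between the moduli $N\mathcal{O}$ and $\ell_\mathcal{O}N\mathcal{O}_K$ using the basis $\mathcal{O}=[\ell_\mathcal{O}\tau_K,\,1]$ in the forward direction and the inclusion $\ell_\mathcal{O}N\mathcal{O}_K\subseteq N\mathcal{O}$ in the reverse, while the two primality conditions are identified via Lemma \ref{basic} through the common value $\mathrm{N}_{K/\mathbb{Q}}(\nu)$. No gaps.
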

\begin{proof}
Assume that (\ref{nu1}) holds. Then $\mathrm{N}_\mathcal{O}(\nu\mathcal{O})$ is relatively prime to 
$\ell_\mathcal{O}N$ by Lemma \ref{basic} (i). Moreover, since
$\mathrm{N}_\mathcal{O}(\nu\mathcal{O})=\mathrm{N}_{K/\mathbb{Q}}(\nu)=\mathrm{N}_{\mathcal{O}_K}(\nu\mathcal{O}_K)$
by Lemma \ref{basic} (ii), $\nu\mathcal{O}_K$ is prime to $\ell_\mathcal{O}N$ again by Lemma \ref{basic} (i).
We get by the fact $\nu-a\in N\mathcal{O}=[N\ell_\mathcal{O}\tau_K,\,N]$ that
\begin{equation*}
\nu=rN\ell_\mathcal{O}\tau_K+sN+a\quad\textrm{for some}~r,\,s\in\mathbb{Z}. 
\end{equation*}
If we let $b=sN+a$, then we obtain that
\begin{equation*}
\nu\equiv b\Mod{\ell_\mathcal{O}N\mathcal{O}_K}\quad\textrm{and}\quad
b+N\mathbb{Z}=a+N\mathbb{Z}\in G. 
\end{equation*}
\par
Conversely, assume that (\ref{nu2}) is satisfied. We deduce by
\begin{equation*}
\nu-b\in\ell_\mathcal{O}N\mathcal{O}_K\subseteq N\mathcal{O}\quad\textrm{and}\quad b\in\mathbb{Z}\subset\mathcal{O}
\end{equation*}
that
\begin{equation*}
\nu\in\mathcal{O}\quad\textrm{and}\quad \nu\equiv b\Mod{N\mathcal{O}}~
\textrm{with}~b+N\mathbb{Z}\in G. 
\end{equation*}
Then it follows from Lemma \ref{basic} (i) that $\mathrm{N}_{\mathcal{O}_K}(\nu\mathcal{O}_K)$ is relatively prime to $\ell_\mathcal{O}N$.
Since $\mathrm{N}_\mathcal{O}(\nu\mathcal{O})=\mathrm{N}_{K/\mathbb{Q}}(\nu)=
\mathrm{N}_{\mathcal{O}_K}(\nu\mathcal{O}_K)$
by Lemma \ref{basic} (ii),
we achieve again by Lemma \ref{basic} (i) that $\nu\mathcal{O}$ is prime to $\ell_\mathcal{O}N$. 
\end{proof}

For positive integers $\ell$ and $m$, we denote by
\begin{eqnarray*}
\mathcal{M}(\mathcal{O},\,\ell)&=&\textrm{the monoid of nontrivial proper $\mathcal{O}$-ideals prime to $\ell$},\\
I(\mathcal{O},\,\ell)&=&\textrm{the subgroup of $I(\mathcal{O})$ generated by the elements of $\mathcal{M}(\mathcal{O},\,\ell)$},\\
P(\mathcal{O},\,\ell)&=&\textrm{the subgroup of $I(\mathcal{O},\,\ell)$ generated by $\nu\mathcal{O}$
for $\nu\in\mathcal{O}\setminus\{0\}$ such that $\nu\mathcal{O}\in\mathcal{M}(\mathcal{O},\,\ell)$},\\
P_G(\mathcal{O},\,\ell,\,m)&=&\textrm{the subgroup of $I(\mathcal{O},\,\ell)$ generated by $\nu\mathcal{O}$
for $\nu\in\mathcal{O}\setminus\{0\}$ such that $\nu\mathcal{O}\in\mathcal{M}(\mathcal{O},\,\ell)$}\\
&&\textrm{and}~\nu\equiv a\Mod{m\mathcal{O}}~\textrm{for some}~a\in\mathbb{Z}
~\textrm{satisfying}~a+N\mathbb{Z}\in G.
\end{eqnarray*} 

Here, we see that $P_G(\mathcal{O},\,N,\,N)=P_G(\mathcal{O},\,N)$. 

\begin{lemma}\label{isomorphism}
The mapping
\begin{equation*}
\mathcal{M}(\mathcal{O},\,\ell_\mathcal{O}N)\rightarrow\mathcal{M}(\mathcal{O}_K,\,\ell_\mathcal{O}N),
\quad\mathfrak{a}\mapsto\mathfrak{a}\mathcal{O}_K
\end{equation*}
is well defined, and uniquely derives an isomorphism
$I(\mathcal{O},\,\ell_\mathcal{O}N)\stackrel{\sim}{\rightarrow}I(\mathcal{O}_K,\,\ell_\mathcal{O}N)$.
\end{lemma}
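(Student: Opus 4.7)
The strategy is to reduce to Cox's bijection between proper $\mathcal{O}$-ideals prime to the conductor $\ell_\mathcal{O}$ and $\mathcal{O}_K$-ideals prime to $\ell_\mathcal{O}$ (\cite[Proposition 7.20]{Cox}), restrict that bijection to the submonoids of ideals prime to $\ell_\mathcal{O}N$, and then promote the monoid isomorphism to a group isomorphism.

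First, I would check that the extension map is well defined on $\mathcal{M}(\mathcal{O},\,\ell_\mathcal{O}N)$. Given $\mathfrak{a}$ in this monoid, the equality $\mathfrak{a}+\ell_\mathcal{O}N\mathcal{O}=\mathcal{O}$ multiplied by $\mathcal{O}_K$ yields $\mathfrak{a}\mathcal{O}_K+\ell_\mathcal{O}N\mathcal{O}_K=\mathcal{O}_K$, so $\mathfrak{a}\mathcal{O}_K$ is prime to $\ell_\mathcal{O}N$; and it is nontrivial since $\mathfrak{a}\subseteq\mathfrak{a}\mathcal{O}_K\subsetneq\mathcal{O}_K$. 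Multiplicativity $(\mathfrak{a}\mathfrak{b})\mathcal{O}_K=(\mathfrak{a}\mathcal{O}_K)(\mathfrak{b}\mathcal{O}_K)$ is immediate from the definition of the ideal product.

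Next I would establish that the monoid map is a bijection. By \cite[Proposition 7.20]{Cox}, extension of ideals induces a bijection between proper $\mathcal{O}$-ideals prime to $\ell_\mathcal{O}$ and $\mathcal{O}_K$-ideals prime to $\ell_\mathcal{O}$, with inverse contraction $\mathfrak{A}\mapsto\mathfrak{A}\cap\mathcal{O}$, and this bijection preserves the norm. Hence to see that it restricts to a bijection on the sub-monoids prime to $\ell_\mathcal{O}N$, I would invoke Lemma \ref{basic}(i): being prime to $\ell_\mathcal{O}N$ is equivalent to having norm coprime to $\ell_\mathcal{O}N$ (on either side), and the norm is preserved by the bijection. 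Thus both the extension $\mathfrak{a}\mapsto\mathfrak{a}\mathcal{O}_K$ and the contraction $\mathfrak{A}\mapsto\mathfrak{A}\cap\mathcal{O}$ restrict to mutually inverse, multiplicative bijections between $\mathcal{M}(\mathcal{O},\,\ell_\mathcal{O}N)$ and $\mathcal{M}(\mathcal{O}_K,\,\ell_\mathcal{O}N)$.

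Finally, to pass from monoids to groups, I would use that every element of $I(\mathcal{O},\,\ell_\mathcal{O}N)$ (resp.\ $I(\mathcal{O}_K,\,\ell_\mathcal{O}N)$) can be written as $\mathfrak{a}\mathfrak{b}^{-1}$ with $\mathfrak{a},\mathfrak{b}\in\mathcal{M}(\mathcal{O},\,\ell_\mathcal{O}N)$ (resp.\ $\mathcal{M}(\mathcal{O}_K,\,\ell_\mathcal{O}N)$). Since proper $\mathcal{O}$-ideals prime to $\ell_\mathcal{O}$ are invertible (\cite[Proposition 7.19]{Cox}), these monoids are cancellative and embed into their groups of fractions, which coincide with $I(\mathcal{O},\,\ell_\mathcal{O}N)$ and $I(\mathcal{O}_K,\,\ell_\mathcal{O}N)$. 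The universal property of group completion then yields a unique extension of the monoid isomorphism to a group isomorphism $I(\mathcal{O},\,\ell_\mathcal{O}N)\stackrel{\sim}{\rightarrow}I(\mathcal{O}_K,\,\ell_\mathcal{O}N)$. The main technical obstacle is packaged into the bijection at the monoid level — specifically the invertibility and norm-preservation used above; everything else is formal.
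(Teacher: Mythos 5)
Your argument is correct. The paper offers no argument of its own for this lemma---it simply cites \cite[Lemma 2.6]{J-K-S-Y23}---so your write-up is a self-contained alternative: you reduce to Cox's conductor-level correspondence (\cite[Propositions 7.19 and 7.20]{Cox}), cut it down to the sub-monoids of ideals prime to $\ell_\mathcal{O}N$ by combining the norm-preservation of that correspondence with Lemma \ref{basic} (i) (applied to both $\mathcal{O}$ and $\mathcal{O}_K$), and then pass from monoids to groups via the group of fractions; since every element of $I(\mathcal{O},\,\ell_\mathcal{O}N)$, respectively $I(\mathcal{O}_K,\,\ell_\mathcal{O}N)$, is of the form $\mathfrak{a}\mathfrak{b}^{-1}$ with $\mathfrak{a},\,\mathfrak{b}$ in the corresponding monoid, the universal property does give a unique group isomorphism extending the monoid bijection, and the explicit formula $\mathfrak{a}\mathfrak{b}^{-1}\mapsto(\mathfrak{a}\mathcal{O}_K)(\mathfrak{b}\mathcal{O}_K)^{-1}$ matches how the isomorphism is used later (e.g.\ in Proposition \ref{lNK}). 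This buys a proof that rests only on the standard conductor case plus formal algebra, at the cost of quoting Cox rather than re-deriving the bijection. One small point of phrasing: your claim that $\mathfrak{a}\mathcal{O}_K\subsetneq\mathcal{O}_K$ is not justified by the inclusion $\mathfrak{a}\subseteq\mathfrak{a}\mathcal{O}_K$ alone (the strict inclusion is exactly what needs proof); it does follow at once from the norm preservation in \cite[Proposition 7.20]{Cox}, since $\mathrm{N}_{\mathcal{O}_K}(\mathfrak{a}\mathcal{O}_K)=\mathrm{N}_\mathcal{O}(\mathfrak{a})>1$, or from the contraction identity $\mathfrak{a}\mathcal{O}_K\cap\mathcal{O}=\mathfrak{a}\neq\mathcal{O}$, both of which you invoke anyway in the next step, so this is a cosmetic repair rather than a gap.
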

\begin{proof}
See \cite[Lemma 2.6]{J-K-S-Y23}. 
\end{proof}

\begin{proposition}\label{lNK}
We have the natural isomorphism
\begin{equation*}
I(\mathcal{O},\,\ell_\mathcal{O}N)/
P_G(\mathcal{O},\,\ell_\mathcal{O}N,\,N)\stackrel{\sim}{\rightarrow}I(\mathcal{O}_K,\,\ell_\mathcal{O}N)/
P_G(\mathcal{O}_K,\,\ell_\mathcal{O}N,\,\ell_\mathcal{O}N). 
\end{equation*}
\end{proposition}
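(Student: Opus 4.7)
The plan is to reduce the proposition to a direct identification of the subgroups of numerators, by combining Lemma \ref{isomorphism} with Lemma \ref{nulemma}. First I would invoke Lemma \ref{isomorphism} to obtain the isomorphism
\begin{equation*}
\phi~:~I(\mathcal{O},\,\ell_\mathcal{O}N)\stackrel{\sim}{\to}I(\mathcal{O}_K,\,\ell_\mathcal{O}N),\quad\mathfrak{a}\mapsto\mathfrak{a}\mathcal{O}_K.
\end{equation*}
Composing $\phi$ with the quotient map onto $I(\mathcal{O}_K,\,\ell_\mathcal{O}N)/P_G(\mathcal{O}_K,\,\ell_\mathcal{O}N,\,\ell_\mathcal{O}N)$ yields a surjective homomorphism, so by the first isomorphism theorem it suffices to show that its kernel equals $P_G(\mathcal{O},\,\ell_\mathcal{O}N,\,N)$, equivalently that
\begin{equation*}
\phi\bigl(P_G(\mathcal{O},\,\ell_\mathcal{O}N,\,N)\bigr)=P_G(\mathcal{O}_K,\,\ell_\mathcal{O}N,\,\ell_\mathcal{O}N).
\end{equation*}

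Since both sides are subgroups and $\phi$ is a homomorphism, it is enough to check this equality on generators, and here Lemma \ref{nulemma} does essentially all of the work. For the forward inclusion, take a generator $\nu\mathcal{O}$ of $P_G(\mathcal{O},\,\ell_\mathcal{O}N,\,N)$, so $\nu\in\mathcal{O}\setminus\{0\}$, $\nu\mathcal{O}\in\mathcal{M}(\mathcal{O},\,\ell_\mathcal{O}N)$, and $\nu\equiv a\Mod{N\mathcal{O}}$ for some $a$ with $a+N\mathbb{Z}\in G$; the implication (\ref{nu1})$\Rightarrow$(\ref{nu2}) of Lemma \ref{nulemma} then gives $\nu\mathcal{O}_K\in\mathcal{M}(\mathcal{O}_K,\,\ell_\mathcal{O}N)$ and $\nu\equiv b\Mod{\ell_\mathcal{O}N\mathcal{O}_K}$ with $b+N\mathbb{Z}\in G$, so $\phi(\nu\mathcal{O})=\nu\mathcal{O}_K$ generates an element of $P_G(\mathcal{O}_K,\,\ell_\mathcal{O}N,\,\ell_\mathcal{O}N)$. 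For the reverse inclusion, take a generator $\mu\mathcal{O}_K$ of $P_G(\mathcal{O}_K,\,\ell_\mathcal{O}N,\,\ell_\mathcal{O}N)$; the implication (\ref{nu2})$\Rightarrow$(\ref{nu1}) of Lemma \ref{nulemma} produces the same $\mu$ sitting inside $\mathcal{O}$, with $\mu\mathcal{O}$ prime to $\ell_\mathcal{O}N$ and the required congruence modulo $N\mathcal{O}$. Hence $\mu\mathcal{O}$ lies in $P_G(\mathcal{O},\,\ell_\mathcal{O}N,\,N)$ and satisfies $\phi(\mu\mathcal{O})=\mu\mathcal{O}_K$, completing the equality.

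There is no serious obstacle. The only thing to be careful about is that one should not confuse the two congruence moduli, $N\mathcal{O}$ on the $\mathcal{O}$-side versus $\ell_\mathcal{O}N\mathcal{O}_K$ on the $\mathcal{O}_K$-side: the passage between them in both directions is exactly the content of Lemma \ref{nulemma}, using the inclusion $\ell_\mathcal{O}N\mathcal{O}_K\subseteq N\mathcal{O}$ and the normalization that each generator be congruent to a \emph{rational integer}. Once the translation between generators is settled, the rest is formal, and the induced map on quotients is the desired natural isomorphism.
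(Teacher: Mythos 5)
Your proposal is correct and follows essentially the same route as the paper: the paper also takes the isomorphism $I(\mathcal{O},\,\ell_\mathcal{O}N)\stackrel{\sim}{\rightarrow}I(\mathcal{O}_K,\,\ell_\mathcal{O}N)$ from Lemma \ref{isomorphism} and uses Lemma \ref{nulemma} to identify the images of the subgroups $P_G(\mathcal{O},\,\ell_\mathcal{O}N,\,N)$ and $P_G(\mathcal{O}_K,\,\ell_\mathcal{O}N,\,\ell_\mathcal{O}N)$, then passes to quotients. Your generator-by-generator verification simply makes explicit what the paper leaves implicit.
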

\begin{proof}
Let $\psi:I(\mathcal{O},\,\ell_\mathcal{O}N)\stackrel{\sim}{\rightarrow}I(\mathcal{O}_K,\,\ell_\mathcal{O}N)$ be the isomorphism
stated in Lemma \ref{isomorphism}. By Lemma \ref{nulemma}, we obtain
$\psi(P_G(\mathcal{O},\,\ell_\mathcal{O}N,\,N))=P_G(\mathcal{O}_K,\,\ell_\mathcal{O}N,\,\ell_\mathcal{O}N)$. 
Thus we establish the isomorphism
\begin{eqnarray*}
I(\mathcal{O},\,\ell_\mathcal{O}N)/
P_G(\mathcal{O},\,\ell_\mathcal{O}N,\,N)&\stackrel{\sim}{\rightarrow}&I(\mathcal{O}_K,\,\ell_\mathcal{O}N)/
P_G(\mathcal{O}_K,\,\ell_\mathcal{O}N,\,\ell_\mathcal{O}N)\\
\mathrm{[}\mathfrak{a}\mathfrak{b}^{-1}] & \mapsto & [(\mathfrak{a}\mathcal{O}_K)(\mathfrak{b}\mathcal{O}_K)^{-1}]
\end{eqnarray*}
where $\mathfrak{a},\,\mathfrak{b}\in\mathcal{M}(\mathcal{O},\,\ell_\mathcal{O}N)$. 
\end{proof}

\begin{lemma}\label{Ptrivial}
The inclusion $P(\mathcal{O},\,\ell_\mathcal{O}N)\hookrightarrow P(\mathcal{O},\,N)$ induces
an isomorphism
\begin{equation*}
P(\mathcal{O},\,\ell_\mathcal{O}N)/P_{\{1+N\mathbb{Z}\}}(\mathcal{O},\,\ell_\mathcal{O}N,\,N)
\stackrel{\sim}{\rightarrow}
P(\mathcal{O},\,N)/P_{\{1+N\mathbb{Z}\}}(\mathcal{O},\,N).
\end{equation*}
\end{lemma}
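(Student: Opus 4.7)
The plan is to derive both the surjectivity and the injectivity of the inclusion-induced map from a single strong-approximation statement for the order $\mathcal{O}$. The key claim I would isolate is the following: for every $\nu \in \mathcal{O}\setminus\{0\}$ with $\nu\mathcal{O}$ prime to $N$, there exists $\lambda \in \mathcal{O}$ such that $\lambda \equiv \nu \pmod{N\mathcal{O}}$ and $\lambda\mathcal{O}$ is prime to $\ell_\mathcal{O}N$. To establish this, I would write $\lambda = \nu + N\mu$ and determine $\mu \in \mathcal{O}$ by the Chinese remainder theorem in $\mathcal{O}$ so that $\lambda \notin \mathfrak{P}$ for every prime $\mathfrak{P}$ of $\mathcal{O}_K$ dividing $\ell_\mathcal{O}N$. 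For $\mathfrak{P}$ above $p \mid N$ the requirement is automatic since $\lambda \equiv \nu \pmod{\mathfrak{P}}$ and $\nu\mathcal{O}$ is prime to $N$ by hypothesis. For $\mathfrak{P}$ above $p \mid \ell_\mathcal{O}$ with $p \nmid N$, a short computation gives $\mathcal{O}/p\mathcal{O} \cong \mathbb{F}_p[x]/(x^2)$ (because $p$ kills the coefficient $\ell_\mathcal{O}$ in the minimal polynomial of $\ell_\mathcal{O}\tau_K$), so the unique prime $\mathfrak{p}$ of $\mathcal{O}$ above $p$ has $\mathcal{O}/\mathfrak{p}\cong\mathbb{F}_p$, and every element of $\mathcal{O}$ reduces into this $\mathbb{F}_p$-subfield of $\mathcal{O}_K/\mathfrak{P}$. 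The constraint becomes $\mu \not\equiv -\nu/N \pmod{\mathfrak{p}}$, a single forbidden residue in a field of cardinality at least $2$, hence satisfiable simultaneously at all such $\mathfrak{p}$ by CRT.

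For surjectivity, any element of $P(\mathcal{O},N)$ is of the form $\nu_1\mathcal{O}(\nu_2\mathcal{O})^{-1}$ with each $\nu_i\mathcal{O}$ prime to $N$. I would apply the key claim to each $\nu_i$ to obtain $\lambda_i \in \mathcal{O}$ with $\lambda_i \equiv \nu_i \pmod{N\mathcal{O}}$ and $\lambda_i\mathcal{O}$ prime to $\ell_\mathcal{O}N$, so that $\lambda_1\mathcal{O}(\lambda_2\mathcal{O})^{-1} \in P(\mathcal{O},\ell_\mathcal{O}N)$ is the candidate preimage. To verify that it represents the same class in the target, I would pick a common lift $\gamma_i \in \mathcal{O}$ of $\bar{\nu}_i^{-1}$ in $(\mathcal{O}/N\mathcal{O})^\times$; then $\nu_i\gamma_i, \lambda_i\gamma_i \equiv 1 \pmod{N\mathcal{O}}$ and $\nu_i\mathcal{O}(\lambda_i\mathcal{O})^{-1} = (\nu_i\gamma_i)\mathcal{O}((\lambda_i\gamma_i)\mathcal{O})^{-1} \in P_{\{1+N\mathbb{Z}\}}(\mathcal{O},N)$.

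For injectivity, suppose $\mathfrak{a} \in P(\mathcal{O},\ell_\mathcal{O}N) \cap P_{\{1+N\mathbb{Z}\}}(\mathcal{O},N)$, and write $\mathfrak{a} = \alpha\mathcal{O}(\beta\mathcal{O})^{-1}$ with $\alpha,\beta \in \mathcal{O}$ and both $\alpha\mathcal{O}, \beta\mathcal{O}$ prime to $\ell_\mathcal{O}N$. Comparing the two descriptions of $\mathfrak{a}$ as a fractional ideal yields $\bar\alpha = \bar u\bar\beta$ in $(\mathcal{O}/N\mathcal{O})^\times$ for some unit $u \in \mathcal{O}^\times$; absorbing $u$ into $\alpha$, I may assume $\alpha \equiv \beta \pmod{N\mathcal{O}}$. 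I then take any lift $\gamma_0 \in \mathcal{O}$ of $\bar\alpha^{-1}$ and refine it via the key claim to $\gamma \in \mathcal{O}$ with $\gamma \equiv \gamma_0 \pmod{N\mathcal{O}}$ and $\gamma\mathcal{O}$ prime to $\ell_\mathcal{O}N$. Then $\alpha\gamma,\beta\gamma \equiv 1 \pmod{N\mathcal{O}}$ and $(\alpha\gamma)\mathcal{O},(\beta\gamma)\mathcal{O}$ are prime to $\ell_\mathcal{O}N$, so $\mathfrak{a} = (\alpha\gamma)\mathcal{O}((\beta\gamma)\mathcal{O})^{-1} \in P_{\{1+N\mathbb{Z}\}}(\mathcal{O},\ell_\mathcal{O}N,N)$, as desired.

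The main obstacle is the key claim itself. Away from the conductor $\ell_\mathcal{O}$, $\mathcal{O}$ and $\mathcal{O}_K$ agree locally and classical approximation in Dedekind domains suffices; at the singular primes above $p \mid \ell_\mathcal{O}$ the order is non-maximal, and the crucial observation is that both the image of $\mathcal{O}$ in $\mathcal{O}_K/\mathfrak{P}$ and the residue $-\nu/N$ lie in the prime subfield $\mathbb{F}_p$, which lets CRT in $\mathcal{O}$ furnish a valid $\mu$ despite the failure of $\mathcal{O}$ to be Dedekind at these primes.
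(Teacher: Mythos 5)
Your proposal is correct, but note that the paper never proves this lemma itself: its ``proof'' is only a citation of Cox (Lemma 15.17 and Exercise 15.10), so what you have written is a self-contained argument replacing that citation rather than a variant of an argument appearing in the paper. Your route is essentially the standard mechanism underlying the cited facts: everything reduces to the approximation claim that any $\nu\in\mathcal{O}\setminus\{0\}$ with $\nu\mathcal{O}$ prime to $N$ admits $\lambda\equiv\nu\pmod{N\mathcal{O}}$ with $\lambda\mathcal{O}$ prime to $\ell_\mathcal{O}N$, and this you prove by CRT in $\mathcal{O}$. The only genuinely order-specific point is at the primes $p\mid\ell_\mathcal{O}$ with $p\nmid N$, and your observation that $\mathcal{O}/p\mathcal{O}\cong\mathbb{F}_p[x]/(x^2)$ --- so that there is a unique maximal ideal $\mathfrak{p}$ of $\mathcal{O}$ over $p$, with $\mathcal{O}/\mathfrak{p}\cong\mathbb{F}_p$, and $\lambda\in\mathfrak{P}\Leftrightarrow\lambda\in\mathfrak{p}$ for every prime $\mathfrak{P}$ of $\mathcal{O}_K$ over $p$ --- is exactly what makes the single-forbidden-residue count work; the equivalence of ``$\lambda\mathcal{O}$ prime to $\ell_\mathcal{O}N$'' with ``$\lambda$ avoids all $\mathfrak{P}\mid\ell_\mathcal{O}N$ in $\mathcal{O}_K$'' is justified by the norm characterization in the paper's Lemma 2.1. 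Your deductions of surjectivity (multiplying $\nu_i$ and its approximation $\lambda_i$ by a common lift $\gamma_i$ of $\bar{\nu}_i^{-1}$ in $(\mathcal{O}/N\mathcal{O})^\times$) and of injectivity (using that two generators of the same fractional ideal differ by a unit $u\in\mathcal{O}^\times$ to normalize $\alpha\equiv\beta\pmod{N\mathcal{O}}$, then multiplying by an approximated inverse $\gamma$) are both sound; the implicit facts you rely on --- $\bar{\nu}_i,\bar{\alpha}\in(\mathcal{O}/N\mathcal{O})^\times$ because the corresponding ideals are prime to $N$, and products of ideals prime to $\ell_\mathcal{O}N$ remaining prime to $\ell_\mathcal{O}N$ --- again follow from Lemma 2.1. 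In short, your argument buys self-containedness where the paper outsources the proof, and it is the same CRT-at-the-conductor idea one finds behind the Cox references.
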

\begin{proof}
See \cite[Lemma 15.17 and Exercise 15.10]{Cox}. 
\end{proof}

\begin{lemma}\label{Itrivial}
The inclusion $I(\mathcal{O},\,\ell_\mathcal{O}N)\hookrightarrow I(\mathcal{O},\,N)$ gives an 
isomorphism
\begin{equation*}
I(\mathcal{O},\,\ell_\mathcal{O}N)/P_{\{1+N\mathbb{Z}\}}(\mathcal{O},\,\ell_\mathcal{O}N,\,N)
\stackrel{\sim}{\rightarrow}
I(\mathcal{O},\,N)/P_{\{1+N\mathbb{Z}\}}(\mathcal{O},\,N).
\end{equation*}
\end{lemma}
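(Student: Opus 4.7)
The plan is to reduce the lemma to Lemma \ref{Ptrivial} via a five-lemma argument applied to the following commutative diagram of short exact sequences, whose vertical arrows are all induced by the inclusion $I(\mathcal{O},\,\ell_\mathcal{O}N)\hookrightarrow I(\mathcal{O},\,N)$:
\[
\begin{CD}
1 @>>> P(\mathcal{O},\,\ell_\mathcal{O}N)/P_{\{1+N\mathbb{Z}\}}(\mathcal{O},\,\ell_\mathcal{O}N,\,N) @>>> I(\mathcal{O},\,\ell_\mathcal{O}N)/P_{\{1+N\mathbb{Z}\}}(\mathcal{O},\,\ell_\mathcal{O}N,\,N) @>>> I(\mathcal{O},\,\ell_\mathcal{O}N)/P(\mathcal{O},\,\ell_\mathcal{O}N) @>>> 1 \\
@. @VVV @VVV @VVV @. \\
1 @>>> P(\mathcal{O},\,N)/P_{\{1+N\mathbb{Z}\}}(\mathcal{O},\,N) @>>> I(\mathcal{O},\,N)/P_{\{1+N\mathbb{Z}\}}(\mathcal{O},\,N) @>>> I(\mathcal{O},\,N)/P(\mathcal{O},\,N) @>>> 1
\end{CD}
\]
First I would verify that every generator of $P_{\{1+N\mathbb{Z}\}}(\mathcal{O},\,\ell_\mathcal{O}N,\,N)$ already lies in $P_{\{1+N\mathbb{Z}\}}(\mathcal{O},\,N)$, that $P(\mathcal{O},\,\ell_\mathcal{O}N)\subseteq P(\mathcal{O},\,N)$, and that each row is exact by definition of the quotients; commutativity of the squares is then automatic. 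The left vertical arrow is an isomorphism by Lemma \ref{Ptrivial}.

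The main step is to show that the right vertical is an isomorphism, and the cleanest route is to prove the auxiliary fact that, for every positive integer $M$, the composition $I(\mathcal{O},\,M)\hookrightarrow I(\mathcal{O})\twoheadrightarrow\mathcal{C}(\mathcal{O})$ induces an isomorphism $I(\mathcal{O},\,M)/P(\mathcal{O},\,M)\stackrel{\sim}{\to}\mathcal{C}(\mathcal{O})$. Surjectivity is the well-known statement that every proper $\mathcal{O}$-ideal class contains a representative in $\mathcal{M}(\mathcal{O},\,M)$, essentially \cite[Lemma 7.18]{Cox}. For the kernel, the inclusion $P(\mathcal{O},\,M)\subseteq I(\mathcal{O},\,M)\cap P(\mathcal{O})$ is obvious; conversely, I would take $\mathfrak{c}=\mu\mathcal{O}\in I(\mathcal{O},\,M)\cap P(\mathcal{O})$, write $\mathfrak{c}=\mathfrak{a}\mathfrak{b}^{-1}$ with $\mathfrak{a},\,\mathfrak{b}\in\mathcal{M}(\mathcal{O},\,M)$, and (by the surjectivity just shown, applied to the class of $\mathfrak{b}^{-1}$) choose $\mathfrak{b}'\in\mathcal{M}(\mathcal{O},\,M)$ such that $\mathfrak{b}\mathfrak{b}'=\beta\mathcal{O}$ is principal. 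Then $\mathfrak{a}\mathfrak{b}'=\mu\beta\mathcal{O}$ is integral and principal, so $\mathfrak{c}=(\mu\beta\mathcal{O})(\beta\mathcal{O})^{-1}\in P(\mathcal{O},\,M)$. Applying the auxiliary fact with $M=\ell_\mathcal{O}N$ and $M=N$ identifies both sides of the right vertical with $\mathcal{C}(\mathcal{O})$ through maps induced by inclusion, so the right vertical is an isomorphism.

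The five-lemma then forces the middle vertical to be an isomorphism, which is precisely the content of the lemma. The main obstacle I anticipate is the kernel computation in the auxiliary fact: because $\mathcal{O}$ need not be Dedekind, one has to rely on the invertibility of proper ideals and on the approximation argument producing $\mathfrak{b}'$, rather than on unique prime factorization.
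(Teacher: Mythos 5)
Your argument is correct, and it is a genuinely self-contained proof, whereas the paper disposes of this lemma by citing \cite[Proposition 2.13]{J-K-S-Y23} (just as it disposes of the principal-ideal analogue, Lemma \ref{Ptrivial}, by citing Cox). Your route --- padding the two quotients into short exact sequences with kernels $P(\mathcal{O},\,\ell_\mathcal{O}N)/P_{\{1+N\mathbb{Z}\}}(\mathcal{O},\,\ell_\mathcal{O}N,\,N)$ and $P(\mathcal{O},\,N)/P_{\{1+N\mathbb{Z}\}}(\mathcal{O},\,N)$, quoting Lemma \ref{Ptrivial} for the left vertical, identifying both cokernels with $\mathcal{C}(\mathcal{O})$, and invoking the short five lemma --- is sound: all groups are abelian, the rows are exact by construction since $P_{\{1+N\mathbb{Z}\}}(\mathcal{O},\,\ell_\mathcal{O}N,\,N)\subseteq P(\mathcal{O},\,\ell_\mathcal{O}N)$ and $P_{\{1+N\mathbb{Z}\}}(\mathcal{O},\,N)\subseteq P(\mathcal{O},\,N)$, and every square commutes because all maps are induced by inclusions into $I(\mathcal{O})$. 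Your auxiliary fact and its kernel computation are also fine: the only implicit step worth spelling out is that each element of $I(\mathcal{O},\,M)$ can be written as $\mathfrak{a}\mathfrak{b}^{-1}$ with $\mathfrak{a},\,\mathfrak{b}\in\mathcal{M}(\mathcal{O},\,M)$, which holds because $\mathcal{M}(\mathcal{O},\,M)$ is closed under multiplication (proper ideals of an order in a quadratic field are invertible, and primality to $M$ is preserved under products, e.g.\ by multiplicativity of the norm together with Lemma \ref{basic}). Two cosmetic remarks: the surjectivity statement you use is Corollary 7.17 of Cox rather than Lemma 7.18 (the latter is the norm criterion feeding into it), and since the two $\beta$'s in your kernel argument coincide you could streamline the notation. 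What your approach buys is independence from the unpublished reference: everything reduces to Lemma \ref{Ptrivial} and the classical surjectivity of $\mathcal{M}(\mathcal{O},\,M)\to\mathcal{C}(\mathcal{O})$, which is in the spirit of Cox's treatment of ring class fields; what the paper's citation buys is brevity and consistency with the companion paper, where the analogous statement is proved directly at the level of ideals rather than by a diagram chase.
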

\begin{proof}
See \cite[Proposition 2.13]{J-K-S-Y23}. 
\end{proof}

\begin{proposition}\label{lNN}
The inclusion $I(\mathcal{O},\,\ell_\mathcal{O}N)\hookrightarrow I(\mathcal{O},\,N)$ leads to
an isomorphism
\begin{equation*}
I(\mathcal{O},\,\ell_\mathcal{O}N)/P_G(\mathcal{O},\,\ell_\mathcal{O}N,\,N)
\stackrel{\sim}{\rightarrow}\mathcal{C}_G(\mathcal{O},\,N)=
I(\mathcal{O},\,N)/P_G(\mathcal{O},\,N).
\end{equation*}
\end{proposition}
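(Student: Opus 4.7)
The plan is to show that the inclusion $I(\mathcal{O},\ell_\mathcal{O}N)\hookrightarrow I(\mathcal{O},N)$ induces a bijection $\phi\colon I(\mathcal{O},\ell_\mathcal{O}N)/P_G(\mathcal{O},\ell_\mathcal{O}N,N)\to I(\mathcal{O},N)/P_G(\mathcal{O},N)$. Well-definedness of $\phi$ is immediate, since every generator $\nu\mathcal{O}$ of $P_G(\mathcal{O},\ell_\mathcal{O}N,N)$ is prime to $\ell_\mathcal{O}N$ (and hence to $N$), while $\nu$ satisfies the same $G$-congruence used to define $P_G(\mathcal{O},N)$. Surjectivity follows at once from Lemma~\ref{Itrivial}: any class in $I(\mathcal{O},N)/P_{\{1+N\mathbb{Z}\}}(\mathcal{O},N)$ has a representative in $I(\mathcal{O},\ell_\mathcal{O}N)$, and since $P_{\{1+N\mathbb{Z}\}}(\mathcal{O},N)\subseteq P_G(\mathcal{O},N)$ the same representative works modulo $P_G(\mathcal{O},N)$.

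For injectivity, suppose $\mathfrak{a}\in I(\mathcal{O},\ell_\mathcal{O}N)$ lies in $P_G(\mathcal{O},N)$, and write $\mathfrak{a}=\prod_{i}(\nu_i\mathcal{O})^{e_i}$ with each $\nu_i\in\mathcal{O}\setminus\{0\}$, $\nu_i\mathcal{O}$ prime to $N$, and $\nu_i\equiv a_i\Mod{N\mathcal{O}}$ for an integer $a_i$ satisfying $a_i+N\mathbb{Z}\in G$. The idea is to produce, for each $i$, an element $\nu_i'\in\mathcal{O}$ with $\nu_i'\equiv\nu_i\Mod{N\mathcal{O}}$ and $\nu_i'\mathcal{O}$ prime to $\ell_\mathcal{O}$, by applying the Chinese Remainder Theorem in $\mathcal{O}$. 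At primes of $\mathcal{O}$ dividing both $\ell_\mathcal{O}$ and $N$ the condition $\nu_i'\notin\mathfrak{p}$ is already forced by $\nu_i\mathcal{O}$ being prime to $N$, so CRT is only needed at primes dividing $\ell_\mathcal{O}$ but not $N$, where $N\mathcal{O}$ and the prime are coprime. Then $\nu_i'\mathcal{O}\in P_G(\mathcal{O},\ell_\mathcal{O}N,N)$, and $\mathfrak{a}'=\prod_i(\nu_i'\mathcal{O})^{e_i}$ belongs to $P_G(\mathcal{O},\ell_\mathcal{O}N,N)$. Next, picking $x\in\mathcal{O}$ with $\nu_ix\equiv1\Mod{N\mathcal{O}}$ (possible as $\nu_i\mathcal{O}$ is prime to $N$) makes both $\nu_ix$ and $\nu_i'x$ congruent to $1$ modulo $N\mathcal{O}$, so $\nu_ix\mathcal{O}$ and $\nu_i'x\mathcal{O}$ are generators of $P_{\{1+N\mathbb{Z}\}}(\mathcal{O},N)$ and therefore $(\nu_i'/\nu_i)\mathcal{O}=(\nu_i'x\mathcal{O})(\nu_ix\mathcal{O})^{-1}\in P_{\{1+N\mathbb{Z}\}}(\mathcal{O},N)$. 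Multiplying over $i$ gives $\mathfrak{a}\mathfrak{a}'^{-1}\in I(\mathcal{O},\ell_\mathcal{O}N)\cap P_{\{1+N\mathbb{Z}\}}(\mathcal{O},N)$; by the injectivity half of Lemma~\ref{Itrivial} this intersection is $P_{\{1+N\mathbb{Z}\}}(\mathcal{O},\ell_\mathcal{O}N,N)\subseteq P_G(\mathcal{O},\ell_\mathcal{O}N,N)$, yielding $\mathfrak{a}\in P_G(\mathcal{O},\ell_\mathcal{O}N,N)$.

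The main obstacle is the CRT step, which must be carried out in the possibly non-maximal order $\mathcal{O}$: one must check that $N\mathcal{O}$ is coprime to each prime ideal of $\mathcal{O}$ above a rational prime dividing $\ell_\mathcal{O}$ but not $N$, which works because such primes of $\mathcal{O}$ are maximal and do not contain $N$. An alternative is to pass to $\mathcal{O}_K$ via Lemma~\ref{nulemma}, perform CRT in the Dedekind ring $\mathcal{O}_K$, and lift back. After that, the remaining identifications are formal manipulations anchored by Lemma~\ref{Itrivial}.
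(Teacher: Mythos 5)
Your proof is correct, and it rests on the same pillar as the paper's, namely Lemma \ref{Itrivial}; the difference lies in how the $G$-part is handled. The paper keeps the isomorphism $\rho$ of Lemma \ref{Itrivial} and shows that $\rho$ carries the kernel $P_G(\mathcal{O},\,\ell_\mathcal{O}N,\,N)/P_{\{1+N\mathbb{Z}\}}(\mathcal{O},\,\ell_\mathcal{O}N,\,N)$ onto $P_G(\mathcal{O},\,N)/P_{\{1+N\mathbb{Z}\}}(\mathcal{O},\,N)$; for the nontrivial inclusion it replaces a generator $\nu\mathcal{O}$ (with $\nu\equiv a\Mod{N\mathcal{O}}$) by the class of $(b\mathcal{O})^{-1}$, where $b$ is a rational integer with $\gcd(b,\,\ell_\mathcal{O}N)=1$ and $ab\equiv1\Mod{N}$, so the only approximation needed is the surjectivity of $(\mathbb{Z}/\ell_\mathcal{O}N\mathbb{Z})^\times\rightarrow(\mathbb{Z}/N\mathbb{Z})^\times$, i.e., CRT in $\mathbb{Z}$ alone. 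You instead verify injectivity of the induced map directly, adjusting each generator $\nu_i$ to some $\nu_i'\equiv\nu_i\Mod{N\mathcal{O}}$ prime to $\ell_\mathcal{O}N$ by CRT in the possibly non-maximal order $\mathcal{O}$; your justification of that step is sound --- the maximal ideals of $\mathcal{O}$ above primes $p\mid\ell_\mathcal{O}$ with $p\nmid N$ are finitely many, pairwise comaximal, and comaximal with $N\mathcal{O}$, while primality to $N$ is automatic from the congruence --- and the remaining bookkeeping ($(\nu_i'/\nu_i)\mathcal{O}\in P_{\{1+N\mathbb{Z}\}}(\mathcal{O},\,N)$, then $I(\mathcal{O},\,\ell_\mathcal{O}N)\cap P_{\{1+N\mathbb{Z}\}}(\mathcal{O},\,N)=P_{\{1+N\mathbb{Z}\}}(\mathcal{O},\,\ell_\mathcal{O}N,\,N)$ from the injectivity half of Lemma \ref{Itrivial}) is exactly right, and works equally well for negative exponents in the factorization. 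What the paper's variant buys is that it never needs CRT inside $\mathcal{O}$ (the step you rightly flag as delicate), since the adjustment is made with a rational integer; what yours buys is a more transparent statement of injectivity, amounting to the identity $I(\mathcal{O},\,\ell_\mathcal{O}N)\cap P_G(\mathcal{O},\,N)=P_G(\mathcal{O},\,\ell_\mathcal{O}N,\,N)$. Both are complete proofs.
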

\begin{proof}
Let $\rho$ be the isomorphism mentioned in Lemma \ref{Itrivial}, and consider the diagram of homomorphisms in Figure
\ref{homo}.
\begin{figure}[t]
\begin{equation*}
\xymatrixcolsep{6pc}
\xymatrix{
I(\mathcal{O},\,\ell_\mathcal{O}N)/P_{\{1+N\mathbb{Z}\}}(\mathcal{O},\,\ell_\mathcal{O}N,\,N)\ar@{->}[r]^\sim_\rho
\ar@{->>}[dd]_{\textrm{natural}}
 & I(\mathcal{O},\,N)/P_{\{1+N\mathbb{Z}\}}(\mathcal{O},\,N) \ar@{->>}[dd]^{\textrm{natural}} \\\\
I(\mathcal{O},\,\ell_\mathcal{O}N)/P_G(\mathcal{O},\,\ell_\mathcal{O}N,\,N) & 
I(\mathcal{O},\,N)/P_G(\mathcal{O},\,N)
}
\end{equation*}
\caption{Homomorphisms of $\mathcal{O}$-ideal class groups}
\label{homo}
\end{figure}
Since the kernels of the left and right vertical homomorphisms are
$P_G(\mathcal{O},\,\ell_\mathcal{O}N,\,N)/
P_{\{1+N\mathbb{Z}\}}(\mathcal{O},\,\ell_\mathcal{O}N,\,N)$ and
$P_G(\mathcal{O},\,N)/
P_{\{1+N\mathbb{Z}\}}(\mathcal{O},\,N)$, respectively,
it suffices to show that
\begin{equation*}
\rho(P_G(\mathcal{O},\,\ell_\mathcal{O}N,\,N)/
P_{\{1+N\mathbb{Z}\}}(\mathcal{O},\,\ell_\mathcal{O}N,\,N))=
P_G(\mathcal{O},\,N)/
P_{\{1+N\mathbb{Z}\}}(\mathcal{O},\,N).
\end{equation*}
It is obvious that 
\begin{equation*}
\rho(P_G(\mathcal{O},\,\ell_\mathcal{O}N,\,N)/
P_{\{1+N\mathbb{Z}\}}(\mathcal{O},\,\ell_\mathcal{O}N,\,N))\subseteq
P_G(\mathcal{O},\,N)/
P_{\{1+N\mathbb{Z}\}}(\mathcal{O},\,N).
\end{equation*}
For the converse inclusion, let $\nu$ be a nonzero element of $\mathcal{O}$
satisfying that $\nu\mathcal{O}$ is prime to $N$ and $\nu\equiv a\Mod{N\mathcal{O}}$ for some
integer $a$ with
$a+N\mathbb{Z}\in G$.
Take an integer $b$ such that $\gcd(b,\,\ell_\mathcal{O}N)=1$ and $ab\equiv1\Mod{N}$.
We then have $\nu b\equiv ab\equiv1\Mod{N\mathcal{O}}$ and so 
\begin{equation}\label{nub}
\nu b\mathcal{O}\in P_{\{1+N\mathbb{Z}\}}(\mathcal{O},\,N).
\end{equation}
Since $b+N\mathbb{Z}=(a+N\mathbb{Z})^{-1}\in G$, we get that
$b\mathcal{O}\in P_G(\mathcal{O},\,\ell_\mathcal{O}N,\,N)$. 
Hence we find that
\begin{eqnarray*}
\rho((b\mathcal{O})^{-1}P_{\{1+N\mathbb{Z}\}}(\mathcal{O},\,\ell_\mathcal{O}N,\,N))&=&
(b\mathcal{O})^{-1}P_{\{1+N\mathbb{Z}\}}(\mathcal{O},\,N)\\
&=&(\nu\mathcal{O})(\nu b\mathcal{O})^{-1}P_{\{1+N\mathbb{Z}\}}(\mathcal{O},\,N)\\
&=&(\nu\mathcal{O})P_{\{1+N\mathbb{Z}\}}(\mathcal{O},\,N)\quad\textrm{by (\ref{nub})}.
\end{eqnarray*}
This observation implies that
\begin{equation*}
\rho(P_G(\mathcal{O},\,\ell_\mathcal{O}N,\,N)/
P_{\{1+N\mathbb{Z}\}}(\mathcal{O},\,\ell_\mathcal{O}N,\,N))\supseteq
P_G(\mathcal{O},\,N)/
P_{\{1+N\mathbb{Z}\}}(\mathcal{O},\,N),
\end{equation*}
which completes the proof. 
\end{proof}

\begin{corollary}\label{CK}
The group $\mathcal{C}_G(\mathcal{O},\,N)$ is isomorphic to the generalized
ideal class group 
$I(\mathcal{O}_K,\,\ell_\mathcal{O}N)/
P_G(\mathcal{O}_K,\,\ell_\mathcal{O}N,\,\ell_\mathcal{O}N)$
through the mapping
sending $[\mathfrak{a}]$ to $[\mathfrak{a}\mathcal{O}_K]$
\textup{(}$\mathfrak{a}\in I(\mathcal{O},\,\ell_\mathcal{O}N)$\textup{)}. 
\end{corollary}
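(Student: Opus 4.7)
The plan is to obtain the corollary simply by chaining together the two isomorphisms already established in Propositions \ref{lNK} and \ref{lNN}, and then reading off the resulting composition on representatives.

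First I would invoke Proposition \ref{lNN}, which gives the natural isomorphism
\begin{equation*}
I(\mathcal{O},\,\ell_\mathcal{O}N)/P_G(\mathcal{O},\,\ell_\mathcal{O}N,\,N)\stackrel{\sim}{\rightarrow}\mathcal{C}_G(\mathcal{O},\,N)
\end{equation*}
induced by the inclusion of ideal groups. Inverting this identifies $\mathcal{C}_G(\mathcal{O},\,N)$ with the quotient $I(\mathcal{O},\,\ell_\mathcal{O}N)/P_G(\mathcal{O},\,\ell_\mathcal{O}N,\,N)$; concretely, given a class $[\mathfrak{a}]\in\mathcal{C}_G(\mathcal{O},\,N)$ one can always choose a representative $\mathfrak{a}\in I(\mathcal{O},\,\ell_\mathcal{O}N)$ (the same argument used inside the proof of Proposition \ref{lNN}, producing an auxiliary $b\mathcal{O}$ prime to $\ell_\mathcal{O}N$, shows that every class is hit).

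Next I would compose with the isomorphism of Proposition \ref{lNK},
\begin{equation*}
I(\mathcal{O},\,\ell_\mathcal{O}N)/P_G(\mathcal{O},\,\ell_\mathcal{O}N,\,N)\stackrel{\sim}{\rightarrow}I(\mathcal{O}_K,\,\ell_\mathcal{O}N)/P_G(\mathcal{O}_K,\,\ell_\mathcal{O}N,\,\ell_\mathcal{O}N),
\end{equation*}
which is induced by $\mathfrak{a}\mapsto\mathfrak{a}\mathcal{O}_K$ (via Lemma \ref{isomorphism}). Composing this with the inverse of the map from Proposition \ref{lNN} yields the desired isomorphism $\mathcal{C}_G(\mathcal{O},\,N)\stackrel{\sim}{\rightarrow}I(\mathcal{O}_K,\,\ell_\mathcal{O}N)/P_G(\mathcal{O}_K,\,\ell_\mathcal{O}N,\,\ell_\mathcal{O}N)$, and on representatives $\mathfrak{a}\in I(\mathcal{O},\,\ell_\mathcal{O}N)$ this composition is exactly $[\mathfrak{a}]\mapsto[\mathfrak{a}\mathcal{O}_K]$.

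There is essentially no obstacle here: both building blocks are already in hand, and the only point that needs a sentence of justification is well-definedness of the rule $[\mathfrak{a}]\mapsto[\mathfrak{a}\mathcal{O}_K]$ on arbitrary representatives of $\mathcal{C}_G(\mathcal{O},\,N)$, since $\mathfrak{a}$ a priori lies in $I(\mathcal{O},\,N)$ and must be replaced by one in $I(\mathcal{O},\,\ell_\mathcal{O}N)$. This is precisely the content of the surjectivity argument in Proposition \ref{lNN} together with the observation that two choices of such representative differ by an element of $P_G(\mathcal{O},\,\ell_\mathcal{O}N,\,N)$, whose image under $\cdot\,\mathcal{O}_K$ lies in $P_G(\mathcal{O}_K,\,\ell_\mathcal{O}N,\,\ell_\mathcal{O}N)$ by Lemma \ref{nulemma}. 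Hence the stated map is a well-defined isomorphism.
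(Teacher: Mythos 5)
Your proposal is correct and follows exactly the paper's route: the paper proves Corollary \ref{CK} simply by combining Propositions \ref{lNK} and \ref{lNN}, which is precisely the composition you describe. The extra remarks on choosing representatives in $I(\mathcal{O},\,\ell_\mathcal{O}N)$ are consistent with the arguments already contained in those propositions.
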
 
\begin{proof}
The result follows from Propositions \ref{lNK} and \ref{lNN}. 
\end{proof}

We shall denote by $K_{\mathcal{O},\,G}$ the unique abelian extension $K$ in which
every ramified prime of $K$ divides $\ell_\mathcal{O}N\mathcal{O}_K$ and
$I(\mathcal{O}_K,\,\ell_\mathcal{O}N)/P_G(\mathcal{O}_K,\,\ell_\mathcal{O}N,\,
\ell_\mathcal{O}N)\simeq\mathrm{Gal}(K_{\mathcal{O},\,G}/K)$ via the Artin map 
for the modulus $\ell_\mathcal{O}N\mathcal{O}_K$. 
Then we deduce by Corollary \ref{CK} that
\begin{equation*}
\mathrm{Gal}(K_{\mathcal{O},\,G}/K)\simeq\mathcal{C}_G(\mathcal{O},\,N). 
\end{equation*}

\section {Generation of  class fields}

By utilizing Shimura's theory of canonical models, 
we shall construct the field $K_{\mathcal{O},\,G}$ over $K$.
\par
Observe that $K$ has no real embedding. 
Let 
\begin{equation*}
\widehat{\mathbb{Z}}=\prod_{p\,:\,\textrm{primes}}\mathbb{Z}_p,\quad
\widehat{K}=K\otimes_\mathbb{Z}\widehat{\mathbb{Z}}
\quad\textrm{and}\quad
\widehat{\mathcal{O}}_K=\mathcal{O}_K\otimes_\mathbb{Z}\widehat{\mathbb{Z}}.
\end{equation*}
The group of (finite) $K$-ideles is defined by the group of units $\widehat{K}^\times$ in $\widehat{K}$. 
For an idele $s\in\widehat{K}^\times$, we
mean by $s\mathcal{O}_K$ the fractional ideal $K\cap s\widehat{\mathcal{O}}_K$
of $K$.
Through the natural inclusion 
$\widehat{K}\hookrightarrow\prod_p(K\otimes_\mathbb{Z}\mathbb{Z}_p)$,
we see that
\begin{equation}\label{identify}
\widehat{K}^\times\simeq\left\{
s=(s_p)_p\in\prod_{p\,:\,\textrm{primes}}(K\otimes_\mathbb{Z}\mathbb{Z}_p)^\times~|~
s_p\in(\mathcal{O}_K\otimes_\mathbb{Z}\mathbb{Z}_p)^\times~
\textrm{for all but finitely many $p$}
\right\}.
\end{equation}
Hence we shall identify $\widehat{K}^\times$ with the above subgroup of 
$\prod_p(K\otimes_\mathbb{Z}\mathbb{Z}_p)^\times$. 
Let $K^\mathrm{ab}$ be the maximal abelian extension of $K$. 

\begin{proposition}\label{CFT}
The Artin  map $\widehat{K}^\times\rightarrow\mathrm{Gal}(K^\mathrm{ab}/K)$ yields a one-to-one correspondence
\begin{eqnarray*}
\{\textrm{closed subgroups of $\widehat{K}^\times$ of finite index containing $K^\times$}\} 
& \rightarrow & \{\textrm{finite abelian extensions of $K$}\}\\
J & \mapsto &  \textrm{$L$ satisfying $\widehat{K}^\times/J\simeq\mathrm{Gal}(L/K)$}.
\end{eqnarray*}
\end{proposition}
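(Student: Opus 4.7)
The plan is to reduce the stated correspondence to the full idelic class field theorem for $K$, exploiting the fact that the single archimedean place of $K$ is complex.

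First, I would recall the standard idelic class field theorem: letting $\mathbb{A}_K^\times$ denote the full idele group and $C_K=\mathbb{A}_K^\times/K^\times$ the idele class group, the Artin map
\begin{equation*}
\mathbb{A}_K^\times\longrightarrow\mathrm{Gal}(K^\mathrm{ab}/K)
\end{equation*}
induces an order-reversing bijection between finite abelian extensions $L/K$ and closed subgroups of finite index of $\mathbb{A}_K^\times$ that contain $K^\times$ (equivalently, open subgroups of $C_K$), where $L$ corresponds to the preimage of $\mathrm{Gal}(K^\mathrm{ab}/L)$.

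Next, since $K$ is imaginary quadratic, $K\otimes_\mathbb{Q}\mathbb{R}\simeq\mathbb{C}$, and so we have the decomposition
\begin{equation*}
\mathbb{A}_K^\times=\mathbb{C}^\times\times\widehat{K}^\times,
\end{equation*}
under which $K^\times$ embeds diagonally via $k\mapsto(k,k)$. I would then observe two topological facts: (a) the factor $\mathbb{C}^\times$ is connected, so the connected component of the identity in $\mathbb{A}_K^\times$ is $\mathbb{C}^\times\times\{1\}$; and (b) any closed subgroup $J^*$ of finite index in $\mathbb{A}_K^\times$ is automatically open, because its complement is a finite union of cosets of a closed set. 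Consequently any such $J^*$ contains the connected component of the identity, and therefore the whole factor $\mathbb{C}^\times\times\{1\}$. This forces
\begin{equation*}
J^*=\mathbb{C}^\times\times J,\qquad J:=J^*\cap\widehat{K}^\times,
\end{equation*}
where $J$ is a closed subgroup of finite index in $\widehat{K}^\times$. Conversely, for any such $J$, the product $\mathbb{C}^\times\times J$ is a closed subgroup of finite index in $\mathbb{A}_K^\times$.

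Finally, it remains to trace the condition $K^\times\subseteq J^*$ through this correspondence. Since the archimedean component of $(k,k)$ is arbitrary in $\mathbb{C}^\times$ as $k$ varies, we have $(k,k)\in\mathbb{C}^\times\times J$ if and only if $k\in J$ under the diagonal embedding $K^\times\hookrightarrow\widehat{K}^\times$. Hence $K^\times\subseteq J^*$ is equivalent to $K^\times\subseteq J$, and the Artin map descends from $\mathbb{A}_K^\times$ to $\widehat{K}^\times$ by projecting out the (trivially acting) archimedean factor. Combining this with the idelic CFT bijection gives the asserted one-to-one correspondence $J\leftrightarrow L$. The main delicate point to verify is the topological step (a)--(b) above, namely that closed subgroups of finite index in $\mathbb{A}_K^\times$ automatically absorb the full connected archimedean factor; once this is in hand, the rest is a matter of rewriting the classical statement in terms of $\widehat{K}^\times$.
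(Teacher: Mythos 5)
Your argument is correct. Note that the paper offers no proof of its own here: it simply cites Neukirch, \emph{Class Field Theory}, \S IV.7, where the idelic existence theorem is established. What you have done is supply the (standard but genuinely necessary) reduction from the full idelic statement to the finite-idele formulation used in the paper, and the reduction is sound: $\widehat{K}^\times$ is totally disconnected while the unique archimedean completion is $\mathbb{C}^\times$, so the identity component of $\mathbb{A}_K^\times=\mathbb{C}^\times\times\widehat{K}^\times$ is exactly $\mathbb{C}^\times\times\{1\}$; a closed finite-index subgroup is open (complement a finite union of closed cosets), hence contains the identity component, hence splits as $\mathbb{C}^\times\times J$; and the diagonal condition $K^\times\subseteq J^*$ translates precisely into $K^\times\subseteq J$ inside the finite ideles. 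The only point you leave implicit is why the Artin map is trivial on the factor $\mathbb{C}^\times$ (so that it descends to $\widehat{K}^\times$); this is immediate since the continuous image of the connected group $\mathbb{C}^\times$ in the profinite group $\mathrm{Gal}(K^\mathrm{ab}/K)$ is connected, hence trivial, and you correctly flag it as the trivially acting archimedean factor. So your route proves the proposition rather than quoting it, at the modest cost of invoking the full idelic class field theory as input --- which is exactly what the paper's reference does anyway.
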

\begin{proof}
See \cite[$\S$IV.7]{Neukirch}. 
\end{proof}

Now, we set
\begin{eqnarray*}
\mathbb{Z}_G&=&\{t\in\mathbb{Z}~|~0\leq t<N~\textrm{and}~t+N\mathbb{Z}\in G\},\\
\Gamma_G&=&\left\{\gamma\in\mathrm{SL}_2(\mathbb{Z})~|~\gamma\equiv\begin{bmatrix}
t& \mathrm{*}\\0 & \mathrm{*}\end{bmatrix}\Mod{NM_2(\mathbb{Z})}~
\textrm{for some}~t\in\mathbb{Z}_G
\right\},\\
\mathcal{F}_{\Gamma_G,\,\mathbb{Q}}&=&\textrm{the field of meromorphic modular functions for $\Gamma_G$
with rational Fourier coefficients}.
\end{eqnarray*}
For each prime $p$, we let
$\mathcal{O}_{K,\,p}=\mathcal{O}_K\otimes_\mathbb{Z}\mathbb{Z}_p$
and
 $\mathcal{O}_p=\mathcal{O}\otimes_\mathbb{Z}\mathbb{Z}_p$. 

\begin{lemma}\label{idelegroup}
In the sense of \textup{Proposition \ref{CFT}}, we establish the following two correspondences. 
\begin{enumerate}
\item[\textup{(i)}] The field $K_{\mathcal{O},\,\{1+N\mathbb{Z}\}}$ corresponds to the
subgroup 
\begin{equation*}
J_{\mathcal{O},\,\{1+N\mathbb{Z}\}}=K^\times\left\{
\prod_{p\,|\,N}(1+N\mathcal{O}_p)\times\prod_{p\,\nmid\,N}\mathcal{O}_p^\times\right\}~\textrm{of}~
\widehat{K}^\times.
\end{equation*}
\item[\textup{(ii)}]
The field $K_{\mathcal{O},\,G}$ corresponds to the subgroup
\begin{equation*}
J_{\mathcal{O},\,G}=\bigcup_{t\in\mathbb{Z}_G}K^\times\left\{
\prod_{p\,|\,N}(t+N\mathcal{O}_p)\times
\prod_{p\,\nmid\,N}\mathcal{O}_p^\times\right\}~\textrm{of}~
\widehat{K}^\times.
\end{equation*}
\end{enumerate}
\end{lemma}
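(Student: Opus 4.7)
Observe that (i) is the special case $G=\{1+N\mathbb{Z}\}$ of (ii), since $\mathbb{Z}_G=\{1\}$ collapses the union in (ii) to a single factor. So I focus on (ii), from which (i) follows as a specialization.

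The plan is to translate Corollary \ref{CK} into the idelic language of Proposition \ref{CFT}. Set $\mathfrak{m}=\ell_\mathcal{O}N\mathcal{O}_K$. I would construct the map
\[
\pi~:~\widehat{K}^\times\longrightarrow \mathrm{Cl}_G:=I(\mathcal{O}_K,\,\ell_\mathcal{O}N)/P_G(\mathcal{O}_K,\,\ell_\mathcal{O}N,\,\ell_\mathcal{O}N),
\]
sending $s$ to the class of $(\alpha s)\mathcal{O}_K$, where $\alpha\in K^\times$ is chosen via strong approximation so that $(\alpha s)_p\equiv 1\Mod{\mathfrak{m}\mathcal{O}_{K,p}}$ for every $p\,|\,\mathfrak{m}$. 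Well-definedness follows from the observation that any two such $\alpha$'s differ by a global element congruent to $1$ modulo $\mathfrak{m}$, whose associated principal ideal lies in $P_G(\mathcal{O}_K,\,\ell_\mathcal{O}N,\,\ell_\mathcal{O}N)$; surjectivity is immediate from the existence of ``uniformizer'' ideles; and $\pi$ coincides with the composite of the Artin map $\widehat{K}^\times\to\mathrm{Gal}(K_{\mathcal{O},G}/K)$ with the isomorphism of Corollary \ref{CK}. By Proposition \ref{CFT}, proving (ii) thus reduces to showing $\ker\pi=J_{\mathcal{O},G}$.

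Unpacking $\ker\pi$: an idele $s$ lies there exactly when $\alpha s=\nu u$ for some $\nu\in\mathcal{O}_K$ satisfying $\nu\equiv b\Mod{\mathfrak{m}}$ with $b\in\mathbb{Z}_G$, and some $u\in\widehat{\mathcal{O}}_K^\times$ with $u_p\equiv b^{-1}\Mod{\mathfrak{m}\mathcal{O}_{K,p}}$ at all $p\,|\,\mathfrak{m}$. At primes $p\,\nmid\,\ell_\mathcal{O}$ one has $\mathcal{O}_p=\mathcal{O}_{K,p}$ and $N\mathcal{O}_p=\mathfrak{m}\mathcal{O}_{K,p}$, so the local conditions defining $\ker\pi$ and $J_{\mathcal{O},G}$ match on the nose. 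At primes $p\,|\,\ell_\mathcal{O}$ the two formulations genuinely diverge, and this is precisely where Lemma \ref{nulemma} enters: its two equivalent lists of hypotheses encode the passage between ``$\nu\equiv a\Mod{N\mathcal{O}}$'' and ``$\nu\equiv b\Mod{\ell_\mathcal{O}N\mathcal{O}_K}$'' for global elements $\nu\in K^\times$. Transporting this equivalence to the idelic representatives (together with a $K^\times$-adjustment supplied by weak approximation to fix local units at conductor primes $p\,|\,\ell_\mathcal{O}$ with $p\,\nmid\,N$) gives both containments $\ker\pi\subseteq J_{\mathcal{O},G}$ and $J_{\mathcal{O},G}\subseteq\ker\pi$.

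The main obstacle is this reconciliation at primes dividing the conductor $\ell_\mathcal{O}$, where the local condition ``$u_p\in\mathcal{O}_p^\times$'' in $J_{\mathcal{O},G}$ at $p\,\nmid\,N$ is a priori strictly weaker than the congruence ``$u_p\equiv t\Mod{\mathfrak{m}\mathcal{O}_{K,p}}$'' appearing in $\ker\pi$; only after the $K^\times$-adjustment legitimized by Lemma \ref{nulemma} do the two formulations become globally equivalent. With $\ker\pi=J_{\mathcal{O},G}$ in hand, Proposition \ref{CFT} identifies $J_{\mathcal{O},G}$ as the subgroup of $\widehat{K}^\times$ corresponding to $K_{\mathcal{O},G}$, completing both (i) and (ii).
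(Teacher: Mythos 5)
Your strategy is viable and genuinely different from the paper's. The paper quotes Cox's Lemma 15.20 for part (i) and then gets (ii) by a purely formal coset computation: writing $\phi$ for the surjection $\widehat{K}^\times\rightarrow I(\mathcal{O}_K,\,\ell_\mathcal{O}N)/P_{\{1+N\mathbb{Z}\}}$ coming from (i), it constructs explicit ideles $s(t)$ supported at the primes dividing $\ell_\mathcal{O}N$ with $\phi(s(t))=[t_2\mathcal{O}_K]$ and identifies $\phi^{-1}(P_G/P_{\{1+N\mathbb{Z}\}})=\bigcup_{t\in\mathbb{Z}_G}s(t)\ker(\phi)$ with $J_{\mathcal{O},\,G}$; all of the delicate order-versus-maximal-order analysis at conductor primes is thereby outsourced to Cox. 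You instead propose to prove (ii) directly (hence (i) as the case $\mathbb{Z}_G=\{1\}$) by showing $\ker\pi=J_{\mathcal{O},\,G}$ for the map $\pi$ onto $I(\mathcal{O}_K,\,\ell_\mathcal{O}N)/P_G(\mathcal{O}_K,\,\ell_\mathcal{O}N,\,\ell_\mathcal{O}N)$. That reduction is legitimate, granted the standard compatibility of the idelic Artin map with the ideal-theoretic Artin map for the modulus $\ell_\mathcal{O}N\mathcal{O}_K$ (this, rather than Corollary~\ref{CK}, is the fact you need to invoke for ``$\pi$ computes the correspondence of Proposition~\ref{CFT}'').

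The gap is that the equality $\ker\pi=J_{\mathcal{O},\,G}$, which in your setup is the entire content of the lemma, is asserted rather than proved. The inclusion $\ker\pi\subseteq J_{\mathcal{O},\,G}$ is the easier half (note also that elements of $P_G$ are quotients of the stated generators, so ``$\alpha s=\nu u$ with a single $\nu$'' is not their general shape, though this is easily repaired). The reverse inclusion is where the real work sits, and it is exactly the generalized content of Cox's Lemma 15.20: starting from an idele whose only condition at $p\,|\,\ell_\mathcal{O}$, $p\nmid N$ is $u_p\in\mathcal{O}_p^\times$ and whose condition at $p\,|\,N$ is a congruence modulo $N\mathcal{O}_p$ (not modulo $\ell_\mathcal{O}N\mathcal{O}_{K,\,p}$), you must show the normalizing principal ideal $\alpha\mathcal{O}_K$ lies in $P_G(\mathcal{O}_K,\,\ell_\mathcal{O}N,\,\ell_\mathcal{O}N)$. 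Lemma~\ref{nulemma} does apply, but only to a global element of $\mathcal{O}$ satisfying a global congruence modulo $N\mathcal{O}$; your $\alpha$ is a priori neither integral nor constrained at primes away from $\ell_\mathcal{O}N$, so one still has to clear denominators by an integer $d\equiv1\Mod{\ell_\mathcal{O}N}$, check that the purely local data assemble into the hypotheses of Lemma~\ref{nulemma} (including that elements of $1+\ell_\mathcal{O}\mathcal{O}_{K,\,p}$ are units of $\mathcal{O}_p$ at conductor primes), and only then conclude $\alpha\mathcal{O}_K\in P_G$. ``Transporting the equivalence to the idelic representatives together with a $K^\times$-adjustment'' names this step but does not carry it out, so as written the argument is incomplete precisely at the point the paper's citation of Cox was designed to cover.
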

\begin{proof}
\begin{enumerate}
\item[(i)] See \cite[Lemma 15.20]{Cox}.
\item[(ii)] 
For simplicity, let 
$P_G=P_G(\mathcal{O}_K,\,\ell_\mathcal{O}N,\,\ell_\mathcal{O}N)$. 
The Artin map for the modulus $\ell_\mathcal{O}N\mathcal{O}_K$ induces
the isomorphism
\begin{equation*}
I(\mathcal{O}_K,\,\ell_\mathcal{O}N)/P_{\{1+N\mathbb{Z}\}}
\stackrel{\sim}{\rightarrow}\mathrm{Gal}(K_{\mathcal{O},\,G}/K). 
\end{equation*}
Therefore, by Proposition \ref{CFT} and (i) the Artin map $\widehat{K}^\times\rightarrow\mathrm{Gal}(K^\mathrm{ab}/K)$ yields the surjection
\begin{eqnarray*}
\phi~:~\widehat{K}^\times&\rightarrow& I(\mathcal{O}_K,\,\ell_\mathcal{O}N)/P_{\{1+N\mathbb{Z}\}}\\
s & \mapsto & [\nu_ss\mathcal{O}_K] 
\end{eqnarray*}
with $\mathrm{ker}(\phi)=J_{\mathcal{O},\,\{1+N\mathbb{Z}\}}$. 
Here, $\nu_s$ is any element of $K^\times$ so that 
\begin{equation*}
\nu_ss_p\in 1+\ell_\mathcal{O}N\mathcal{O}_{K,\,p}\quad\textrm{for all}~p\,|\,\ell_\mathcal{O}N
\end{equation*} 
which can be taken by the approximation theorem (\cite[Chapter IV]{Janusz}). 
For each $t\in\mathbb{Z}_G$, 
choose a pair of nonzero integers $t_1$ and $t_2$ such that
\begin{equation*}
\gcd(t_1,\,\ell_\mathcal{O}N)=
\gcd(t_2,\,\ell_\mathcal{O}N)=1,~
t_1\equiv t\Mod{N}~\textrm{and}~t_1t_2\equiv1\Mod{\ell_\mathcal{O}N} 
\end{equation*}
by using the fact that the reduction $(\mathbb{Z}/\ell_\mathcal{O}N\mathbb{Z})^\times\rightarrow(\mathbb{Z}/N\mathbb{Z})^\times$ is surjective.
Let $s=s(t)=(s_p)_p$ be the element of $\widehat{K}^\times$ defined by
\begin{equation}\label{s_p}
s_p=\left\{\begin{array}{ll}
t_1 & \textrm{if}~p\,|\,\ell_\mathcal{O}N,\\
1 & \textrm{if}~p\nmid\ell_\mathcal{O}N.
\end{array}\right.
\end{equation}
Furthermore, if we let $\nu_s=t_2$ ($\in K^\times$), then we see that
\begin{equation*}
\nu_s s_p\in 1+\ell_\mathcal{O}N\mathcal{O}_{K,\,p}\quad\textrm{for all}~p\,|\,\ell_\mathcal{O}N,
\end{equation*}
and hence in the generalized ideal class group 
$I(\mathcal{O}_K,\,\ell_\mathcal{O}N)/P_{\{1+N\mathbb{Z}\}}$
\begin{equation}\label{tst}
\phi(s)=[\nu_ss\mathcal{O}_K]=[t_2s\mathcal{O}_K]=[t_2\mathcal{O}_K]. 
\end{equation}
We deduce from the inclusion $P_G\supseteq P_{\{1+N\mathbb{Z}\}}$ that
\begin{equation*}
K_{\mathcal{O},\,G}\subseteq K_{\mathcal{O},\,\{1+N\mathbb{Z}\}}\quad\textrm{and}\quad 
\mathrm{Gal}(K_{\mathcal{O},\,\{1+N\mathbb{Z}\}}/K_{\mathcal{O},\,G})\simeq
P_G/P_{\{1+N\mathbb{Z}\}}. 
\end{equation*}
We then find that
\begin{eqnarray*}
\phi^{-1}(P_G/P_{\{1+N\mathbb{Z}\}})&=&
\phi^{-1}(\{
[t_1\mathcal{O}_K]\,|\,t\in\mathbb{Z}_G \})\quad\textrm{by the definitions of $P_G$ and
$P_{\{1+N\mathbb{Z}\}}$}\\
&=&
\phi^{-1}(\{
[t_2\mathcal{O}_K]\,|\,t\in\mathbb{Z}_G \})\quad\textrm{because $G$ is a 
subgroup of $(\mathbb{Z}/N\mathbb{Z})^\times$}\\
&=&\bigcup_{t\in\mathbb{Z}_G}s(t)\mathrm{ker}(\phi)\quad\textrm{by (\ref{tst})}\\
&=&\bigcup_{t\in\mathbb{Z}_G}K^\times
\left\{
\prod_{p\,|\,N}(t_1+N\mathcal{O}_p)\times
\prod_{p\,\nmid\,N,~p\,|\,\ell_\mathcal{O}}t_1\mathcal{O}_p^\times\times
\prod_{p\,\nmid\,\ell_\mathcal{O}N}\mathcal{O}_p^\times\right\}\\
&&\hspace{4cm}\textrm{by  the fact $\mathrm{ker}(\phi)=J_{\mathcal{O},\,\{1+N\mathbb{Z}\}}$ and (\ref{s_p})}\\
&=&\bigcup_{t\in\mathbb{Z}_G}K^\times
\left\{\prod_{p\,|\,N}(t+N\mathcal{O}_p)\times
\prod_{p\,\nmid\,N}\mathcal{O}_p^\times\right\}.
\end{eqnarray*}
\end{enumerate}
\end{proof}

Let $\mathcal{F}_N$ be the field of meromorphic modular functions for the principal congruence subgroup 
$\Gamma(N)=\{\alpha\in\mathrm{SL}_2(\mathbb{Z})~|~\alpha\equiv I_2\Mod{NM_2(\mathbb{Z})}\}$
whose Fourier coefficients belong to $\mathbb{Q}(\zeta_N)$. 
As is well known, $\mathcal{F}_N$ is Galois over $\mathcal{F}_1$ and $\mathrm{Gal}(\mathcal{F}_N/\mathcal{F}_1)
\simeq\mathrm{GL}_2(\mathbb{Z}/N\mathbb{Z})/\langle-I_2\rangle$. More precisely, let $\gamma\in\mathrm{GL}_2(\mathbb{Z}/N\mathbb{Z})/\langle-I_2\rangle$ \textup{(}$\simeq\mathrm{Gal}(\mathcal{F}_N/\mathcal{F}_1)$\textup{)}
and $h\in\mathcal{F}_N$ with Fourier expansion 
\begin{equation*}
h(\tau)=\sum_{n\gg-\infty}c_nq^{n/N}\quad(c_n\in\mathbb{Q}(\zeta_N),\,\tau\in\mathbb{H},\,q=e^{2\pi\mathrm{i}\tau}). 
\end{equation*}
 If $\gamma\in\mathrm{SL}_2(\mathbb{Z}/N\mathbb{Z})/\langle-I_2\rangle$, then
$h^\gamma=h\circ\widetilde{\gamma}$ where $\widetilde{\gamma}$ is any element of $\mathrm{SL}_2(\mathbb{Z})$
which reduces to $\gamma$. 
On the other hand, if $\gamma$ is obtained by reducing
$\begin{bmatrix}1&0\\0&d\end{bmatrix}$ for some integer $d$ relatively prime to $N$, then
$h^\gamma=\sum_nc_n^{\sigma_d}q^{n/N}$ where
$\sigma_d$ is the automorphism of $\mathbb{Q}(\zeta_N)$ defined by $\zeta_N\mapsto\zeta_N^d$ 
 (\cite[Theorem 3 in Chapter 6]{Lang87}
and \cite[Proposition 6.9 (1)]{Shimura}).
Now, if we set 
\begin{equation*}
\mathcal{F}=\bigcup_{N=1}^\infty\mathcal{F}_N\quad\textrm{and}\quad
\widehat{\mathbb{Q}}=\mathbb{Q}\otimes_\mathbb{Z}\widehat{\mathbb{Z}}, 
\end{equation*}
then we get an exact sequence
\begin{equation*}
1\rightarrow\mathbb{Q}^\times\rightarrow\mathrm{GL}_2(\widehat{\mathbb{Q}})
\rightarrow\mathrm{Gal}(\mathcal{F}/\mathbb{Q})\rightarrow 1
\end{equation*}
(\cite[Theorem 2 in Chapter 7 and p. 79]{Lang87} or \cite[Theorem 6.23]{Shimura}).

Let $\omega\in K\cap\mathbb{H}$. We define an embedding
\begin{equation*}
q_\omega:K^\times\rightarrow\mathrm{GL}_2^+(\mathbb{Q})
\end{equation*}
by using the relation 
\begin{equation}\label{qwrelation}
\tau\begin{bmatrix}\omega\\1\end{bmatrix}=
q_\omega(\tau)\begin{bmatrix}\omega\\1\end{bmatrix}\quad(\tau\in K^\times).
\end{equation}
By continuity $q_\omega$ can be extended to an embedding
$(K\otimes_\mathbb{Z}\mathbb{Z}_p)^\times\rightarrow\mathrm{GL}_2(\mathbb{Q}_p)$
for each prime $p$. Thus we obtain an embedding
\begin{equation*}
q_\omega:\widehat{K}^\times\rightarrow\mathrm{GL}_2(\widehat{\mathbb{Q}}). 
\end{equation*}
For an open subgroup $S$ of $\mathrm{GL}_2(\widehat{\mathbb{Q}})$ containing scalars $\mathbb{Q}^\times$
such that $S/\mathbb{Q}^\times$ is compact, we define
\begin{eqnarray*}
\Gamma_S&=&S\cap\mathrm{GL}_2^+(\mathbb{Q}),\\
\mathcal{F}_S&=&\{h\in\mathcal{F}~|~h^\gamma=h~\textrm{for all}~\gamma\in S\},\\
k_S&=&\{\nu\in\mathbb{Q}^\mathrm{ab}~|~\nu^{[s,\,\mathbb{Q}]}=\nu~\textrm{for all}~s\in\mathbb{Q}^\times
\det(S)\,(\subset\widehat{\mathbb{Q}}^\times)\}.
\end{eqnarray*}
Here, $\mathbb{Q}^\mathrm{ab}$ is
the maximal abelian extension of $\mathbb{Q}$
and $[\,\cdot\,,\,\mathbb{Q}]$ is the Artin map for $\mathbb{Q}$. 
The theory of canonical models yields the following proposition. 

\begin{proposition}\label{canonical}
With the above notations, we have
\begin{enumerate}
\item[\textup{(i)}] $\Gamma_S/\mathbb{Q}^\times$ is a Fuchsian group of the first kind
commensurable with $\mathrm{SL}_2(\mathbb{Z})/\langle-I_2\rangle$. 
\item[\textup{(ii)}] $\mathbb{C}\mathcal{F}_S$ is the field of meromorphic modular functions
for $\Gamma_S/\mathbb{Q}^\times$. 
\item[\textup{(iii)}] $k_S$ is algebraically closed in $\mathcal{F}_S$. 
\item[\textup{(iv)}] If $\omega\in K\cap\mathbb{H}$, then the subgroup $K^\times q_\omega^{-1}(S)$
of $\widehat{K}^\times$ corresponds to the subfield
\begin{equation*}
K(h(\omega)~|~h\in\mathcal{F}_S~\textrm{is finite at}~\omega)
\end{equation*}
of $K^\mathrm{ab}$ in the sense of \textup{Proposition \ref{CFT}}.
\end{enumerate}
\end{proposition}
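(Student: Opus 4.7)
The plan is to deduce each of the four assertions from Shimura's general theory of canonical models \cite{Shimura}, whose standing hypotheses (an open subgroup $S$ of $\mathrm{GL}_2(\widehat{\mathbb{Q}})$ containing $\mathbb{Q}^\times$ with $S/\mathbb{Q}^\times$ compact) are exactly what we have imposed. The argument is essentially a compilation, so the real work is matching each item to the correct place in Shimura's Chapter 6.

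For (i), I would note that $S\cap\mathrm{GL}_2(\widehat{\mathbb{Z}})$ is open in $\mathrm{GL}_2(\widehat{\mathbb{Z}})$, so modulo scalars it contains the image of some principal congruence subgroup $\Gamma(N)$; by strong approximation this forces $\Gamma_S/\mathbb{Q}^\times$ to contain a finite-index subgroup of $\mathrm{SL}_2(\mathbb{Z})/\langle -I_2\rangle$, and conversely to be contained in finitely many cosets of $\mathrm{SL}_2(\mathbb{Z})/\langle -I_2\rangle$. This yields commensurability and Fuchsian-of-first-kind. Part (ii) then follows by descent: the subfield $\mathcal{F}_S\subset\mathcal{F}=\bigcup_N\mathcal{F}_N$ fixed by $S$ already realizes, after base-changing to $\mathbb{C}$, the function field of the modular curve $\Gamma_S\backslash\mathbb{H}^{*}$, because each $\mathbb{C}\mathcal{F}_N$ is the function field of $X(N)$ and $\Gamma_S\backslash\mathbb{H}^{*}$ is a quotient by a commensurable group.

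For (iii), the claim that $k_S$ is algebraically closed in $\mathcal{F}_S$ is part of the defining property of a canonical model: one identifies $\mathcal{F}_S$ with the function field of a geometrically irreducible model of $\Gamma_S\backslash\mathbb{H}^{*}$ defined over $k_S$. The key input is the exact sequence
\begin{equation*}
1\to\mathbb{Q}^{\times}\to\mathrm{GL}_{2}(\widehat{\mathbb{Q}})\to\mathrm{Gal}(\mathcal{F}/\mathbb{Q})\to 1
\end{equation*}
recalled just before the proposition, together with $\mathcal{F}_S\cap\overline{\mathbb{Q}}=k_S$, both standard in Shimura.

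For (iv), the heart of the matter, I would invoke Shimura's reciprocity law for CM points: for $s\in\widehat{K}^{\times}$ and $h\in\mathcal{F}$ finite at $\omega$, one has $h(\omega)^{[s,K]}=h^{q_{\omega}(s)^{-1}}(\omega)$, where $[\,\cdot\,,K]$ is the Artin map. Hence $s$ fixes every value $h(\omega)$ with $h\in\mathcal{F}_S$ finite at $\omega$ if and only if $q_{\omega}(s)\in S$, i.e.\ $s\in q_{\omega}^{-1}(S)$; since $K^{\times}\subseteq\ker[\,\cdot\,,K]$, the full fixing subgroup is $K^{\times}q_{\omega}^{-1}(S)$. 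Applying Proposition \ref{CFT} then yields the stated correspondence. The main obstacle is careful bookkeeping of inversion and normalization conventions between $q_{\omega}$, the Artin map $[\,\cdot\,,K]$, and the right action of $\mathrm{GL}_{2}(\widehat{\mathbb{Q}})$ on $\mathcal{F}$; once these are reconciled with Shimura's normalizations, the identification is essentially formal.
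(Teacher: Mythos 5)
Your parts (i)--(iii) are acceptable in spirit: the paper itself disposes of the whole proposition by citing Shimura's Propositions 6.27 and 6.33, and your sketches for (i) and (ii) follow the standard lines (though for (iii) your argument is nearly circular, since ``$k_S$ is algebraically closed in $\mathcal{F}_S$'' and ``$\mathcal{F}_S\cap\overline{\mathbb{Q}}=k_S$'' are essentially the same assertion, so you are citing the claim as its own key input).

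The genuine gap is in (iv). Your biconditional ``$[s,K]$ fixes every value $h(\omega)$ with $h\in\mathcal{F}_S$ finite at $\omega$ if and only if $q_\omega(s)\in S$'' is false, and the subsequent remark about $K^\times\subseteq\ker[\,\cdot\,,K]$ does not repair it. Indeed any $s\in K^\times$ fixes all the values (its Artin symbol is trivial) while $q_\omega(s)$ in general does not lie in $S$, so the fixing group can only be $K^\times q_\omega^{-1}(S)$; more importantly, the hard inclusion --- that the fixer is \emph{no larger} than $K^\times q_\omega^{-1}(S)$ --- does not follow formally from the reciprocity law. From $h^{q_\omega(s)^{-1}}(\omega)=h(\omega)$ for all $h\in\mathcal{F}_S$ finite at $\omega$ you may not conclude $h^{q_\omega(s)^{-1}}=h$, i.e.\ $q_\omega(s)\in S$: the two functions are only known to agree at the single CM point $\omega$. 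The actual argument behind Shimura's Proposition 6.33 writes $q_\omega(s)^{-1}$ as a product of an element of an open subgroup and an element of $\mathrm{GL}_2^+(\mathbb{Q})$, uses the fact that the functions in $\mathcal{F}_S$ separate points of the curve attached to $\Gamma_S$ (this is exactly the content the paper isolates later as Lemma \ref{kSQ} (ii)) to deduce that the transformed point is $\Gamma_S$-equivalent to $\omega$, and then identifies the isotropy subgroup of $\omega$ in $\mathrm{GL}_2^+(\mathbb{Q})$ with $q_\omega(K^\times)$, which is what finally produces the factor $K^\times$. Both ingredients (separation of points and the isotropy computation) are absent from your sketch, so (iv) is not established as written; citing Shimura's Proposition 6.33 directly, as the paper does, is the clean way to close it.
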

\begin{proof}
See \cite[Propositions 6.27 and 6.33]{Shimura}. 
\end{proof}

\begin{lemma}\label{kSQ}
With the notations as in \textup{Proposition \ref{canonical}},  we get the following.  
\begin{enumerate}
\item[\textup{(i)}] 
If $S$ contains $\left\{
\begin{bmatrix}1&0\\0&d\end{bmatrix}~|~d\in\widehat{\mathbb{Z}}^\times\right\}$, then $\mathcal{F}_S$ coincides with
the field of meromorphic modular functions for $\Gamma_S/\mathbb{Q}^\times$ with rational Fourier coefficients. 
\item[\textup{(ii)}] Let $\omega$ and $\omega'$ be elements of $K\cap\mathbb{H}$ such that
if $h\in\mathcal{F}_S$ is finite at $\omega$, then it is also finite at $\omega'$. If
$h(\omega)=h(\omega')$ for all $h\in\mathcal{F}_S$ finite at $\omega$, 
then $\omega=\gamma(\omega')$ for some $\gamma\in\Gamma_S/\mathbb{Q}^\times$.
Here, $\gamma$ acts on $\omega'$ as a fractional linear transformation. 
\end{enumerate}
\end{lemma}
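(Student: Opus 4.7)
I would split the claimed equality into two inclusions. For $\mathcal{F}_S\subseteq\{\text{modular functions for }\Gamma_S/\mathbb{Q}^\times\text{ with rational Fourier coefficients}\}$, take $h\in\mathcal{F}_S\cap\mathcal{F}_N$ with expansion $\sum_nc_nq^{n/N}$; the hypothesis places every $\mathrm{diag}(1,d)$ (with $d\in\widehat{\mathbb{Z}}^\times$) in $S$, and by the Galois-action formula recalled just before Proposition~\ref{canonical} such an element sends the Fourier coefficients through $\sigma_d$. Invariance of $h$ under all such matrices forces every $c_n$ to be fixed by the full Galois group of $\mathbb{Q}(\zeta_N)/\mathbb{Q}$, hence $c_n\in\mathbb{Q}$; modularity for $\Gamma_S/\mathbb{Q}^\times$ comes from Proposition~\ref{canonical}(ii). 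For the reverse inclusion, first note that the same hypothesis gives $\det(S)\supseteq\widehat{\mathbb{Z}}^\times$ and hence $\mathbb{Q}^\times\det(S)=\widehat{\mathbb{Q}}^\times$ via the standard decomposition $\widehat{\mathbb{Q}}^\times=\mathbb{Q}^\times\widehat{\mathbb{Z}}^\times$, so $k_S=\mathbb{Q}$. Given such an $h$, Proposition~\ref{canonical}(ii) writes $h=\sum_{i=1}^n c_if_i$ with $c_i\in\mathbb{C}$ and $\mathbb{C}$-linearly independent $f_i\in\mathcal{F}_S$; each $f_i$ has rational Fourier coefficients by the first inclusion, and since $q$-expansion shows that $\mathbb{C}$-linear independence of modular functions yields $\mathbb{Q}$-linear independence of their Fourier sequences, the coefficient-comparison system over $\mathbb{Q}$ has a unique solution, forcing $c_i\in\mathbb{Q}$ and hence $h\in\mathcal{F}_S$.

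\textbf{Part (ii).} The strategy is to use point-separation in $\mathbb{C}\mathcal{F}_S$ and then descend to $\mathcal{F}_S$. By Proposition~\ref{canonical}(i)-(ii), $\mathbb{C}\mathcal{F}_S$ is the meromorphic function field of the compactified modular curve $X_S=(\Gamma_S/\mathbb{Q}^\times)\backslash\mathbb{H}^*$ and as such separates points. Supposing for contradiction that $\omega,\omega'$ project to distinct points $P\ne P'$ of $X_S$, I would pick $F\in\mathbb{C}\mathcal{F}_S$ finite at both with $F(P)\ne F(P')$, decompose $F=\sum_i c_if_i$ with $c_i\in\mathbb{C}$ and $f_i\in\mathcal{F}_S$, and from this extract a single $h\in\mathcal{F}_S$ finite at $\omega$ with either $h(\omega)\ne h(\omega')$ or $h$ having a pole at $\omega'$---either case contradicting the hypothesis. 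Hence $\omega$ and $\omega'$ lie in the same $\Gamma_S/\mathbb{Q}^\times$-orbit, giving $\omega=\gamma(\omega')$ for some $\gamma\in\Gamma_S/\mathbb{Q}^\times$.

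\textbf{Main obstacle.} The delicate step is extracting the single $h\in\mathcal{F}_S$ in part (ii): individual summands $f_i$ of $F=\sum c_if_i$ may have poles at $P$ or $P'$ that cancel in the sum, so no single $f_i$ need be finite where $F$ is. Two routes seem available: (a) multiply through by a common denominator $g\in\mathcal{F}_S$ that clears the poles while not vanishing at $P,P'$, then argue via Fourier coefficients as in part (i); or (b) invoke the canonical $k_S$-rational model $V_S$ of $X_S$ from Shimura's \cite{Shimura}, whose local ring at each closed point is generated over $k_S$ by elements of $\mathcal{F}_S$, yielding the desired separating $h$ at once. I would favor (b) for compatibility with the framework already developed in the excerpt.
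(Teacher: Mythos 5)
Your overall strategy (rationality of Fourier coefficients via the $\operatorname{diag}(1,d)$-action, $k_S=\mathbb{Q}$ from $\det(S)\supseteq\widehat{\mathbb{Z}}^\times$, and point-separation on the canonical model) is the right one --- it is essentially what lies behind the references the paper cites, namely \cite[Theorem 26.4]{Shimura98} and \cite[Lemma 7.1]{J-K-S-Y223}, since the paper itself gives no argument. But there is a genuine gap that affects both parts: you repeatedly use the claim that Proposition \ref{canonical} (ii) lets you write an element of $\mathbb{C}\mathcal{F}_S$ as a \emph{finite $\mathbb{C}$-linear combination} $\sum_i c_i f_i$ with $f_i\in\mathcal{F}_S$. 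That is false: $\mathbb{C}\mathcal{F}_S$ is the composite field, whose elements are quotients of such expressions (equivalently, $\mathbb{C}(j)$-linear combinations of a $\mathbb{Q}(j)$-basis of $\mathcal{F}_S$). For instance $1/(j-\pi)$ with $\pi$ transcendental lies in $\mathbb{C}\mathcal{F}_S$, but its pole sits over a transcendental value of $j$, while every finite $\mathbb{C}$-linear combination of elements of $\mathcal{F}_S$ has poles only over algebraic $j$-values (each $f_i$ is algebraic over $\mathbb{Q}(j)$). Consequently the "coefficient-comparison system" step in (i) is applied to a decomposition that need not exist. Part (i) is repairable without it: by your first inclusion $\mathcal{F}_S\subseteq F_{\mathbb{Q}}$, where $F_{\mathbb{Q}}$ denotes the field of modular functions for $\Gamma_S/\mathbb{Q}^\times$ with rational Fourier coefficients; both fields have constant field $\mathbb{Q}$ (for $\mathcal{F}_S$ this is Proposition \ref{canonical} (iii) with $k_S=\mathbb{Q}$), hence are linearly disjoint from $\mathbb{C}(j)$ over $\mathbb{Q}(j)$, and then $[F_{\mathbb{Q}}:\mathbb{Q}(j)]=[\mathbb{C}F_{\mathbb{Q}}:\mathbb{C}(j)]\le[\mathbb{C}\mathcal{F}_S:\mathbb{C}(j)]=[\mathcal{F}_S:\mathbb{Q}(j)]$ forces $F_{\mathbb{Q}}=\mathcal{F}_S$. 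As written, however, the reverse inclusion is not proved.

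In part (ii) you have correctly isolated the crux --- producing a single $h\in\mathcal{F}_S$ (not merely $\mathbb{C}\mathcal{F}_S$) regular at both points which separates them --- but neither of your proposed routes is carried out, and route (a) begs the question: why should a denominator $g$ clearing the poles and nonvanishing at $P,P'$ exist in $\mathcal{F}_S$ rather than just in $\mathbb{C}\mathcal{F}_S$? Route (b) is the right move, but "yielding the desired separating $h$ at once" hides the actual argument: the canonical model $V_S$ is a projective curve over $k_S$ with function field $\mathcal{F}_S$ and $V_S(\mathbb{C})\cong(\Gamma_S/\mathbb{Q}^\times)\backslash\mathbb{H}^*$; a $\mathbb{C}$-point is a closed point of $V_S$ together with an embedding of its residue field into $\mathbb{C}$ (the images of $\omega,\omega'$ are algebraic, being CM points). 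If the two images lie over distinct closed points, there is $h\in\mathcal{F}_S$ regular at one with a pole at the other, contradicting your finiteness hypothesis; if they lie over the same closed point via different embeddings, a $k_S$-generator of the residue field (realized as the value of some $h\in\mathcal{F}_S$ regular there) takes distinct conjugate values, contradicting equality of values. Only after this case analysis do you get that $\omega$ and $\omega'$ define the same point of $(\Gamma_S/\mathbb{Q}^\times)\backslash\mathbb{H}$, hence $\omega=\gamma(\omega')$ with $\gamma\in\Gamma_S/\mathbb{Q}^\times$. Until these details are supplied (or the statements are quoted from \cite{Shimura98} and \cite{J-K-S-Y223} as the paper does), the proposal is incomplete.
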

\begin{proof}
\begin{enumerate}
\item[(i)] See \cite[Theorem 26.4]{Shimura98}. 
\item[(ii)] See \cite[Lemma 7.1]{J-K-S-Y223} and \cite[Theorem 26.4]{Shimura98}. 
\end{enumerate}
\end{proof}

\begin{theorem}\label{generation}
We have $K_{\mathcal{O},\,G}=K\left(
h(\tau_\mathcal{O})~|~h\in\mathcal{F}_{\Gamma_G,\,\mathbb{Q}}~\textrm{is finite at}~\tau_\mathcal{O}\right)$. 
\end{theorem}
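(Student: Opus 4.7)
The plan is to apply Proposition \ref{canonical}(iv) with $\omega=\tau_\mathcal{O}$ to a suitably chosen open subgroup $S\subseteq\mathrm{GL}_2(\widehat{\mathbb{Q}})$ for which (a) the field of invariants $\mathcal{F}_S$ coincides with $\mathcal{F}_{\Gamma_G,\mathbb{Q}}$, and (b) the idele subgroup $K^\times q_{\tau_\mathcal{O}}^{-1}(S)$ coincides with the group $J_{\mathcal{O},G}$ of Lemma \ref{idelegroup}(ii). Once (a) and (b) are in place, Proposition \ref{canonical}(iv) identifies the field $K(h(\tau_\mathcal{O})\mid h\in\mathcal{F}_{\Gamma_G,\mathbb{Q}}\text{ finite at }\tau_\mathcal{O})$ with the abelian extension of $K$ corresponding to $J_{\mathcal{O},G}$, which by Proposition \ref{CFT} and Lemma \ref{idelegroup}(ii) is precisely $K_{\mathcal{O},G}$.

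For the construction, let $B_G\subseteq\mathrm{GL}_2(\mathbb{Z}/N\mathbb{Z})$ be the subgroup of upper triangular matrices $\bigl[\begin{smallmatrix} t & *\\ 0 & *\end{smallmatrix}\bigr]$ with $t+N\mathbb{Z}\in G$ (a subgroup because $G$ is), let $\widehat{\Gamma}_G\subset\mathrm{GL}_2(\widehat{\mathbb{Z}})$ denote its preimage under reduction modulo $N$, and set $S:=\mathbb{Q}^\times\widehat{\Gamma}_G$. Then $S$ is open in $\mathrm{GL}_2(\widehat{\mathbb{Q}})$, contains the scalars, and $S/\mathbb{Q}^\times$ is compact. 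Writing an arbitrary element of $S\cap\mathrm{GL}_2^+(\mathbb{Q})$ as $qu$ with $q\in\mathbb{Q}^\times$, $u\in\widehat{\Gamma}_G$, one finds $\det(u)\in\widehat{\mathbb{Z}}^\times\cap\mathbb{Q}^\times=\{\pm1\}$ and then $u\in M_2(\mathbb{Z})$, from which $\Gamma_S=\mathbb{Q}^\times\Gamma_G$ follows; thus $\Gamma_S/\mathbb{Q}^\times$ agrees with the image of $\Gamma_G$ in $\mathrm{PGL}_2^+(\mathbb{Q})$. Since $\widehat{\Gamma}_G$ visibly contains the diagonal subgroup $\{\bigl[\begin{smallmatrix}1&0\\0&d\end{smallmatrix}\bigr]\mid d\in\widehat{\mathbb{Z}}^\times\}$ (the $(1,1)$-entry is $1\in G$), Lemma \ref{kSQ}(i) yields $\mathcal{F}_S=\mathcal{F}_{\Gamma_G,\mathbb{Q}}$, establishing (a).

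For (b), use $\mathcal{O}=[\tau_\mathcal{O},1]$ to compute $q_{\tau_\mathcal{O}}$ in the basis $\{\tau_\mathcal{O},1\}$: if $\tau_\mathcal{O}$ satisfies $x^2+Bx+C=0$ with $B,C\in\mathbb{Z}$, then
\[
q_{\tau_\mathcal{O}}(a+b\tau_\mathcal{O})=\begin{bmatrix} a-bB & -bC\\ b & a\end{bmatrix}\qquad(a,b\in\mathbb{Q}),
\]
with the same formula extending $p$-adically. Since any rational scalar in $S$ can be absorbed into the $K^\times$-factor on the idele side, the equality $K^\times q_{\tau_\mathcal{O}}^{-1}(S)=J_{\mathcal{O},G}$ reduces to $q_{\tau_\mathcal{O}}^{-1}(\widehat{\Gamma}_G)=\bigcup_{t\in\mathbb{Z}_G}\prod_{p\mid N}(t+N\mathcal{O}_p)\times\prod_{p\nmid N}\mathcal{O}_p^\times$. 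Working locally, $q_{\tau_\mathcal{O}}(s_p)\in\mathrm{GL}_2(\mathbb{Z}_p)$ is equivalent to $s_p\in\mathcal{O}_p^\times$ for every $p$; for $p\mid N$, the vanishing mod $p^{v_p(N)}$ of the $(2,1)$-entry $b$ is equivalent to $s_p\in\mathbb{Z}_p+N\mathcal{O}_p$; and gluing across $p\mid N$ via the Chinese remainder theorem shows that the diagonal residues combine to an element of $G\subset(\mathbb{Z}/N\mathbb{Z})^\times$ precisely when some integer $t\in\mathbb{Z}_G$ satisfies $s_p\equiv t\Mod{N\mathcal{O}_p}$ simultaneously for every $p\mid N$.

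The main obstacle is exactly the last CRT-type step of (b): while the conditions at each individual prime $p\mid N$ are local (vanishing of a single entry and $p$-adic integrality), the condition that the diagonal entry belongs to $G$ is a genuinely global constraint obtained by gluing across the primes dividing $N$, so one must be careful to match the description of $\widehat{\Gamma}_G$ as a preimage under the surjection $\mathrm{GL}_2(\widehat{\mathbb{Z}})\twoheadrightarrow\mathrm{GL}_2(\mathbb{Z}/N\mathbb{Z})$ with the union-over-$t\in\mathbb{Z}_G$ shape of $J_{\mathcal{O},G}$. Once this is done, (a) and (b) combine with Proposition \ref{canonical}(iv) and Lemma \ref{idelegroup}(ii) to give the theorem.
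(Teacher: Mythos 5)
Your proposal is correct and follows essentially the same route as the paper: the same adelic group $S=\mathbb{Q}^\times W$ (your $\widehat{\Gamma}_G$ is exactly the paper's $W$), the identification $\Gamma_S=\mathbb{Q}^\times\Gamma_G$ and $\mathcal{F}_S=\mathcal{F}_{\Gamma_G,\,\mathbb{Q}}$ via Lemma \ref{kSQ} (i), the explicit matrix of $q_{\tau_\mathcal{O}}$ in the basis $\{\tau_\mathcal{O},\,1\}$, and the local-to-global matching of $K^\times q_{\tau_\mathcal{O}}^{-1}(S)$ with $J_{\mathcal{O},\,G}$ from Lemma \ref{idelegroup} (ii), concluded by Proposition \ref{canonical} (iv). The CRT gluing you flag as the main obstacle is handled the same way in the paper's computation, with a single $t\in\mathbb{Z}_G$ uniform over all $p\mid N$.
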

\begin{proof}
Set $S=\mathbb{Q}^\times W$ with
\begin{equation*}
W=\left\{
(\gamma_p)_p\in\prod_p\mathrm{GL}_2(\mathbb{Z}_p)~\Bigg|~
\begin{array}{l}\textrm{there is an element $t$ of $\mathbb{Z}_G$ such that}\\
\gamma_p\equiv
\begin{bmatrix}
t & \mathrm{*}\\
0& \mathrm{*}\end{bmatrix}\Mod{NM_2(\mathbb{Z}_p)}~\textrm{for all primes $p$}
\end{array}\right\}.
\end{equation*}
Here, one can readily verify that
$\Gamma_S=\mathbb{Q}^\times\Gamma_G$ and
$S\supseteq\left\{\begin{bmatrix}1&0\\0&d\end{bmatrix}~|~d\in\widehat{\mathbb{Z}}^\times\right\}$.
So we get that
$\Gamma_S/\mathbb{Q}^\times\simeq\langle\Gamma_G,\,-I_2\rangle/\langle-I_2\rangle$
and $k_S=\mathbb{Q}$, and hence
\begin{equation}\label{FF}
\mathcal{F}_S=\mathcal{F}_{\Gamma_G,\,\mathbb{Q}}
\end{equation}
by Proposition \ref{canonical} (ii) and Lemma \ref{kSQ} (i). 
\par
Let $\mathrm{irr}(\tau_\mathcal{O},\,\mathbb{Q})=x^2+b_\mathcal{O}x+c_\mathcal{O}$
($\in\mathbb{Z}[x]$), and so
\begin{equation*}
\tau_\mathcal{O}^2+b_\mathcal{O}\tau_\mathcal{O}+c_\mathcal{O}=0,\quad
\tau_\mathcal{O}+\overline{\tau}_\mathcal{O}=-b_\mathcal{O}\quad\textrm{and}\quad
\tau_\mathcal{O}\overline{\tau}_\mathcal{O}=c_\mathcal{O}.
\end{equation*}
Since $\mathcal{O}_p=\mathcal{O}\otimes_\mathbb{Z}\mathbb{Z}_p=
\mathbb{Z}_p\tau_\mathcal{O}+\mathbb{Z}_p$, we derive that
if $s=(s_p)_p$ is an element of $\widehat{K}^\times$ 
such that $s_p\in\mathcal{O}_p$ for every prime $p$, then 
$s_p=u_p\tau_\mathcal{O}+v_p$ for some $u_p,\,v_p\in\mathbb{Z}_p$ and
\begin{equation}\label{q(s)}
q_{\tau_\mathcal{O}}(s)=(\gamma_p)_p\quad\textrm{with}~
\gamma_p=\begin{bmatrix}v_p-b_\mathcal{O}u_p & -c_\mathcal{O}u_p\\
u_p & v_p\end{bmatrix}.
\end{equation}
We further observe that
\begin{equation}\label{inGL_2(Z_p)}
s_p\in\mathcal{O}_p^\times~\Longleftrightarrow~
\gamma_p\in\mathrm{GL}_2(\mathbb{Z}_p)~\Longleftrightarrow~
\det(\gamma_p)=(u_p\tau_\mathcal{O}+v_p)(u_p\overline{\tau}_\mathcal{O}+v_p)\in\mathbb{Z}_p^\times.
\end{equation}
Then we find that
\begin{eqnarray*}
&&K^\times q_{\tau_\mathcal{O}}^{-1}(S)\\&=&K^\times\left\{
s=(s_p)_p\in\widehat{K}^\times~|~q_{\tau_\mathcal{O}}(s)\in S=\mathbb{Q}^\times W\right\}\\
&=&K^\times\left\{
s=(s_p)_p\in\widehat{K}^\times~|~s_p\in\mathcal{O}_p~\textrm{and}~q_{\tau_\mathcal{O}}(s)\in W\right\}\\
&&\hspace{4cm}\textrm{by (\ref{identify}) and the fact}~q_{\tau_\mathcal{O}}(r)=rI_2~\textrm{for every}~r\in\mathbb{Q}^\times~
\textrm{from (\ref{qwrelation})}\\
&=&K^\times\left\{
s=(s_p)_p\in\widehat{K}^\times~\Bigg|~
\begin{array}{l}
s_p=u_p\tau_\mathcal{O}+v_p~\textrm{with}~u_p,\,v_p\in\mathbb{Z}_p~\textrm{such that}\\
\gamma_p=\left[\begin{smallmatrix}v_p-b_\mathcal{O}u_p & -c_\mathcal{O}u_p\\
u_p & v_p\end{smallmatrix}\right]\in W\end{array}
\right\}\quad\textrm{by (\ref{q(s)})}\\
&=&\bigcup_{t\in\mathbb{Z}_G}K^\times\left\{
s=(s_p)_p\in\widehat{K}^\times~\Bigg|~\begin{array}{l}s_p=u_p\tau_\mathcal{O}+v_p~\textrm{with}~u_p,\,v_p\in\mathbb{Z}_p~
\textrm{such that}\\
\gamma_p=\left[\begin{smallmatrix}v_p-b_\mathcal{O}u_p & -c_\mathcal{O}u_p\\
u_p & v_p\end{smallmatrix}\right]\in\mathrm{GL}_2(\mathbb{Z}_p)~\textrm{and}~
\gamma_p\equiv
\left[\begin{smallmatrix}t&\mathrm{*}\\0&\mathrm{*}\end{smallmatrix}\right]\Mod{NM_2(\mathbb{Z}_p)}
\end{array}
\right\}\\
&=&\bigcup_{t\in\mathbb{Z}_G}K^\times\left\{
s=(s_p)_p\in\widehat{K}^\times~\Bigg|~\begin{array}{l}s_p=u_p\tau_\mathcal{O}+v_p~\textrm{with}~u_p,\,v_p\in\mathbb{Z}_p~
\textrm{such that}\\
s_p\in\mathcal{O}_p^\times,\,
u_p\equiv0\Mod{N\mathbb{Z}_p}~\textrm{and}~v_p\equiv t\Mod{N\mathbb{Z}_p}
\end{array}
\right\}
\quad\textrm{by (\ref{inGL_2(Z_p)})}\\
&=&\bigcup_{t\in\mathbb{Z}_G}K^\times\left\{
\prod_{p\,|\,N}(t+N\mathcal{O}_p)\times
\prod_{p\,\nmid\,N}\mathcal{O}_p^\times\right\}.
\end{eqnarray*}
Therefore, we conclude by Lemma \ref{idelegroup} (ii), Proposition \ref{canonical} (iv) and (\ref{FF}) and  that
\begin{equation*}
K_{\mathcal{O},\,G}=K\left(
h(\tau_\mathcal{O})~|~h\in\mathcal{F}_{\Gamma_G,\,\mathbb{Q}}~\textrm{is finite at}~\tau_\mathcal{O}\right).
\end{equation*}
\end{proof}

\begin{remark}
Theorem \ref{generation} for the case $G=\{1+N\mathbb{Z}\}$
was given in \cite[Theorems 4 and 5]{Cho}. 
\end{remark}

\section {Form class groups of level $N$}

We shall introduce the notion of form class groups of level $N$ 
in view of $\mathcal{O}$-ideal class groups.
\par
The modular group $\mathrm{SL}_2(\mathbb{Z})$ acts on the set
\begin{equation*}
\mathcal{Q}(D_\mathcal{O})=\{ax^2+bxy+cy^2\in\mathbb{Z}[x,\,y]~|~
\gcd(a,\,b,\,c)=1,~b^2-4ac=D_\mathcal{O},~a>0\}
\end{equation*}
from the right by
\begin{equation*}
Q^\gamma=Q\left(\begin{bmatrix}x\\y\end{bmatrix}\right)^\gamma=
Q\left(\gamma\begin{bmatrix}x\\y\end{bmatrix}\right)\quad(Q\in\mathcal{Q}(D_\mathcal{O}),~\gamma\in\mathrm{SL}_2(\mathbb{Z})). 
\end{equation*}
Let $\Gamma$ be a congruence subgroup of $\mathrm{SL}_2(\mathbb{Z})$ of level $N$, that is, 
$\Gamma $ is a subgroup of $\mathrm{SL}_2(\mathbb{Z})$ containing $\Gamma(N)$.
We then obtain an equivalence relation $\sim_\Gamma$ on the set 
\begin{equation*}
\mathcal{Q}(D_\mathcal{O},\,N)=\{ax^2+bxy+cy^2\in\mathcal{Q}(D_\mathcal{O})~|~
\gcd(a,\,N)=1\}
\end{equation*}
as follows\,:
for $Q,\,Q'\in\mathcal{Q}(D_\mathcal{O},\,N)$ 
\begin{equation*}
Q\sim_\Gamma Q'\quad\Longleftrightarrow\quad Q'=Q^\gamma~\textrm{for some}~\gamma\in\Gamma. 
\end{equation*}
Here, $Q^\gamma$ means the action of $\gamma\in\mathrm{SL}_2(\mathbb{Z})$ on the binary quadratic form $Q\in\mathcal{Q}(D_\mathcal{O})$. 
Denote the set of equivalence classes by $\mathcal{C}_\Gamma(D_\mathcal{O},\,N)$, namely,
\begin{equation*}
\mathcal{C}_\Gamma(D_\mathcal{O},\,N)=\mathcal{Q}(D_\mathcal{O},\,N)/\sim_\Gamma.
\end{equation*}
For $Q=ax^2+bxy+cy^2\in\mathcal{Q}(D_\mathcal{O},\,N)$, let $\omega_Q$ be the element of $\mathbb{H}$ defined in (\ref{w_Q}). 
In particular, if we let $Q_0=x^2+b_\mathcal{O}xy+c_\mathcal{O}y^2$ be the principal form in $\mathcal{Q}(D_\mathcal{O},\,N)$,
then we see that
\begin{equation}\label{wtO}
\omega_{Q_0}=\tau_\mathcal{O}\quad\textrm{and}\quad
[\omega_{Q_0},\,1]=\mathcal{O}. 
\end{equation}

\begin{lemma}\label{ION}
The lattice $[\omega_Q,\,1]=\mathbb{Z}\omega_Q+\mathbb{Z}$ in $\mathbb{C}$ belongs to $I(\mathcal{O},\,N)$. 
\end{lemma}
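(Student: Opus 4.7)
The plan is to exhibit the fractional ideal $L := [\omega_Q,\,1]$ as a quotient $\mathfrak{a}\cdot(a\mathcal{O})^{-1}$ of two integral $\mathcal{O}$-ideals, each of which is prime to $N$. Clearing the denominator in $\omega_Q = (-b+\sqrt{D_\mathcal{O}})/(2a)$, I set
\[
\mathfrak{a} := aL = \mathbb{Z}\,\tfrac{-b+\sqrt{D_\mathcal{O}}}{2} + \mathbb{Z}\,a,
\]
which is visibly a $\mathbb{Z}$-submodule of $\mathcal{O}_K$ of rank $2$, and it then suffices to prove $\mathfrak{a},\,a\mathcal{O} \in \mathcal{M}(\mathcal{O},\,N)$.

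The main step is to check that $\mathfrak{a}$ is an integral \emph{proper} $\mathcal{O}$-ideal of norm $a$. Using the relation
\[
\left(\tfrac{-b+\sqrt{D_\mathcal{O}}}{2}\right)^{\!2} + b\cdot\tfrac{-b+\sqrt{D_\mathcal{O}}}{2} + ac = 0
\]
coming from $b^2-4ac = D_\mathcal{O}$, one verifies directly on the basis $\{a,\,(-b+\sqrt{D_\mathcal{O}})/2\}$ that $\mathfrak{a}$ is stable under multiplication by $\tau_\mathcal{O}$, hence is an $\mathcal{O}$-ideal; the primitivity hypothesis $\gcd(a,b,c)=1$ then forces the order of $\mathfrak{a}$ to be exactly $\mathcal{O}$. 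This is essentially \cite[Theorem 7.7]{Cox} adapted to arbitrary imaginary quadratic orders (the same argument works verbatim). The same basis computation yields $\mathrm{N}_\mathcal{O}(\mathfrak{a}) = |\mathcal{O}/\mathfrak{a}| = a$. This is really the only delicate point: without the primitivity of $Q$ the order of $\mathfrak{a}$ could strictly contain $\mathcal{O}$, which is precisely why primitivity is built into the definition of $\mathcal{Q}(D_\mathcal{O},\,N)$.

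Once $\mathfrak{a}$ is a proper $\mathcal{O}$-ideal of norm $a$, the hypothesis $\gcd(a,\,N)=1$ combined with Lemma \ref{basic}(i) yields $\mathfrak{a} \in \mathcal{M}(\mathcal{O},\,N)$ immediately. For the denominator, Lemma \ref{basic}(ii) gives $\mathrm{N}_\mathcal{O}(a\mathcal{O}) = \mathrm{N}_{K/\mathbb{Q}}(a) = a^2$, which is again coprime to $N$, so $a\mathcal{O} \in \mathcal{M}(\mathcal{O},\,N)$ by Lemma \ref{basic}(i) as well. Consequently $L = \mathfrak{a}\,(a\mathcal{O})^{-1} \in I(\mathcal{O},\,N)$, as required.
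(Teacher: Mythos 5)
Your argument is correct. The paper itself does not prove this lemma; it simply cites \cite[Lemma 9.1]{J-K-S-Y23}, so your proposal supplies a self-contained proof of exactly the standard kind: writing $[\omega_Q,1]=\mathfrak{a}(a\mathcal{O})^{-1}$ with $\mathfrak{a}=[a,\tfrac{-b+\sqrt{D_\mathcal{O}}}{2}]$ a proper integral $\mathcal{O}$-ideal of norm $a$, and then invoking Lemma \ref{basic}(i) together with $\gcd(a,N)=1$ to place both $\mathfrak{a}$ and $a\mathcal{O}$ in $\mathcal{M}(\mathcal{O},N)$. Two small remarks. First, \cite[Theorem 7.7]{Cox} (and the underlying Lemma 7.5 there) is already stated for an arbitrary order in an imaginary quadratic field, so no adaptation is needed; it directly gives that $[\omega_Q,1]$ is a proper fractional $\mathcal{O}$-ideal and that $\mathfrak{a}=a[\omega_Q,1]$ is proper and integral, with the index computation $|\mathcal{O}/\mathfrak{a}|=a$ following from $\mathcal{O}=[1,\tfrac{-b+\sqrt{D_\mathcal{O}}}{2}]$ (which uses $b\equiv D_\mathcal{O}\Mod{2}$, worth stating explicitly since it is also what makes $\tfrac{-b+\sqrt{D_\mathcal{O}}}{2}$ lie in $\mathcal{O}$). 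Second, the degenerate case $a=1$ makes $\mathfrak{a}=a\mathcal{O}=\mathcal{O}$, which is excluded from $\mathcal{M}(\mathcal{O},N)$ as stated (``nontrivial''), but then $[\omega_Q,1]=\mathcal{O}$ is the identity of the group $I(\mathcal{O},N)$, so the conclusion still holds; a one-line remark covering this case would make the proof airtight.
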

\begin{proof}
See \cite[Lemma 9.1]{J-K-S-Y23}. 
\end{proof}

Let  $\Gamma_1(N)$ be the congruence subgroup of $\mathrm{SL}_2(\mathbb{Z})$ defined by
\begin{equation*}
\Gamma_1(N)=\left\{\gamma\in\mathrm{SL}_2(\mathbb{Z})~|~\gamma\equiv\begin{bmatrix}
1&\mathrm{*}\\0&1\end{bmatrix}\Mod{NM_2(\mathbb{Z})}\right\}. 
\end{equation*}
Note that
$\Gamma_{\{1+N\mathbb{Z}\}}=\Gamma_1(N)$ and 
 $\Gamma_1(N)=\langle\Gamma(N),\,T\rangle$ ($\leq\mathrm{SL}_2(\mathbb{Z})$), where 
$T=\begin{bmatrix}1&1\\0&1\end{bmatrix}$. 
Furthermore, we observe that if $Q\in\mathcal{Q}(D_\mathcal{O},\,N)$, then
\begin{equation*}
[\omega_{Q^T},\,1]=[T^{-1}(\omega_Q),\,1]=[\omega_Q-1,\,1]=[\omega_Q,\,1]. 
\end{equation*}

\begin{proposition}\label{Gamma_1(N)}
One can give the set $\mathcal{C}_{\Gamma_1(N)}(D_\mathcal{O},\,N)$
a unique group structure so that
the mapping
\begin{eqnarray*}
\psi_{\Gamma_1(N),\,P_{\{1+N\mathbb{Z}\}}(\mathcal{O},\,N)}~:~\mathcal{C}_{\Gamma_1(N)}(D_\mathcal{O},\,N)&\rightarrow&
I(\mathcal{O},\,N)/P_{\{1+N\mathbb{Z}\}}(\mathcal{O},\,N)\\
\mathrm{[}Q] &\mapsto& [[\omega_Q,\,1]]
\end{eqnarray*}
becomes a well-defined isomorphism. 
\end{proposition}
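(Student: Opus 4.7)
The plan is to establish that $\psi := \psi_{\Gamma_1(N),\,P_{\{1+N\mathbb{Z}\}}(\mathcal{O},N)}$ is a well-defined bijection; the unique group structure on $\mathcal{C}_{\Gamma_1(N)}(D_\mathcal{O},N)$ then follows automatically by transporting the abelian group structure of $I(\mathcal{O},N)/P_{\{1+N\mathbb{Z}\}}(\mathcal{O},N)$ across $\psi$. That the target is legitimate is already guaranteed by Lemma \ref{ION}. Since the case $G = \{1+N\mathbb{Z}\}$ is precisely what was handled in the authors' prior paper \cite{J-K-S-Y23}, the cleanest route is to cite that reference; below I sketch the internal mechanism I would use to verify the claim from scratch.

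For well-definedness, I suppose $Q' = Q^\gamma$ with $\gamma = \begin{bmatrix} p & q \\ r & s \end{bmatrix} \in \Gamma_1(N)$. A direct computation using the M\"obius action of $\mathrm{SL}_2(\mathbb{Z})$ yields $\omega_{Q'} = \gamma^{-1}(\omega_Q)$, and consequently $[\omega_{Q'},\,1] = (p - r\omega_Q)^{-1}[\omega_Q,\,1]$ as fractional $\mathcal{O}$-ideals. It therefore suffices to show that $(p - r\omega_Q)\mathcal{O}$ lies in $P_{\{1+N\mathbb{Z}\}}(\mathcal{O},\,N)$. Since $p - r\omega_Q$ is generally not itself an element of $\mathcal{O}$, I would clear denominators using $\gcd(a,N) = 1$ together with the containment $a\omega_Q \in \mathcal{O}$: setting $\nu = a(p - r\omega_Q) \in \mathcal{O}$, the congruences $p \equiv 1$ and $r \equiv 0 \Mod{N}$ force $\nu \equiv a \Mod{N\mathcal{O}}$. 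Choosing an integer $a'$ with $aa' \equiv 1 \Mod{N}$ and $\gcd(a', N) = 1$, one obtains $a'\nu \in \mathcal{O}$ with $a'\nu \equiv 1 \Mod{N\mathcal{O}}$, so the identity $(p - r\omega_Q)\mathcal{O} = (a'\nu)\mathcal{O}\cdot(aa'\mathcal{O})^{-1}$ exhibits it as a ratio of two generators of $P_{\{1+N\mathbb{Z}\}}(\mathcal{O},\,N)$.

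For injectivity, if $[\omega_Q,\,1]$ and $[\omega_{Q'},\,1]$ represent the same class, pick $\nu \in \mathcal{O}$ with $\nu \equiv 1 \Mod{N\mathcal{O}}$, $\nu\mathcal{O}$ prime to $N$, and $\nu[\omega_{Q'},\,1] = [\omega_Q,\,1]$. Expanding $\nu\omega_{Q'}$ and $\nu$ in the $\mathbb{Z}$-basis $\{\omega_Q,\,1\}$ produces a matrix $\gamma \in \mathrm{GL}_2(\mathbb{Z})$ with $\omega_{Q'} = \gamma^{-1}(\omega_Q)$; orientation forces $\det\gamma = 1$, and the congruence $\nu \equiv 1 \Mod{N\mathcal{O}}$ pins down the relevant entries of $\gamma$ modulo $N$ to place $\gamma \in \Gamma_1(N)$, whence $Q^\gamma = Q'$. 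For surjectivity, an arbitrary class is represented by an integral proper $\mathcal{O}$-ideal $\mathfrak{a}$ prime to $N$, which one writes as $\mathfrak{a} = \beta[\omega,\,1]$ with $\omega \in K\cap\mathbb{H}$ satisfying a primitive integer quadratic of discriminant $D_\mathcal{O}$ whose leading coefficient is coprime to $N$ (via Lemma \ref{basic}, since $\mathrm{N}_\mathcal{O}(\mathfrak{a})$ is coprime to $N$); after adjusting $\beta$ so that $\beta\mathcal{O} \in P_{\{1+N\mathbb{Z}\}}(\mathcal{O},\,N)$, the class of $\mathfrak{a}$ is exactly $\psi([Q])$. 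The main obstacle I expect is precisely the denominator-clearing step in well-definedness: the naive scalar $p - r\omega_Q$ is not an element of $\mathcal{O}$, and realizing $(p - r\omega_Q)\mathcal{O}$ as a ratio of two $\mathcal{O}$-principal ideals both lying in $P_{\{1+N\mathbb{Z}\}}(\mathcal{O},\,N)$ requires delicate use of $\gcd(a,N) = 1$ and $a\omega_Q \in \mathcal{O}$.
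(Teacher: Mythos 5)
Your proposal is correct and takes essentially the same route as the paper, whose entire proof of Proposition \ref{Gamma_1(N)} is the citation to \cite[Definition 5.7 and Proposition 9.3]{J-K-S-Y23} that you identify as the cleanest option. Your supplementary sketch (transporting the group structure across the bijection, the computation $[\omega_{Q^\gamma},\,1]=j(\gamma^{-1},\,\omega_Q)^{-1}[\omega_Q,\,1]$, and the denominator-clearing via $a\omega_Q\in\mathcal{O}$ and $aa'\equiv1\Mod{N}$) is consistent with the form--ideal dictionary underlying that reference, so no further comment is needed.
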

\begin{proof}
See \cite[Definition 5.7 and Proposition 9.3]{J-K-S-Y23}. 
\end{proof}

\begin{definition}\label{induce}
We say that
\textit{$\Gamma$ induces a form class group of level $N$} if
\begin{enumerate}
\item[(i)] $\Gamma$ contains $\Gamma_1(N)$, 
\item[(ii)] there is a subgroup $P$ of $P(\mathcal{O},\,N)$ containing $P_{\{1+N\mathbb{Z}\}}(\mathcal{O},\,N)$
in order for the mapping
\begin{eqnarray*}
\psi_{\Gamma,\,P}~:~\mathcal{C}_\Gamma(D_\mathcal{O},\,N) &\rightarrow& I(\mathcal{O},\,N)/P\\
\mathrm{[}Q] &\mapsto& [[\omega_Q,\,1]] 
\end{eqnarray*}
to be a well-defined bijection.
\end{enumerate}
In this case, we regard the set $\mathcal{C}_\Gamma(D_\mathcal{O},\,N)$ as a group isomorphic to the quotient group 
$I(\mathcal{O},\,N)/P$, and call it a \textit{form class group of level $N$}.  
\end{definition}

\begin{remark}\label{Q0}
Suppose that $\Gamma$ induces a form class group of level $N$. 
\begin{enumerate}
\item[(i)]
Note by (\ref{wtO}) that the identity element of the form class group $\mathcal{C}_\Gamma(D_\mathcal{O},\,N)$ is $[Q_0]$. 
\item[(ii)] The natural surjection $\mathcal{C}_{\Gamma_1(N)}(D_\mathcal{O},\,N)
\rightarrow\mathcal{C}_\Gamma(D_\mathcal{O},\,N)$ is indeed a group homomorphism
by the commutative diagram in Figure \ref{Figure02}.
\begin{figure}[h]
\begin{equation*}
\xymatrixcolsep{12pc}
\xymatrix{
\mathcal{C}_{\Gamma_1(N)}(D_\mathcal{O},\,N) \ar@{->}[r]_{\psi_{\Gamma_1(N),\,P_{\{1+N\mathbb{Z}\}}(\mathcal{O},\,N)}}^{\sim}
\ar@{->>}[dd]_{\textrm{natural surjection}}
 & I(\mathcal{O},\,N)/P_{\{1+N\mathbb{Z}\}}(\mathcal{O},\,N) \ar@{->>}[dd]^{\textrm{natural homomorphism}} \\\\
\mathcal{C}_\Gamma(D_\mathcal{O},\,N) \ar@{->}[r]^{\sim}_{\psi_{\Gamma,\,P}} & I(\mathcal{O},\,N)/P
}
\end{equation*}
\caption{A commutative diagram showing that the natural surjection is a homomorphism}
\label{Figure02}
\end{figure}
\end{enumerate}
\end{remark}

For $\gamma=\begin{bmatrix}a&b\\c&d\end{bmatrix}\in\mathrm{SL}_2(\mathbb{Z})$
and $\tau\in\mathbb{H}$, we denote by
\begin{equation*}
j(\gamma,\,\tau)=c\tau+d.
\end{equation*}
Define the subgroup $P_\Gamma$ of $P(\mathcal{O})$ by
\begin{equation*}
P_\Gamma=\langle j(\gamma^{-1},\,\omega_Q)\mathcal{O}~|~Q\in\mathcal{Q}(D_\mathcal{O},\,N)~
\textrm{and}~\gamma\in\Gamma~\textrm{such that}~Q^\gamma\in\mathcal{Q}(D_\mathcal{O},\,N)\rangle.
\end{equation*}
Since
\begin{equation*}
[\omega_{Q^\gamma},\,1]=[\gamma^{-1}(\omega_Q),\,1]
=\frac{1}{j(\gamma^{-1},\,\omega_Q)}[\omega_Q,\,1],
\end{equation*}
$P_\Gamma$ is a subgroup of $P(\mathcal{O},\,N)$ by Lemma \ref{ION}. 

\begin{lemma}\label{well-defined}
When $P$ is a subgroup of $P(\mathcal{O},\,N)$, we see that the mapping 
\begin{eqnarray*}
\psi_{\Gamma,\,P}~:~\mathcal{C}_\Gamma(D_\mathcal{O},\,N) & \rightarrow & I(\mathcal{O},\,N)/P\\
\mathrm{[}Q] & \mapsto & [[\omega_Q,\,1]]
\end{eqnarray*}
is well defined if and only if $P$ contains $P_\Gamma$. 
\end{lemma}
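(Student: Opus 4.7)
The plan is to read off the iff directly from the transformation law already stated in the excerpt, namely
\begin{equation*}
[\omega_{Q^\gamma},\,1]=\frac{1}{j(\gamma^{-1},\,\omega_Q)}[\omega_Q,\,1],
\end{equation*}
which expresses that when we move from $Q$ to an equivalent form $Q^\gamma$ with $\gamma\in\Gamma$, the attached lattice changes by multiplication by the scalar $j(\gamma^{-1},\omega_Q)^{-1}\in K^\times$. Combined with Lemma~\ref{ION}, this lets us see that the only obstruction to $[[\omega_Q,1]]$ being independent of the choice of representative is precisely whether the principal fractional ideals $j(\gamma^{-1},\omega_Q)\mathcal{O}$ lie in $P$.

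For the ``if'' direction, I would assume $P\supseteq P_\Gamma$ and take $Q,Q'\in\mathcal{Q}(D_\mathcal{O},N)$ with $Q\sim_\Gamma Q'$; by definition $Q'=Q^\gamma$ for some $\gamma\in\Gamma$, and since both $Q$ and $Q^\gamma$ lie in $\mathcal{Q}(D_\mathcal{O},N)$, the element $j(\gamma^{-1},\omega_Q)\mathcal{O}$ is by construction a generator of $P_\Gamma$, hence lies in $P$. The displayed transformation formula then gives $[\omega_{Q'},1]\equiv[\omega_Q,1]$ modulo $P$, so $\psi_{\Gamma,P}$ is well defined.

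For the ``only if'' direction, I would suppose $\psi_{\Gamma,P}$ is well defined and verify that each generator of $P_\Gamma$ lies in $P$. Take such a generator $j(\gamma^{-1},\omega_Q)\mathcal{O}$, with $Q\in\mathcal{Q}(D_\mathcal{O},N)$ and $\gamma\in\Gamma$ such that $Q^\gamma\in\mathcal{Q}(D_\mathcal{O},N)$ as well. Then $Q\sim_\Gamma Q^\gamma$, so well-definedness forces $[[\omega_Q,1]]=[[\omega_{Q^\gamma},1]]$ in $I(\mathcal{O},N)/P$; invoking the transformation formula once more, this equality rearranges to $j(\gamma^{-1},\omega_Q)\mathcal{O}\in P$. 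Since $P_\Gamma$ is generated by such elements, $P_\Gamma\subseteq P$.

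The only subtle point, and the step I would treat most carefully, is the verification that the quantities $j(\gamma^{-1},\omega_Q)\mathcal{O}$ actually make sense as elements of the quotient $I(\mathcal{O},N)/P$: one needs both $[\omega_Q,1]$ and $[\omega_{Q^\gamma},1]$ to be $\mathcal{O}$-ideals prime to $N$ so that the ratio lies in $P(\mathcal{O},N)$, and this is guaranteed by Lemma~\ref{ION} applied to both $Q$ and $Q^\gamma$ (using the assumption $\gcd(a,N)=1$ for both). Everything else is bookkeeping with the formula already recorded before the lemma statement, so no genuine obstacle arises.
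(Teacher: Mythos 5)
Your argument is correct and is essentially the paper's own proof: both directions are read off from the identity $[\omega_{Q^\gamma},\,1]=j(\gamma^{-1},\,\omega_Q)^{-1}[\omega_Q,\,1]$, with Lemma \ref{ION} guaranteeing the lattices lie in $I(\mathcal{O},\,N)$ so that the scalars give elements of $P(\mathcal{O},\,N)$. No changes are needed.
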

\begin{proof}
Assume that $\psi_{\Gamma,\,P}$ is well defined. 
Let $Q\in\mathcal{Q}(D_\mathcal{O},\,N)$ and $\gamma\in\Gamma$ such that $Q^\gamma\in\mathcal{Q}(D_\mathcal{O},\,N)$. Since
$[Q]=[Q^\gamma]$ in $\mathcal{C}_\Gamma(D_\mathcal{O},\,N)$, we claim that in the quotient group $I(\mathcal{O},\,N)/P$
\begin{equation*}
[[\omega_Q,\,1]]=[[\omega_{Q^\gamma},\,1]]=[[\gamma^{-1}(\omega_Q),\,1]]
=\left[\frac{1}{j(\gamma^{-1},\,\omega_Q)}[\omega_Q,\,1]\right].
\end{equation*}
This implies that $j(\gamma^{-1},\,\omega_Q)\mathcal{O}\in P$, and hence $P$ contains $P_\Gamma$. 
\par
Conversely, assume that $P$ contains $P_\Gamma$. Let $Q,\,Q'\in\mathcal{Q}(D_\mathcal{O},\,N)$ such that
$[Q]=[Q']$ in $\mathcal{C}_\Gamma(D_\mathcal{O},\,N)$, and so $Q'=Q^\gamma$ for some $\gamma\in\Gamma$. 
Since $j(\gamma^{-1},\,\omega_Q)\mathcal{O}\in P_\Gamma\subseteq P$, we obtain that in 
the quotient group
$I(\mathcal{O},\,N)/P$
\begin{equation*}
[[\omega_{Q'},\,1]]=[[\omega_{Q^\gamma},\,1]]=[[\gamma^{-1}(\omega_Q),\,1]]
=\left[\frac{1}{j(\gamma^{-1},\,\omega_Q)}[\omega_Q,\,1]\right]=[[\omega_Q,\,1]].
\end{equation*}
Thus $\psi_{\Gamma,\,P}$ is well defined. 
\end{proof}

For a subgroup $P$ of $P(\mathcal{O},\,N)$ containing $P_\Gamma$, 
let $\psi_{\Gamma,\,P}$ be the well-defined map stated in Lemma \ref{well-defined}. 
Let
$\widetilde{P}_\Gamma$ be the subgroup of $P(\mathcal{O},\,N)$ given by
\begin{equation*}
\widetilde{P}_\Gamma=P_\Gamma P_{\{1+N\mathbb{Z}\}}(\mathcal{O},\,N).
\end{equation*}

\begin{proposition}\label{theoretical}
If $\Gamma$ is a subgroup of $\mathrm{SL}_2(\mathbb{Z})$ containing $\Gamma_1(N)$, then 
\begin{equation*}
\textrm{$\Gamma$ induces a form class group of level $N$}\quad\Longleftrightarrow\quad
\textrm{$\psi_{\Gamma,\,\widetilde{P}_\Gamma}$ is injective}.
\end{equation*}
In this case, $\widetilde{P}_\Gamma$ is a unique subgroup $P$ of $P(\mathcal{O},\,N)$ satisfying the condition 
\textup{(ii)} of \textup{Definition \ref{induce}}. 
\end{proposition}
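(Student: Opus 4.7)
My plan is to reduce the proposition to a short diagram chase built around the isomorphism $\psi_{\Gamma_1(N),\,P_{\{1+N\mathbb{Z}\}}(\mathcal{O},\,N)}$ supplied by Proposition \ref{Gamma_1(N)} and the well-definedness criterion of Lemma \ref{well-defined}. I first note that because $\widetilde{P}_\Gamma = P_\Gamma P_{\{1+N\mathbb{Z}\}}(\mathcal{O},\,N)$ contains $P_\Gamma$, Lemma \ref{well-defined} already guarantees that $\psi_{\Gamma,\,\widetilde{P}_\Gamma}$ is well defined; so only its bijectivity is at issue.

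The key preliminary step is to show that $\psi_{\Gamma,\,\widetilde{P}_\Gamma}$ is automatically \emph{surjective}, with no hypothesis beyond $\Gamma \supseteq \Gamma_1(N)$. I would assemble the commutative square whose top row is the natural surjection $\mathcal{C}_{\Gamma_1(N)}(D_\mathcal{O},\,N) \twoheadrightarrow \mathcal{C}_\Gamma(D_\mathcal{O},\,N)$ (surjective since passing from $\Gamma_1(N)$ to $\Gamma$ only coarsens the equivalence), whose bottom row is the natural surjection $I(\mathcal{O},\,N)/P_{\{1+N\mathbb{Z}\}}(\mathcal{O},\,N) \twoheadrightarrow I(\mathcal{O},\,N)/\widetilde{P}_\Gamma$, whose left vertical arrow is the isomorphism of Proposition \ref{Gamma_1(N)}, and whose right vertical arrow is $\psi_{\Gamma,\,\widetilde{P}_\Gamma}$; commutativity is immediate from the common formula $[Q] \mapsto [[\omega_Q,\,1]]$. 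Going down then right is a composition of a bijection and a surjection, hence surjective, which forces the composition $\psi_{\Gamma,\,\widetilde{P}_\Gamma} \circ (\textrm{top surjection})$ to be surjective, and thus $\psi_{\Gamma,\,\widetilde{P}_\Gamma}$ itself is surjective.

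With surjectivity secured, the stated equivalence becomes routine. If $\psi_{\Gamma,\,\widetilde{P}_\Gamma}$ is injective then it is a bijection, so taking $P = \widetilde{P}_\Gamma$ fulfils condition (ii) of Definition \ref{induce} and $\Gamma$ induces a form class group of level $N$. Conversely, if $\Gamma$ induces a form class group via some subgroup $P$, then well-definedness of $\psi_{\Gamma,\,P}$ yields $P \supseteq P_\Gamma$ by Lemma \ref{well-defined}, and by hypothesis $P \supseteq P_{\{1+N\mathbb{Z}\}}(\mathcal{O},\,N)$, whence $P \supseteq \widetilde{P}_\Gamma$. Factoring $\psi_{\Gamma,\,P} = q \circ \psi_{\Gamma,\,\widetilde{P}_\Gamma}$ through the natural quotient $q : I(\mathcal{O},\,N)/\widetilde{P}_\Gamma \twoheadrightarrow I(\mathcal{O},\,N)/P$, injectivity of the composition forces injectivity of $\psi_{\Gamma,\,\widetilde{P}_\Gamma}$.

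For the uniqueness clause I would recycle the same factorisation. Once $\psi_{\Gamma,\,\widetilde{P}_\Gamma}$ is known to be bijective, the identity $q = \psi_{\Gamma,\,P} \circ \psi_{\Gamma,\,\widetilde{P}_\Gamma}^{-1}$ exhibits $q$ as a composition of two bijections, so $q$ is bijective; but the kernel of $q$ is $P/\widetilde{P}_\Gamma$, so $P = \widetilde{P}_\Gamma$. I do not expect any genuine obstacle here, as the argument is purely formal once the two earlier inputs are in hand; the only conceptual point worth flagging is the recognition that $\widetilde{P}_\Gamma$ is manifestly the smallest subgroup of $P(\mathcal{O},\,N)$ that could satisfy both halves of Definition \ref{induce}(ii), which is why its behaviour controls both the equivalence and the uniqueness in a single stroke.
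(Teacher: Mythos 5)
Your proposal is correct and follows essentially the same route as the paper: surjectivity of $\psi_{\Gamma,\,\widetilde{P}_\Gamma}$ via the commutative square built on the isomorphism of Proposition \ref{Gamma_1(N)} (the paper's Figure \ref{Figure03}), and then the equivalence and uniqueness via the factorization $\psi_{\Gamma,\,P}=q\circ\psi_{\Gamma,\,\widetilde{P}_\Gamma}$ through the natural quotient, using Lemma \ref{well-defined} to get $P\supseteq\widetilde{P}_\Gamma$ (the paper's Figure \ref{Figure04}). No gaps.
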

\begin{proof}
Note that
$\psi_{\Gamma,\,\widetilde{P}_\Gamma}$ is surjective
by the commutative diagram in Figure \ref{Figure03} which is derived from Proposition \ref{Gamma_1(N)} and Lemma \ref{well-defined}. 
\begin{figure}[h]
\begin{equation*}
\xymatrixcolsep{12pc}
\xymatrix{
\mathcal{C}_{\Gamma_1(N)}(D_\mathcal{O},\,N) \ar@{->}[r]_{\psi_{\Gamma_1(N),\,P_{\{1+N\mathbb{Z}\}}(\mathcal{O},\,N)}}^{\sim}
\ar@{->>}[dd]_{\textrm{natural}}
 & I(\mathcal{O},\,N)/P_{\{1+N\mathbb{Z}\}}(\mathcal{O},\,N) \ar@{->>}[dd]^{\textrm{natural}} \\\\
\mathcal{C}_\Gamma(D_\mathcal{O},\,N) \ar@{->}[r]_{\psi_{\Gamma,\,\widetilde{P}_\Gamma}} & I(\mathcal{O},\,N)/\widetilde{P}_\Gamma
}
\end{equation*}
\caption{A commutative diagram for surjectivity of $\psi_{\Gamma,\,\widetilde{P}_\Gamma}$}
\label{Figure03}
\end{figure}
\par
Assume that $\Gamma$ induces a form class group of level $N$.
By Definition \ref{induce} and Lemma \ref{well-defined}, 
there exists a subgroup $P$ of $P(\mathcal{O},\,N)$ containing $\widetilde{P}_\Gamma$
for which $\psi_{\Gamma,\,P}$ is bijective. 
We then achieve from the commutative diagram in Figure \ref{Figure04} that
$\psi_{\Gamma,\,\widetilde{P}_\Gamma}$ is injective and 
 $P=\widetilde{P}_\Gamma$. 
\begin{figure}[h]
\begin{equation*}
\xymatrixcolsep{4pc}
\xymatrix{
\mathcal{C}_\Gamma(D_\mathcal{O},\,N) \ar@{->}[rr]_{\psi_{\Gamma,\,P}}^{\sim}
\ar@{->>}[ddr]_{\psi_{\Gamma,\,\widetilde{P}_\Gamma}}
 & & I(\mathcal{O},\,N)/P  \\\\
& I(\mathcal{O},\,N)/\widetilde{P}_\Gamma \ar@{->>}[uur]_{\textrm{natural}} & }
\end{equation*}
\caption{A commutative diagram for injectivity of $\psi_{\Gamma,\,\widetilde{P}_\Gamma}$}
\label{Figure04}
\end{figure}
\par
Conversely, assume that $\psi_{\Gamma,\,\widetilde{P}_\Gamma}$ is injective.
Then $\psi_{\Gamma,\,\widetilde{P}_\Gamma}$ is bijective, and hence $\Gamma$ induces a form class group of level $N$ in the sense of Definition \ref{induce}
with $P=\widetilde{P}_\Gamma$. 
\end{proof}

\section {Form class groups as Galois groups}

In this section, we shall prove that the group $\Gamma_G$ induces a form class group of level $N$.
Rather than using Proposition \ref{theoretical}
which is fundamental but theoretical, we shall develop a criterion for a congruence subgroup $\Gamma$ of $\mathrm{SL}_2(\mathbb{Z})$ to induce a form class group of level $N$
in view of Shimura's theory of canonical models. 
\par
It was first mentioned by Stevenhagen (\cite[$\S$4]{Stevenhagen}) that
\begin{equation}\label{F_Ngeneration}
K_{\mathcal{O},\,\{1+N\mathbb{Z}\}}=
K\left(h(\tau_\mathcal{O})~|~h\in\mathcal{F}_N~\textrm{is finite at}~\tau_\mathcal{O}\right).
\end{equation}
See also \cite[Theorem 4]{Cho}. 
By Proposition \ref{Gamma_1(N)}, $\mathcal{C}_{\Gamma_1(N)}(D_\mathcal{O},\,N)$ is 
a form class group of level $N$. 
Let
\begin{equation*}
\phi_{\mathcal{O},\,\{1+N\mathbb{Z}\}}:\mathcal{C}_{\Gamma_1(N)}(D_\mathcal{O},\,N)
\stackrel{\sim}{\rightarrow}\mathrm{Gal}(K_{\mathcal{O},\,\{1+N\mathbb{Z}\}}/K)
\end{equation*}
be the isomorphism obtained by composing the following three isomorphisms
\begin{enumerate}
\item[(i)] $\psi_{\Gamma_1(N),\,P_{\{1+N\mathbb{Z}\}}(\mathcal{O},\,N)}:\mathcal{C}_{\Gamma_1(N)}(D_\mathcal{O},\,N)
\stackrel{\sim}{\rightarrow}I(\mathcal{O},\,N)/P_{\{1+N\mathbb{Z}\}}(\mathcal{O},\,N)$
sending $[Q]$ to $[\omega_Q,\,1]$
stated in Proposition \ref{Gamma_1(N)}, 
\item[(ii)] $I(\mathcal{O},\,N)/P_{\{1+N\mathbb{Z}\}}(\mathcal{O},\,N)\stackrel{\sim}{\rightarrow}
I(\mathcal{O}_K,\,\ell_\mathcal{O}N)/
P_G(\mathcal{O}_K,\,\ell_\mathcal{O}N,\,\ell_\mathcal{O}N)$ given in Corollary \ref{CK},
\item[(iii)] $I(\mathcal{O}_K,\,\ell_\mathcal{O}N)/
P_G(\mathcal{O}_K,\,\ell_\mathcal{O}N,\,\ell_\mathcal{O}N)\stackrel{\sim}{\rightarrow}\mathrm{Gal}(K_{\mathcal{O},\,\{1+N\mathbb{Z}\}}/K)$
induced by the Artin map for the modulus $\ell_\mathcal{O}N\mathcal{O}_K$. 
\end{enumerate}

\begin{proposition}\label{CG}
We have an explicit description of $\phi_{\mathcal{O},\,\{1+N\mathbb{Z}\}}$ as
\begin{eqnarray*}
\phi_{\mathcal{O},\,\{1+N\mathbb{Z}\}}~:~\mathcal{C}_{\Gamma_1(N)}(D_\mathcal{O},\,N)
& \stackrel{\sim}{\rightarrow} & \mathrm{Gal}(K_{\mathcal{O},\,\{1+N\mathbb{Z}\}}/K)\\
\mathrm{[}Q] & \mapsto &
\left(
h(\tau_\mathcal{O})\mapsto h^{\left[\begin{smallmatrix}
1 & -a'(\frac{b+b_\mathcal{O}}{2})\\
0& a'\end{smallmatrix}\right]}(-\overline{\omega}_Q)~|~h\in\mathcal{F}_N~\textrm{is finite at}~\tau_\mathcal{O}\right)
\end{eqnarray*}
where $Q=ax^2+bxy+cy^2\in\mathcal{Q}(D_\mathcal{O},\,N)$ and $a'$ is an integer such that $aa'\equiv1\Mod{N}$. 
\end{proposition}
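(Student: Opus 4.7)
The plan is to use Shimura's reciprocity law, which is implicit in Proposition~\ref{canonical}(iv) combined with the theory of the extension $\mathcal{F}/\mathcal{F}_1$. It asserts that for $s\in\widehat{K}^\times$ with Artin symbol $\sigma=[s,K]$ and $h\in\mathcal{F}$ finite at $\tau_\mathcal{O}$,
\begin{equation*}
h(\tau_\mathcal{O})^{\sigma}=h^{q_{\tau_\mathcal{O}}(s)^{-1}}(\tau_\mathcal{O}).
\end{equation*}
Hence the proof reduces to two tasks: (i) finding an explicit idele $s=s(Q)$ whose Artin image equals the target Galois element $\phi_{\mathcal{O},\{1+N\mathbb{Z}\}}([Q])$, and (ii) decomposing $q_{\tau_\mathcal{O}}(s)^{-1}$ as $\alpha\cdot\beta$ with $\alpha\in\mathrm{GL}_2^+(\mathbb{Q})$ and $\beta\in\prod_p\mathrm{GL}_2(\mathbb{Z}_p)$ in such a way that $\alpha$ sends $\tau_\mathcal{O}$ to $-\overline{\omega}_Q$ as a fractional linear transformation, while $\beta$ reduces modulo $N$ to $\gamma=\left[\begin{smallmatrix}1 & -a'(b+b_\mathcal{O})/2\\ 0& a'\end{smallmatrix}\right]$.

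For (i), I track the three-step composition defining $\phi_{\mathcal{O},\{1+N\mathbb{Z}\}}$. The class $[Q]$ goes to $[[\omega_Q,1]]\in I(\mathcal{O},N)/P_{\{1+N\mathbb{Z}\}}(\mathcal{O},N)$, and then under Corollary~\ref{CK} to $[[\omega_Q,1]\mathcal{O}_K]\in I(\mathcal{O}_K,\ell_\mathcal{O}N)/P_{\{1+N\mathbb{Z}\}}(\mathcal{O}_K,\ell_\mathcal{O}N,\ell_\mathcal{O}N)$. A convenient integral representative is $\mathfrak{a}=a[\omega_Q,1]=\left[\tfrac{-b+\sqrt{D_\mathcal{O}}}{2},\,a\right]$, prime to $\ell_\mathcal{O}N$ since $\gcd(a,N)=1$. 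Mirroring the construction at the end of the proof of Lemma~\ref{idelegroup}(ii), I build an idele $s=(s_p)_p\in\widehat{K}^\times$ whose local components realize $\mathfrak{a}\mathcal{O}_{K,p}$, with a rational correction $\nu_s\in K^\times$ supplied by the approximation theorem so that $\nu_s s\mathcal{O}_K$ lies in the class of $\mathfrak{a}\mathcal{O}_K$.

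For (ii), using $\sqrt{D_\mathcal{O}}=2\tau_\mathcal{O}+b_\mathcal{O}$ gives $-\overline{\omega}_Q=\tfrac{1}{a}\tau_\mathcal{O}+\tfrac{b+b_\mathcal{O}}{2a}$, where $(b+b_\mathcal{O})/2\in\mathbb{Z}$ because $b$ and $b_\mathcal{O}$ share the parity of $D_\mathcal{O}$. Formula~\eqref{q(s)} then yields
\begin{equation*}
q_{\tau_\mathcal{O}}(-\overline{\omega}_Q)=\frac{1}{2a}\begin{bmatrix}b-b_\mathcal{O} & -2c_\mathcal{O}\\ 2 & b+b_\mathcal{O}\end{bmatrix}\in\mathrm{GL}_2^+(\mathbb{Q}),
\end{equation*}
whose fractional linear action sends $\tau_\mathcal{O}$ to $-\overline{\omega}_Q$. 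I then factor $q_{\tau_\mathcal{O}}(s)^{-1}=q_{\tau_\mathcal{O}}(-\overline{\omega}_Q)\cdot\beta$ with $\beta\in\prod_p\mathrm{GL}_2(\mathbb{Z}_p)$, and verify that $\beta$ reduces modulo $N$ to $\gamma$. The integer $a'$ appears as the modular inverse of $a$ modulo $N$, arising from the rational rescaling required to make $\beta$ land in $\prod_p\mathrm{GL}_2(\mathbb{Z}_p)$.

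The main obstacle is the simultaneous matching in (i) and (ii): the idele $s$ must both represent the correct Galois element in the sense of Lemma~\ref{idelegroup}(i) and admit the factorization above with rational part yielding $-\overline{\omega}_Q$ and integral part reducing to $\gamma$. This requires careful tracking of local data at each prime dividing $\ell_\mathcal{O}N$, and exploits the congruence $aa'\equiv 1\Mod{N}$ together with the integrality of $(b+b_\mathcal{O})/2$ to align the matrix entries. Once this bookkeeping is completed, substitution into Shimura's reciprocity delivers the formula stated in the proposition.
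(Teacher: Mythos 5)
Your overall plan (idelic Shimura reciprocity, an explicit idele representing the class, and a factorization into a rational part giving the evaluation point and an integral part giving the matrix mod $N$) is indeed the standard route to such formulas; the paper itself offers no argument here but simply cites \cite[Theorem 12.3]{J-K-S-Y23}. However, as written your proposal has concrete errors and leaves the actual content unproved. First, the one explicit computation you offer for step (ii) fails: by the defining relation (\ref{qwrelation}), for any $\lambda\in K^\times$ the matrix $q_{\tau_\mathcal{O}}(\lambda)$ has $\left[\begin{smallmatrix}\tau_\mathcal{O}\\1\end{smallmatrix}\right]$ as an eigenvector, so as a fractional linear transformation it \emph{fixes} $\tau_\mathcal{O}$. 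In particular $q_{\tau_\mathcal{O}}(-\overline{\omega}_Q)=\frac{1}{2a}\left[\begin{smallmatrix}b-b_\mathcal{O}&-2c_\mathcal{O}\\2&b+b_\mathcal{O}\end{smallmatrix}\right]$ sends $\tau_\mathcal{O}$ to $\tau_\mathcal{O}$, not to $-\overline{\omega}_Q$, so it cannot serve as the rational factor $\alpha$; the matrix you actually need is a basis-change matrix such as $\left[\begin{smallmatrix}1&\frac{b+b_\mathcal{O}}{2}\\0&a\end{smallmatrix}\right]$ (whose inverse is congruent mod $N$ to the $\gamma$ in the statement), and producing it forces a genuinely different bookkeeping than the one you sketch.

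Second, in step (i) the representative $\mathfrak{a}=a[\omega_Q,1]$ is not justified: its norm is $a$, and $\gcd(a,N)=1$ does not give $\gcd(a,\ell_\mathcal{O})=1$, so $\mathfrak{a}$ need not lie in $I(\mathcal{O},\ell_\mathcal{O}N)$ at all. Moreover, even when it does, $[\mathfrak{a}]\neq[[\omega_Q,1]]$ in $I(\mathcal{O},N)/P_{\{1+N\mathbb{Z}\}}(\mathcal{O},N)$: the two classes differ by the class of $a\mathcal{O}$, which is nontrivial at level $N$ unless $a\equiv\pm1\Mod{N}$, and this discrepancy is exactly (part of) where the factor $a'$ in the target matrix comes from, so it cannot be absorbed silently into ``a rational correction $\nu_s$''. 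Finally, the decisive verification --- choosing $s$ prime by prime, carrying out the factorization of $q_{\tau_\mathcal{O}}(s)^{-1}$, and checking that the integral part reduces mod $N$ to $\left[\begin{smallmatrix}1&-a'(\frac{b+b_\mathcal{O}}{2})\\0&a'\end{smallmatrix}\right]$, including the primes dividing $\ell_\mathcal{O}N$ and those dividing $a$ --- is deferred to ``bookkeeping'', but that computation \emph{is} the proposition. So the proposal identifies the right framework (compare Lemma \ref{reciprocity}) but, in its current form, contains a false intermediate claim and does not yet constitute a proof.
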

\begin{proof}
See \cite[Theorem 12.3]{J-K-S-Y23}.
\end{proof}

\begin{lemma}\label{qbar}
Let $h$ be a meromorphic modular function for $\Gamma_1(N)$ with rational Fourier coefficients. 
If $h$ is finite at a point $z$ in $\mathbb{H}$, then it is also finite at $-\overline{z}$ and
satisfies $h(-\overline{z})=\overline{h(z)}$. 
\end{lemma}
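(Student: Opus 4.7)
The plan is to introduce the auxiliary function $g(\tau) := \overline{h(-\overline{\tau})}$ on $\mathbb{H}$ and prove that $g\equiv h$; the stated conclusion will then follow by reading off this identity at $\tau = z$. Note that $\tau \mapsto -\overline{\tau}$ preserves $\mathbb{H}$ (it fixes the imaginary part) and is anti-holomorphic, so composing $h$ with it and then conjugating yields a meromorphic function of $\tau$ whose poles are precisely the images of poles of $h$ under $\tau \mapsto -\overline{\tau}$.

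Next I would verify $\Gamma_1(N)$-invariance of $g$. Given $\gamma = \left[\begin{smallmatrix} a & b \\ c & d \end{smallmatrix}\right] \in \Gamma_1(N)$, a direct computation using $a,b,c,d\in\mathbb{R}$ gives
\begin{equation*}
-\overline{\gamma\tau} \;=\; \gamma'(-\overline{\tau}), \qquad \gamma' = \begin{bmatrix} a & -b \\ -c & d \end{bmatrix},
\end{equation*}
and $\gamma'$ still belongs to $\Gamma_1(N)$, because the defining congruences $a\equiv d\equiv 1 \Mod{N}$ and $c\equiv 0 \Mod{N}$ are preserved under negation of $b$ and $c$. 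Hence $g(\gamma\tau) = \overline{h(\gamma'(-\overline{\tau}))} = \overline{h(-\overline{\tau})} = g(\tau)$.

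The key step is to compare Fourier expansions at $i\infty$. Since $T\in\Gamma_1(N)$, $h$ admits a $q$-expansion $h(\tau) = \sum_n c_n q^n$ with $q = e^{2\pi i \tau}$ and $c_n \in \mathbb{Q}$. Using $e^{2\pi i (-\overline{\tau})} = \overline{e^{2\pi i \tau}}$, we get
\begin{equation*}
g(\tau) \;=\; \overline{\sum_n c_n\, e^{-2\pi i n \overline{\tau}}} \;=\; \sum_n \overline{c_n}\, q^n \;=\; \sum_n c_n q^n,
\end{equation*}
using $c_n\in\mathbb{R}$. Thus $g$ and $h$ share the same Fourier expansion, which converges uniformly on a neighborhood of $i\infty$. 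Therefore $g = h$ on an open subset of $\mathbb{H}$, and the identity principle for meromorphic functions on the connected domain $\mathbb{H}$ upgrades this to $g\equiv h$ globally.

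Finally, setting $\tau = z$ in $h(\tau) = \overline{h(-\overline{\tau})}$ shows that $h$ is finite at $z$ if and only if it is finite at $-\overline{z}$, and in that case $h(-\overline{z}) = \overline{h(z)}$. I do not foresee a substantial obstacle: the only mildly delicate bookkeeping is verifying that the conjugated matrix $\gamma'$ remains in $\Gamma_1(N)$, which is exactly where the hypothesis that the level is $\Gamma_1(N)$ (rather than a more exotic group) is used.
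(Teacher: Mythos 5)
Your argument is correct: the auxiliary function $g(\tau)=\overline{h(-\overline{\tau})}$ is meromorphic, $\Gamma_1(N)$-invariant (since $\left[\begin{smallmatrix}a&-b\\-c&d\end{smallmatrix}\right]$ stays in $\Gamma_1(N)$), and has the same rational $q$-expansion as $h$, so $g\equiv h$ and the claim follows. This is exactly the standard reflection argument the paper has in mind when it declares the lemma immediate, so there is nothing to add.
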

\begin{proof}
It is immediate.
\end{proof}

\begin{proposition}\label{criterion}
Let $\Gamma$ be a subgroup of $\mathrm{SL}_2(\mathbb{Z})$ containing $\Gamma_1(N)$. 
If there is an open subgroup $S$ of $\mathrm{GL}_2(\widehat{\mathbb{Q}})$ containing 
$\mathbb{Q}^\times$ such that $S/\mathbb{Q}^\times$ is compact and
\begin{enumerate}
\item[\textup{(i)}] $\Gamma_S/\mathbb{Q}^\times\simeq\langle\Gamma,\,-I_2\rangle/\langle-I_2\rangle$ in the sense of \textup{Proposition \ref{canonical} (i)}, 
\item[\textup{(ii)}] $S\supseteq\left\{\begin{bmatrix}1&0\\0&d\end{bmatrix}~|~d\in\widehat{\mathbb{Z}}^\times\right\}$,
\end{enumerate}
then  $\Gamma$ induces a form class group of level $N$. 
\end{proposition}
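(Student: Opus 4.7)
The plan is to invoke Proposition \ref{theoretical} and check that $\psi_{\Gamma,\widetilde{P}_\Gamma}$ is injective. Set
\[
L := K\bigl(h(\tau_\mathcal{O}) \,|\, h \in \mathcal{F}_{\Gamma,\mathbb{Q}} \text{ is finite at } \tau_\mathcal{O}\bigr).
\]
I would identify, under the isomorphism $\phi_{\mathcal{O},\{1+N\mathbb{Z}\}}$ of Proposition \ref{CG}, the subgroup $\widetilde{P}_\Gamma/P_{\{1+N\mathbb{Z}\}}(\mathcal{O},N)$ of $\mathcal{C}_{\Gamma_1(N)}(D_\mathcal{O},N)$ with $\mathrm{Gal}(K_{\mathcal{O},\{1+N\mathbb{Z}\}}/L)$, and exhibit a bijection $\mathcal{C}_\Gamma(D_\mathcal{O},N) \simeq \mathrm{Gal}(L/K)$ induced by $[Q] \mapsto \phi_{\mathcal{O},\{1+N\mathbb{Z}\}}([Q])|_L$.

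Hypotheses (i) and (ii) together with Proposition \ref{canonical}(ii) and Lemma \ref{kSQ}(i) yield $\mathcal{F}_S = \mathcal{F}_{\Gamma,\mathbb{Q}}$; Proposition \ref{canonical}(iv) then places $L$ in correspondence with $K^\times q_{\tau_\mathcal{O}}^{-1}(S)$, and Theorem \ref{generation} applied with $G = \{1+N\mathbb{Z}\}$ gives $L \subseteq K_{\mathcal{O},\{1+N\mathbb{Z}\}}$. The key simplification is that the matrix $M_Q = \bigl[\begin{smallmatrix}1 & -a'(b+b_\mathcal{O})/2 \\ 0 & a'\end{smallmatrix}\bigr]$ appearing in Proposition \ref{CG} factors as $\mathrm{diag}(1,a') \cdot T^{-a'(b+b_\mathcal{O})/2}$; the first factor acts trivially on $h \in \mathcal{F}_{\Gamma_1(N),\mathbb{Q}}$ because the Fourier coefficients of $h$ lie in $\mathbb{Q}$, and the second because $T \in \Gamma_1(N)$. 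Consequently
\[
\phi_{\mathcal{O},\{1+N\mathbb{Z}\}}([Q])(h(\tau_\mathcal{O})) = h(-\overline{\omega}_Q) = \overline{h(\omega_Q)}
\]
for any $h \in \mathcal{F}_{\Gamma_1(N),\mathbb{Q}}$ finite at $\tau_\mathcal{O}$, the last equality via Lemma \ref{qbar}.

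Now I establish $\phi_{\mathcal{O},\{1+N\mathbb{Z}\}}\bigl(\widetilde{P}_\Gamma/P_{\{1+N\mathbb{Z}\}}(\mathcal{O},N)\bigr) = \mathrm{Gal}(K_{\mathcal{O},\{1+N\mathbb{Z}\}}/L)$ by double inclusion. The inclusion $\subseteq$ uses that each generator $j(\gamma^{-1},\omega_{Q_1})\mathcal{O}$ of $P_\Gamma$ corresponds to $[Q_1][Q_1^\gamma]^{-1}$ in $\mathcal{C}_{\Gamma_1(N)}$, together with the $\Gamma$-invariance $h(\gamma^{-1}\omega_{Q_1}) = h(\omega_{Q_1})$ of $h \in \mathcal{F}_{\Gamma,\mathbb{Q}}$, to give $\phi([Q_1])|_L = \phi([Q_1^\gamma])|_L$. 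For $\supseteq$, suppose $\phi([Q])|_L = \mathrm{id}|_L$; then $\overline{h(\omega_Q)} = h(\tau_\mathcal{O})$ for every $h \in \mathcal{F}_{\Gamma,\mathbb{Q}}$ finite at $\tau_\mathcal{O}$, and since $-\overline{\tau}_\mathcal{O}$ is $\Gamma_1(N)$-equivalent to $\tau_\mathcal{O}$ via a translation, Lemma \ref{qbar} forces $h(\tau_\mathcal{O}) \in \mathbb{R}$, hence $h(\omega_Q) = h(\tau_\mathcal{O})$. Lemma \ref{kSQ}(ii) applied with $\omega = \tau_\mathcal{O}$ and $\omega' = \omega_Q$ (finiteness at $\omega_Q$ follows from $h(\omega_Q) = h(\tau_\mathcal{O}) \in K_{\mathcal{O},\{1+N\mathbb{Z}\}}$) yields $\tau_\mathcal{O} = \gamma(\omega_Q)$ for some $\gamma \in \Gamma$; hence $\omega_Q = \omega_{Q_0^\gamma}$ where $Q_0 = x^2 + b_\mathcal{O} xy + c_\mathcal{O} y^2$ is the principal form, forcing $Q = Q_0^\gamma$ in $\mathcal{Q}(D_\mathcal{O},N)$ and $[\omega_Q,1] = j(\gamma^{-1},\tau_\mathcal{O})^{-1}\mathcal{O} \in P_\Gamma$.

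Finally, consider the set map $\overline{\Phi}: \mathcal{C}_\Gamma(D_\mathcal{O},N) \to \mathrm{Gal}(L/K)$, $[Q] \mapsto \phi_{\mathcal{O},\{1+N\mathbb{Z}\}}([Q])|_L$, which is well-defined by the $\subseteq$ argument applied to general $\gamma \in \Gamma$ and surjective because $\phi|_L$ is. An analogous invocation of Lemma \ref{kSQ}(ii) to the pair $(\omega_Q,\omega_{Q'})$ whenever $\phi([Q])|_L = \phi([Q'])|_L$ delivers $\gamma \in \Gamma$ with $\omega_Q = \gamma(\omega_{Q'})$, i.e., $Q \sim_\Gamma Q'$, so $\overline{\Phi}$ is a bijection. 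Combined with the identity $[L:K] = |I(\mathcal{O},N)/\widetilde{P}_\Gamma|$ (from the subgroup identification in the previous paragraph), this forces the already-surjective $\psi_{\Gamma,\widetilde{P}_\Gamma}$ to be bijective. The main obstacle is precisely this last application of Lemma \ref{kSQ}(ii) to $(\omega_Q,\omega_{Q'})$: its finiteness hypothesis is formulated for functions finite at $\omega_Q$, while the equality $\phi([Q])|_L = \phi([Q'])|_L$ supplies only $h(\omega_Q) = h(\omega_{Q'})$ for $h$ finite at $\tau_\mathcal{O}$; bridging this gap requires either a geometric argument that the local ring of the modular curve $\Gamma \backslash \mathbb{H}$ at $[\tau_\mathcal{O}]$ contains enough functions to separate distinct closed points, or a reduction to the already-handled case $\phi([Q][Q']^{-1})|_L = \mathrm{id}$ via the group structure on $\mathcal{C}_{\Gamma_1(N)}$ followed by a compatibility check with the composition of forms.
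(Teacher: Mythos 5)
Your overall route is essentially the paper's: you use Theorem \ref{generation} and Proposition \ref{CG} (with the same factorization of the matrix $\bigl[\begin{smallmatrix}1 & -a'(b+b_\mathcal{O})/2\\ 0 & a'\end{smallmatrix}\bigr]$ to drop the twist), Lemma \ref{qbar} to pass between $h(-\overline{\omega}_Q)$ and $\overline{h(\omega_Q)}$, and hypotheses (i), (ii) with Proposition \ref{canonical} and Lemma \ref{kSQ} to turn equality of special values into $\Gamma$-equivalence of CM points. The only real difference is bookkeeping: you route the conclusion through Proposition \ref{theoretical}, identifying $\mathrm{Gal}(K_{\mathcal{O},\{1+N\mathbb{Z}\}}/L)$ with $\widetilde{P}_\Gamma/P_{\{1+N\mathbb{Z}\}}(\mathcal{O},N)$ and finishing by a counting argument, whereas the paper never invokes Proposition \ref{theoretical}: it shows directly that the map $[Q]\mapsto\bigl(h(\tau_\mathcal{O})\mapsto h(-\overline{\omega}_Q)\bigr)$ is a well-defined bijection $\mathcal{C}_\Gamma(D_\mathcal{O},N)\to\mathrm{Gal}(L/K)$, defines $P$ as the subgroup corresponding to $\mathrm{Gal}(K_{\mathcal{O},\{1+N\mathbb{Z}\}}/L)$ under $\psi_{\Gamma_1(N),\,P_{\{1+N\mathbb{Z}\}}(\mathcal{O},N)}\circ\phi_{\mathcal{O},\{1+N\mathbb{Z}\}}^{-1}$, and then checks $P\subseteq P(\mathcal{O},N)$ by computing $P$ as the classes $[\omega_Q,1]$ with $Q\sim_\Gamma Q_0$, i.e.\ $j(\gamma^{-1},\tau_\mathcal{O})^{-1}\mathcal{O}$. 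Your identification of $P$ with $\widetilde{P}_\Gamma$ is a slightly more explicit version of the same information (consistent with the uniqueness clause of Proposition \ref{theoretical}), and your counting step is sound once $\overline{\Phi}$ is bijective.

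The one place you stop short — injectivity of $\overline{\Phi}$ for a general pair $(\omega_Q,\omega_{Q'})$ — is not treated as an obstacle in the paper, and you do not need either of the two bridges you sketch. The paper handles well-definedness and injectivity in a single equivalence chain, whose key step is exactly: $h(\omega_Q)=h(\omega_{Q'})$ for all $h\in\mathcal{F}_{\Gamma,\mathbb{Q}}$ finite at $\tau_\mathcal{O}$ implies $\omega_Q=\gamma(\omega_{Q'})$ for some $\gamma\in\Gamma$, justified by (i), (ii) and Lemma \ref{kSQ} — that is, the lemma is applied with the functions quantified precisely as you have them, ``finite at $\tau_\mathcal{O}$,'' the needed finiteness transfer being the observation (already used in your $\supseteq$ step) that any $h\in\mathcal{F}_{\Gamma_1(N),\mathbb{Q}}$ finite at $\tau_\mathcal{O}$ is finite at every $-\overline{\omega}_{Q''}$, hence at every $\omega_{Q''}$ by Lemma \ref{qbar}. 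So you should simply invoke Lemma \ref{kSQ} (ii) (equivalently \cite[Lemma 7.1]{J-K-S-Y223}) for the pair $(\omega_Q,\omega_{Q'})$ just as you did for $(\tau_\mathcal{O},\omega_Q)$; the point-separation content of that lemma is what makes the step legitimate, and no appeal to the group structure on $\mathcal{C}_{\Gamma_1(N)}$ or to composition of forms is required (your proposed reduction to the identity case would in any event be delicate, since passing from $[Q][Q']^{-1}\in\widetilde{P}_\Gamma/P_{\{1+N\mathbb{Z}\}}$ back to $Q\sim_\Gamma Q'$ is close to the injectivity you are trying to prove). A small side remark: in your $\supseteq$ step, finiteness of $h$ at $\omega_Q$ should be justified by this same observation plus Lemma \ref{qbar}, not by the asserted equality $h(\omega_Q)=h(\tau_\mathcal{O})$, which presupposes the value exists.
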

\begin{proof}
By Propositions \ref{generation} and \ref{CG},
we also have the isomorphism
\begin{eqnarray*}
\phi_{\mathcal{O},\,\{1+N\mathbb{Z}\}}~:~\mathcal{C}_{\Gamma_1(N)}(D_\mathcal{O},\,N)&\stackrel{\sim}{\rightarrow}&\mathrm{Gal}(K_{\mathcal{O},\,\{1+N\mathbb{Z}\}}/K)\\
\mathrm{[}Q] & \mapsto & \left(
h(\tau_\mathcal{O})\mapsto h(-\overline{\omega}_Q)~|~h\in\mathcal{F}_{\Gamma_1(N),\,\mathbb{Q}}~\textrm{is finite at}~\tau_\mathcal{O}\right).
\end{eqnarray*}
Observe that if $h\in\mathcal{F}_{\Gamma_1(N),\,\mathbb{Q}}$ is finite at $\tau_\mathcal{O}$, 
then it is finite as well at $-\overline{\omega}_Q$ for every $Q\in\mathcal{Q}(D_\mathcal{O},\,N)$. 
Let $\mathcal{F}_{\Gamma,\,\mathbb{Q}}$ be
the field of meromorphic modular functions for $\Gamma$ with rational Fourier coefficients, 
and let
\begin{equation*}
L=K\left(h(\tau_\mathcal{O})~|~h\in\mathcal{F}_{\Gamma,\,\mathbb{Q}}~\textrm{is finite at}~\tau_\mathcal{O}\right).
\end{equation*}
Since $\Gamma$ contains $\Gamma_1(N)$, $\mathcal{F}_{\Gamma,\,\mathbb{Q}}$ is a subfield
of $\mathcal{F}_{\Gamma_1(N),\,\mathbb{Q}}$, and so $L$ is a subfield of $K_{\mathcal{O},\,\{1+N\mathbb{Z}\}}$. 
Hence $\phi_{\mathcal{O},\,\{1+N\mathbb{Z}\}}$ yields the surjective homomorphism
\begin{eqnarray*}
\mathcal{C}_{\Gamma_1(N)}(D_\mathcal{O},\,N)&\twoheadrightarrow&\mathrm{Gal}(L/K)\\
\mathrm{[}Q] & \mapsto & \left(
h(\tau_\mathcal{O})\mapsto h(-\overline{\omega}_Q)~|~h\in\mathcal{F}_{\Gamma,\,\mathbb{Q}}~\textrm{is finite at}~\tau_\mathcal{O}\right).
\end{eqnarray*}
Now, consider the map
\begin{eqnarray*}
\phi~:~\mathcal{C}_\Gamma(D_\mathcal{O},\,N)&\rightarrow&\mathrm{Gal}(L/K)\\
\mathrm{[}Q] & \mapsto & \left(
h(\tau_\mathcal{O})\mapsto h(-\overline{\omega}_Q)~|~h\in\mathcal{F}_{\Gamma,\,\mathbb{Q}}~\textrm{is finite at}~\tau_\mathcal{O}\right)
\end{eqnarray*}
which is not necessarily well defined. 
We then find that for $Q,\,Q'\in\mathcal{Q}(D_\mathcal{O},\,N)$ 
\begin{eqnarray*}
&&h(-\overline{\omega}_Q)=h(-\overline{\omega}_{Q'})~\textrm{for all}~
h\in\mathcal{F}_{\Gamma,\,\mathbb{Q}}~\textrm{which are finite at}~\tau_\mathcal{O}\\
&\Longleftrightarrow&\overline{h(-\overline{\omega}_Q)}=
\overline{h(-\overline{\omega}_{Q'})}~\textrm{for all}~
h\in\mathcal{F}_{\Gamma,\,\mathbb{Q}}~\textrm{which are finite at}~\tau_\mathcal{O}\\
&\Longleftrightarrow&h(\omega_Q)=h(\omega_{Q'})~\textrm{for all}~
h\in\mathcal{F}_{\Gamma,\,\mathbb{Q}}~\textrm{which are finite at}~\tau_\mathcal{O}~
\textrm{by Lemma \ref{qbar}}\\
& \Longleftrightarrow &
\omega_Q=\gamma(\omega_{Q'})~
\textrm{for some}~\gamma\in\Gamma~\textrm{by (i), (ii) and Lemma \ref{kSQ}}\\
& \Longleftrightarrow &  
Q'=Q^\gamma\quad\textrm{for some}~\gamma\in\Gamma\\
&\Longleftrightarrow&[Q]=[Q']~\textrm{in}~\mathcal{C}_\Gamma(D_\mathcal{O},\,N).
\end{eqnarray*}
This argument shows that $\phi$ is well defined and is injective. Moreover, 
we deduce by the commutative diagram in Figure \ref{Figure05} that $\phi$ is surjective
and $\phi([Q_0])=\mathrm{id}_L$. 
\begin{figure}[h]
\begin{equation*}
\xymatrixcolsep{10pc}
\xymatrix{
\mathcal{C}_{\Gamma_1(N)}(D_\mathcal{O},\,N) \ar@{->}[r]^{\sim}_{\phi_{\mathcal{O},\,\{1+N\mathbb{Z}\}}}
\ar@{->>}[dd]_{\textrm{natural}}
 & \mathrm{Gal}(K_{\mathcal{O},\,\{1+N\mathbb{Z}\}}/K) \ar@{->>}[dd]^{\textrm{restriction}} \\\\
\mathcal{C}_\Gamma(D_\mathcal{O},\,N) \ar@{->}[r]_{\phi} & \mathrm{Gal}(L/K)
}
\end{equation*}
\caption{A commutative diagram for surjectivity of $\phi$}
\label{Figure05}
\end{figure}
\par
Let $P$ be the subgroup of $I(\mathcal{O},\,N)$ containing
$P_{\{1+N\mathbb{Z}\}}(\mathcal{O},\,N)$ such that the 
image of the subgroup $\mathrm{Gal}(K_{\mathcal{O},\,\{1+N\mathbb{Z}\}}/L)$ of 
$\mathrm{Gal}(K_{\mathcal{O},\,\{1+N\mathbb{Z}\}}/K)$ under
the isomorphism
\begin{equation*}
\psi_{\Gamma_1(N),\,P_{\{1+N\mathbb{Z}\}}(\mathcal{O},\,N)}\circ
\phi_{\mathcal{O},\,\{1+N\mathbb{Z}\}}^{-1}:\mathrm{Gal}(K_{\mathcal{O},\,\{1+N\mathbb{Z}\}}/K) \stackrel{\sim}{\rightarrow}
I(\mathcal{O},\,N)/P_{\{1+N\mathbb{Z}\}}(\mathcal{O},\,N)
\end{equation*}
is $P/P_{\{1+N\mathbb{Z}\}}(\mathcal{O},\,N)$. 
Then, the mapping
\begin{eqnarray*}
\psi~:~\mathcal{C}_\Gamma(D_\mathcal{O},\,N) & \rightarrow & I(\mathcal{O},\,N)/P\\
\mathrm{[}Q] & \mapsto & [[\omega_Q,\,1]]
\end{eqnarray*}
is a well-defined bijection which makes the diagram in Figure \ref{Figure06} commute. 
Furthermore, we establish by (\ref{wtO}) that
\begin{eqnarray*}
P&=&\{[\omega_Q,\,1]~|~\textrm{$Q$ is an element of}~\mathcal{Q}(D_\mathcal{O},\,N)~
\textrm{such that}~Q\sim_\Gamma Q_0\}P_{\{1+N\mathbb{Z}\}}(\mathcal{O},\,N)\\
&=&
\{[\omega_{Q_0^\gamma},\,1]~|~\gamma\in\Gamma~\textrm{satisfies}~Q_0^\gamma\in\mathcal{Q}(D_\mathcal{O},\,N)\}P_{\{1+N\mathbb{Z}\}}(\mathcal{O},\,N)\\
&=&\{[\gamma^{-1}(\omega_{Q_0}),\,1]~|~\gamma\in\Gamma~\textrm{satisfies}~Q_0^\gamma\in\mathcal{Q}(D_\mathcal{O},\,N)\}P_{\{1+N\mathbb{Z}\}}(\mathcal{O},\,N)\\
&=&\{j(\gamma^{-1},\,\omega_{Q_0})^{-1}[\omega_{Q_0},\,1]~|~\gamma\in\Gamma~\textrm{satisfies}~Q_0^\gamma\in\mathcal{Q}(D_\mathcal{O},\,N)\}P_{\{1+N\mathbb{Z}\}}(\mathcal{O},\,N)\\
&=&\{j(\gamma^{-1},\,\tau_\mathcal{O})^{-1}\mathcal{O}~|~\gamma\in\Gamma~\textrm{satisfies}~Q_0^\gamma\in\mathcal{Q}(D_\mathcal{O},\,N)\}P_{\{1+N\mathbb{Z}\}}(\mathcal{O},\,N),
\end{eqnarray*}
which claims that $P$ is a subgroup of $P(\mathcal{O},\,N)$.
Therefore we conclude that 
$\Gamma$ induces a form class group of level $N$. 
\begin{figure}[h]
\begin{equation*}
\xymatrixcolsep{7pc}
\xymatrix{
I(\mathcal{O},\,N)/P_{\{1+N\mathbb{Z}\}}(\mathcal{O},\,N)\ar@{->>}[dd]_{\textrm{natural}} &
\mathcal{C}_{\Gamma_1(N)}(D_\mathcal{O},\,N) \ar@{->}[r]^{\sim}_{\phi_{\mathcal{O},\,\{1+N\mathbb{Z}\}}}
\ar@{->>}[dd]_{\textrm{natural}}\ar@{->}[l]_{\sim}^{~~~~~~~\psi_{\Gamma_1(N),\,P_{\{1+N\mathbb{Z}\}}(\mathcal{O},\,N)}}
 & \mathrm{Gal}(K_{\mathcal{O},\,\{1+N\mathbb{Z}\}}/K) \ar@{->>}[dd]^{\textrm{restriction}} 
\\\\ 
I(\mathcal{O},\,N)/P &
 \mathcal{C}_\Gamma(D_\mathcal{O},\,N) \ar@{->}[r]^{\textrm{bijective}}_{\phi}
\ar@{->}[l]_{\textrm{bijective}}^{\psi} 
 & \mathrm{Gal}(L/K) 
}
\end{equation*}
\caption{A commutative diagram showing that $\mathcal{C}_\Gamma(D_\mathcal{O},\,N)$ is a form class group}
\label{Figure06}
\end{figure}
\end{proof}

\begin{theorem}\label{Gformclassgroup}
The map
\begin{eqnarray*}
\psi_{\Gamma_G,\,P_G(\mathcal{O},\,N)}~:~
\mathcal{C}_{\Gamma_G}(D_\mathcal{O},\,N) &\rightarrow&  I(\mathcal{O},\,N)/P_G(\mathcal{O},\,N)\\
\mathrm{[}Q] &\mapsto& [[\omega_Q,\,1]]
\end{eqnarray*}
is a well-defined bijection, and so the group $\Gamma_G$ induces a form class group of level $N$. 
\end{theorem}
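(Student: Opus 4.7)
The plan is to apply the criterion in Proposition \ref{criterion} to $\Gamma=\Gamma_G$ and then to identify the resulting subgroup of $P(\mathcal{O},\,N)$ with $P_G(\mathcal{O},\,N)$ by invoking the description of $K_{\mathcal{O},\,G}$ from Theorem \ref{generation}.

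First, I would take the open subgroup $S=\mathbb{Q}^\times W$ of $\mathrm{GL}_2(\widehat{\mathbb{Q}})$ exactly as in the proof of Theorem \ref{generation}, so that $W$ consists of those $(\gamma_p)_p\in\prod_p\mathrm{GL}_2(\mathbb{Z}_p)$ for which there is $t\in\mathbb{Z}_G$ with $\gamma_p\equiv\left[\begin{smallmatrix}t&\mathrm{*}\\0&\mathrm{*}\end{smallmatrix}\right]\Mod{NM_2(\mathbb{Z}_p)}$ for every prime $p$. The verifications already carried out in the proof of Theorem \ref{generation} show that $\Gamma_S=\mathbb{Q}^\times\Gamma_G$ and that $S\supseteq\left\{\left[\begin{smallmatrix}1&0\\0&d\end{smallmatrix}\right]~|~d\in\widehat{\mathbb{Z}}^\times\right\}$, so both hypotheses of Proposition \ref{criterion} are in place. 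Applying Proposition \ref{criterion} (together with Proposition \ref{theoretical}) then gives at once that $\Gamma_G$ induces a form class group of level $N$, with a unique subgroup $P$ of $P(\mathcal{O},\,N)$ containing $P_{\{1+N\mathbb{Z}\}}(\mathcal{O},\,N)$ for which $\psi_{\Gamma_G,\,P}$ is a well-defined bijection onto $I(\mathcal{O},\,N)/P$.

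The remaining task is to identify this $P$ with $P_G(\mathcal{O},\,N)$. Tracing through the construction in the proof of Proposition \ref{criterion}, the quotient $P/P_{\{1+N\mathbb{Z}\}}(\mathcal{O},\,N)$ is by definition the image of $\mathrm{Gal}(K_{\mathcal{O},\,\{1+N\mathbb{Z}\}}/L)$ under the isomorphism $\psi_{\Gamma_1(N),\,P_{\{1+N\mathbb{Z}\}}(\mathcal{O},\,N)}\circ\phi_{\mathcal{O},\,\{1+N\mathbb{Z}\}}^{-1}$ from $\mathrm{Gal}(K_{\mathcal{O},\,\{1+N\mathbb{Z}\}}/K)$ to $I(\mathcal{O},\,N)/P_{\{1+N\mathbb{Z}\}}(\mathcal{O},\,N)$, where $L=K(h(\tau_\mathcal{O})~|~h\in\mathcal{F}_{\Gamma_G,\,\mathbb{Q}}~\textrm{is finite at}~\tau_\mathcal{O})$. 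By Theorem \ref{generation} we have $L=K_{\mathcal{O},\,G}$. On the other hand, by the very definition of $K_{\mathcal{O},\,G}$ together with Corollary \ref{CK} and Proposition \ref{lNN}, the subgroup $\mathrm{Gal}(K_{\mathcal{O},\,\{1+N\mathbb{Z}\}}/K_{\mathcal{O},\,G})$ corresponds to $P_G(\mathcal{O},\,N)/P_{\{1+N\mathbb{Z}\}}(\mathcal{O},\,N)$ under the same isomorphism, because the Artin map for the modulus $\ell_\mathcal{O}N\mathcal{O}_K$ is compatible with the tower $K\subseteq K_{\mathcal{O},\,G}\subseteq K_{\mathcal{O},\,\{1+N\mathbb{Z}\}}$. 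This forces $P=P_G(\mathcal{O},\,N)$, and the theorem follows.

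The step I expect to be the main obstacle is this final identification: one must verify that the field $L$ implicit in the statement of Proposition \ref{criterion} is exactly the field $K_{\mathcal{O},\,G}$ constructed by class field theory (which is the content of Theorem \ref{generation}), and then transport the subfield inclusion $K_{\mathcal{O},\,G}\subseteq K_{\mathcal{O},\,\{1+N\mathbb{Z}\}}$ through the compatible Artin isomorphisms into the ideal-theoretic inclusion $P_G(\mathcal{O},\,N)\supseteq P_{\{1+N\mathbb{Z}\}}(\mathcal{O},\,N)$. Once this is set up carefully, the rest of the argument is purely formal, consisting of following the commutative diagrams provided by Proposition \ref{Gamma_1(N)}, Corollary \ref{CK}, and Proposition \ref{lNN}.
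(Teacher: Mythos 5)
Your proposal is correct and follows essentially the same route as the paper, whose proof simply cites the proof of Theorem \ref{generation} (for the subgroup $S=\mathbb{Q}^\times W$ verifying the hypotheses) together with Proposition \ref{criterion}. Your explicit identification of the subgroup $P$ produced by Proposition \ref{criterion} with $P_G(\mathcal{O},\,N)$ -- via $L=K_{\mathcal{O},\,G}$ and the compatibility of the Artin isomorphisms with Corollary \ref{CK} and Proposition \ref{lNN} -- is exactly the step the paper leaves implicit, and you carry it out correctly.
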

\begin{proof}
The result follows from the proof of Theorem \ref{generation} and Proposition \ref{criterion}.
\end{proof}

Let
\begin{eqnarray*}
k_G&=&\textrm{the fixed field of $\mathbb{Q}(\zeta_N)$ by $\det(G^2)$ $(\leq(\mathbb{Z}/N\mathbb{Z})^\times
\simeq\mathrm{Gal}(\mathbb{Q}(\zeta_N)/\mathbb{Q})$)},\\
\mathcal{F}_{\Gamma_G,\,k_G}&=&\textrm{the field of meromorphic modular functions for $\Gamma_G$}\\
&&\textrm{whose Fourier coefficients belong to $k_G$}.
\end{eqnarray*}

\begin{corollary}\label{phiOG}
We have $K_{\mathcal{O},\,G}=K\left(h(\tau_\mathcal{O})~|~h\in\mathcal{F}_{\Gamma_G,\,k_G}~\textrm{is finite at}~\tau_\mathcal{O}\right)$ and we get an isomorphism
\begin{eqnarray*}
\phi_{\mathcal{O},\,G}~:~\mathcal{C}_{\Gamma_G}(D_\mathcal{O},\,N)&\stackrel{\sim}{\rightarrow}&\mathrm{Gal}(K_{\mathcal{O},\,G}/K)\\
\mathrm{[}Q] & \mapsto & \left(
h(\tau_\mathcal{O})\mapsto h^{\left[\begin{smallmatrix}
1 & -a'(\frac{b+b_\mathcal{O}}{2})\\
0& a'\end{smallmatrix}\right]}(-\overline{\omega}_Q)~|~h\in\mathcal{F}_{\Gamma_G,\,k_G}~\textrm{is finite at}~\tau_\mathcal{O}\right)
\end{eqnarray*}
where $Q=ax^2+bxy+cy^2\in\mathcal{Q}(D_\mathcal{O},\,N)$ and $a'$ is an integer such that $aa'\equiv1\Mod{N}$. 
\end{corollary}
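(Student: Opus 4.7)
The plan is to combine three ingredients: \textup{Theorem \ref{generation}} already gives the generation of $K_{\mathcal{O},\,G}$ in terms of $\mathcal{F}_{\Gamma_G,\,\mathbb{Q}}$, \textup{Theorem \ref{Gformclassgroup}} provides the form class group structure, and \textup{Proposition \ref{CG}} supplies the explicit Galois action in the level-$N$ case. What remains is to upgrade the generation to $\mathcal{F}_{\Gamma_G,\,k_G}$, and then to push Proposition \ref{CG} down to $K_{\mathcal{O},\,G}$ via the restriction $\mathrm{Gal}(K_{\mathcal{O},\,\{1+N\mathbb{Z}\}}/K)\twoheadrightarrow\mathrm{Gal}(K_{\mathcal{O},\,G}/K)$.

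For the generation, I would refine the subgroup $S=\mathbb{Q}^\times W$ built in the proof of Theorem \ref{generation} to $S_G=\mathbb{Q}^\times W_G$, where $W_G$ imposes the additional requirement $\det(\gamma_p)\bmod N\in\det(G^2)$ at each $p\mid N$. Three things need to be verified: (a) $\Gamma_{S_G}/\mathbb{Q}^\times=\Gamma_G/\langle-I_2\rangle$, since $\det=1$ is automatic on $\mathrm{SL}_2$-matrices; (b) $\mathcal{F}_{S_G}=\mathcal{F}_{\Gamma_G,\,k_G}$, by factoring any upper-triangular reduction $\left[\begin{smallmatrix}t&u\\0&s\end{smallmatrix}\right]\bmod N$ of an element of $W_G$ as $\left[\begin{smallmatrix}t&u(ts)^{-1}\\0&t^{-1}\end{smallmatrix}\right]\cdot\left[\begin{smallmatrix}1&0\\0&ts\end{smallmatrix}\right]$ with $ts\in\det(G^2)$, then combining the fact that the first factor is the reduction of an element of $\Gamma_G$ with the Fourier-coefficient description of the $\sigma_d$-action recalled in Section 3; and (c) $K^\times q_{\tau_\mathcal{O}}^{-1}(S_G)=J_{\mathcal{O},\,G}$, by reusing the computation in the proof of Theorem \ref{generation} and observing that if $s_p=u_p\tau_\mathcal{O}+v_p$ lies in $q_{\tau_\mathcal{O}}^{-1}(W)$, then $u_p\equiv0$ and $v_p\equiv t\Mod{N\mathbb{Z}_p}$, so $\det(q_{\tau_\mathcal{O}}(s)_p)\equiv v_p^2\equiv t^2\Mod{N}$ already lies in $\det(G^2)$, making the extra condition vacuous on the ideles we care about. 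Proposition \ref{canonical}(iv) together with Lemma \ref{idelegroup}(ii) then yields
\[
K_{\mathcal{O},\,G}=K\bigl(h(\tau_\mathcal{O})\mid h\in\mathcal{F}_{\Gamma_G,\,k_G}~\text{is finite at}~\tau_\mathcal{O}\bigr).
\]

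For the explicit isomorphism, composing $\psi_{\Gamma_G,\,P_G(\mathcal{O},\,N)}$ of Theorem \ref{Gformclassgroup} with the isomorphism of Corollary \ref{CK} and the Artin map for the modulus $\ell_\mathcal{O}N\mathcal{O}_K$ produces an abstract isomorphism $\phi_{\mathcal{O},\,G}:\mathcal{C}_{\Gamma_G}(D_\mathcal{O},\,N)\stackrel{\sim}{\to}\mathrm{Gal}(K_{\mathcal{O},\,G}/K)$. To pin down its formula, I would contemplate the commutative square whose top row is $\phi_{\mathcal{O},\,\{1+N\mathbb{Z}\}}$ of Proposition \ref{CG}, whose left vertical is the natural surjection $\mathcal{C}_{\Gamma_1(N)}(D_\mathcal{O},\,N)\twoheadrightarrow\mathcal{C}_{\Gamma_G}(D_\mathcal{O},\,N)$ from Remark \ref{Q0}(ii), and whose right vertical is the restriction $\mathrm{Gal}(K_{\mathcal{O},\,\{1+N\mathbb{Z}\}}/K)\twoheadrightarrow\mathrm{Gal}(K_{\mathcal{O},\,G}/K)$; compatibility with the Artin maps is immediate from the inclusion of generalized ideal class groups $P_{\{1+N\mathbb{Z}\}}\subseteq P_G$. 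Since $\mathcal{F}_{\Gamma_G,\,k_G}\subseteq\mathcal{F}_N$, the formula of Proposition \ref{CG} transfers verbatim to the test functions $h\in\mathcal{F}_{\Gamma_G,\,k_G}$ finite at $\tau_\mathcal{O}$, giving the desired action of $[Q]$ on $h(\tau_\mathcal{O})$.

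The main obstacle I foresee is step (b) above, namely rigorously identifying $\mathcal{F}_{S_G}$ with $\mathcal{F}_{\Gamma_G,\,k_G}$ in both directions. This rests on transporting the condition of being fixed by $W_G$ through the theory of $\mathcal{F}_N$ as a $\mathrm{GL}_2(\mathbb{Z}/N\mathbb{Z})/\langle-I_2\rangle$-Galois extension of $\mathcal{F}_1$, and on carefully matching the diagonal $\sigma_d$-action on cyclotomic Fourier coefficients with the subgroup $\det(G^2)$ whose fixed field is $k_G$. The remaining verifications are direct matrix and idele computations together with a short diagram chase.
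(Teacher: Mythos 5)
Your handling of the second assertion is essentially the paper's own argument: the paper also obtains $\phi_{\mathcal{O},\,G}$ by restricting $\phi_{\mathcal{O},\,\{1+N\mathbb{Z}\}}$ of Proposition \ref{CG} through the commutative square whose left vertical is the natural surjection $\mathcal{C}_{\Gamma_1(N)}(D_\mathcal{O},\,N)\twoheadrightarrow\mathcal{C}_{\Gamma_G}(D_\mathcal{O},\,N)$ and whose right vertical is restriction of Galois groups. For the generation statement, however, your route is genuinely different. The paper builds no new canonical-model datum: it sets $L=K(h(\tau_\mathcal{O})\mid h\in\mathcal{F}_{\Gamma_G,\,k_G}~\textrm{finite at}~\tau_\mathcal{O})$, squeezes $K_{\mathcal{O},\,G}\subseteq L\subseteq K_{\mathcal{O},\,\{1+N\mathbb{Z}\}}$ via Theorem \ref{generation} and (\ref{F_Ngeneration}), and then shows every $\rho\in\mathrm{Gal}(K_{\mathcal{O},\,\{1+N\mathbb{Z}\}}/K_{\mathcal{O},\,G})$ fixes $L$: such $\rho$ is $\phi_{\mathcal{O},\,\{1+N\mathbb{Z}\}}([Q_0^\gamma])$ with $\gamma\in\Gamma_G$, and the formula of Proposition \ref{CG}, the congruence $t^2a'\equiv1\Mod{N}$ (so the diagonal part acts through $\sigma_{a'}$ with $a'\in\det(G^2)$, hence trivially on $k_G$-coefficients), and modularity for $\Gamma_G$ give $h(\tau_\mathcal{O})^\rho=h(\tau_\mathcal{O})$. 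Your proposal instead reruns the proof of Theorem \ref{generation} with a smaller open subgroup $S_G$; this is a self-contained derivation that produces both inclusions at once and identifies $\mathcal{F}_{S_G}$ directly, at the cost of redoing the fixed-field and idele computations that the paper's squeeze simply recycles.

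There is one genuine defect in your construction as written: you impose $\det(\gamma_p)\bmod N\in\det(G^2)$ separately at each $p\mid N$. For a $p$-adic matrix this only constrains the image of $\det(\gamma_p)$ in $(\mathbb{Z}/p^{v_p(N)}\mathbb{Z})^\times$, so the reduction of your $W_G$ modulo $N$ has determinants filling the product over $p\mid N$ of the local projections of $\det(G^2)$, which for composite $N$ can strictly contain $\det(G^2)$ (this happens whenever $\det(G^2)$ is a ``diagonal'' subgroup of $(\mathbb{Z}/N\mathbb{Z})^\times\simeq\prod_{p\mid N}(\mathbb{Z}/p^{v_p(N)}\mathbb{Z})^\times$ smaller than the product of its projections). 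Then your step (b) yields $\mathcal{F}_{S_G}=\mathcal{F}_{\Gamma_G,\,k'}$ for the possibly smaller field $k'$ fixed by that product, and step (c), though still valid, only proves $K_{\mathcal{O},\,G}=K(h(\tau_\mathcal{O})\mid h\in\mathcal{F}_{\Gamma_G,\,k'})$, i.e., the easy inclusion $K_{\mathcal{O},\,G}\subseteq K(h(\tau_\mathcal{O})\mid h\in\mathcal{F}_{\Gamma_G,\,k_G})$. The substantive reverse inclusion --- which your second part also needs so that the values $h^{\left[\begin{smallmatrix}1 & -a'(\frac{b+b_\mathcal{O}}{2})\\ 0& a'\end{smallmatrix}\right]}(-\overline{\omega}_Q)$ and $h(\tau_\mathcal{O})$ actually lie in $K_{\mathcal{O},\,G}$ --- is then missing. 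The repair is immediate and parallels the single $t$ already built into $W$: require a single class $d+N\mathbb{Z}\in\det(G^2)$ with $\det(\gamma_p)\equiv d\Mod{N\mathbb{Z}_p}$ for all $p\mid N$ (equivalently, the determinant of the combined reduction modulo $N$ lies in $\det(G^2)$). With this global formulation your verifications (a), (b), (c) go through as sketched, since on the relevant ideles $\det\equiv t^2\Mod{N}$ with the global $t\in\mathbb{Z}_G$.
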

\begin{proof}
Let $L=K\left(h(\tau_\mathcal{O})~|~h\in\mathcal{F}_{G,\,k_G}~\textrm{is finite at}~\tau_\mathcal{O}\right)$. 
By Theorem \ref{generation} and (\ref{F_Ngeneration}), we obtain the inclusions 
\begin{equation*}
K_{\mathcal{O},\,G}\subseteq L
\subseteq K_{\mathcal{O},\,\{1+N\mathbb{Z}\}}.
\end{equation*}
Let $\phi_{\mathcal{O},\,\{1+N\mathbb{Z}\}}:\mathcal{C}_{\Gamma_1(N)}(D_\mathcal{O},\,N)\stackrel{\sim}{\rightarrow}
\mathrm{Gal}(K_{\mathcal{O},\,\{1+N\mathbb{Z}\}}/K)$ be the isomorphism described in Proposition \ref{CG}. 
Now, consider an arbitrary element $\rho$ of $\mathrm{Gal}(K_{\mathcal{O},\,\{1+N\mathbb{Z}\}}/K_{\mathcal{O},\,G})$.
By Proposition \ref{Gamma_1(N)}, Remark \ref{Q0} and Theorem \ref{Gformclassgroup}, we have
$\rho=\phi_{\mathcal{O},\,\{1+N\mathbb{Z}\}}([Q])$ for some $Q=ax^2+bxy+cy^2\in\mathcal{Q}(D_\mathcal{O},\,N)$ such that
\begin{equation}\label{QQ0}
Q=Q_0^\gamma\quad\textrm{for some}~\gamma=\begin{bmatrix}p&q\\r&s\end{bmatrix}\in\Gamma_G. 
\end{equation}
Then, we see from the facts $a=p^2+b_\mathcal{O}pr+c_\mathcal{O}r^2$ and 
\begin{equation*}
\begin{bmatrix}p&q\\r&s\end{bmatrix}\equiv\begin{bmatrix}t&\mathrm{*}\\
0&\mathrm{*}\end{bmatrix}\Mod{NM_2(\mathbb{Z})}~\textrm{for some integer $t$ with $t+N\mathbb{Z}\in G$}
\end{equation*}
that if $a'$ is an integer satisfying $aa'\equiv1\Mod{N}$, then 
\begin{equation}\label{a'}
t^2a'\equiv 1\Mod{N}.
\end{equation}
Thus we derive by Proposition \ref{CG} that 
for any $h\in\mathcal{F}_{\Gamma_G,\,k_G}$ finite at $\tau_\mathcal{O}$
\begin{eqnarray*}
h(\tau_\mathcal{O})^\rho&=&
h(\tau_\mathcal{O})^{\phi_{\mathcal{O},\,\{1+N\mathbb{Z}\}}([Q])}\\
&=&h^{\left[\begin{smallmatrix}1 & -a'(\frac{b+b_\mathcal{O}}{2})\\
0& a'\end{smallmatrix}\right]}(-\overline{\omega}_Q)\\
&=&h^{\left[\begin{smallmatrix}1 & -\frac{b+b_\mathcal{O}}{2}\\
0& 1\end{smallmatrix}\right]\left[\begin{smallmatrix}1 & 0\\
0& a'\end{smallmatrix}\right]}(-\overline{\omega}_Q)\\
&=&h^{\left[\begin{smallmatrix}1 & 0\\
0& a'\end{smallmatrix}\right]}(-\overline{\omega}_Q)\quad\textrm{because $h$ is modular for}~
\Gamma_G \left(\ni\begin{bmatrix}1 & -\frac{b+b_\mathcal{O}}{2}\\0& 1\end{bmatrix}\right)\\
&=&h(-\overline{\omega}_Q)\quad\textrm{by (\ref{a'}) and the fact that
$h$ has Fourier coefficients in $k_G$}\\
&=&h(-\overline{\gamma^{-1}(\omega_{Q_0})})\quad\textrm{by (\ref{QQ0})}\\
&=&h(\alpha(-\overline{\omega}_{Q_0}))\quad\textrm{with}~\alpha=\begin{bmatrix}s&q\\r&p\end{bmatrix}\\
&=&h(-\overline{\omega}_{Q_0})\quad\textrm{since $\alpha\in\Gamma_G$}\\
&=&h\left(\begin{bmatrix}1&b_\mathcal{O}\\0&1\end{bmatrix}
(\omega_{Q_0})\right)\\
&=&h(\tau_\mathcal{O})\quad
\textrm{because}~
\begin{bmatrix}1&b_\mathcal{O}\\0&1\end{bmatrix}\in\Gamma_G~
\textrm{and}~\omega_{Q_0}=\tau_\mathcal{O}.
\end{eqnarray*}
This shows that $\rho$ fixes $L$ elementwise. Therefore we conclude by Galois theory that 
\begin{equation}\label{KLK}
K_{\mathcal{O},\,G}=L=K\left(h(\tau_\mathcal{O})~|~h\in\mathcal{F}_{G,\,k_G}~\textrm{is finite at}~\tau_\mathcal{O}\right). 
\end{equation}
\par
The second part of the corollary follows from Proposition \ref{CG}, (\ref{KLK}) and the commutative diagram in Figure \ref{Figure07}.  
\begin{figure}[t]
\begin{equation*}
\xymatrixcolsep{10pc}
\xymatrix{
\mathcal{C}_{\Gamma_1(N)}(D_\mathcal{O},\,N) \ar@{->}[r]^{\sim}_{\phi_{\mathcal{O},\,\{1+N\mathbb{Z}\}}}
\ar@{->>}[dd]_{\textrm{natural}}
 & \mathrm{Gal}(K_{\mathcal{O},\,\{1+N\mathbb{Z}\}}/K) \ar@{->>}[dd]^{\textrm{restriction}} \\\\
\mathcal{C}_{\Gamma_G}(D_\mathcal{O},\,N) \ar@{->}[r]^{\sim}_{\phi_{\mathcal{O},\,G}} & \mathrm{Gal}(K_{\mathcal{O},\,G}/K)
}
\end{equation*}
\caption{A commutative diagram for $\phi_{\mathcal{O},\,G}$}
\label{Figure07}
\end{figure}
\end{proof}

\section {Primes of the form $x^2+ny^2$ with some additional conditions}

Let $n$ be a positive integer. 
We shall apply the field $K_{\mathcal{O},\,G}$ to the problem of determining primes of the form $x^2+ny^2$ with additional conditions $x+N\mathbb{Z}\in G$ and $y\equiv0\Mod{N}$. 

\begin{lemma}\label{GoverQ}
The field $K_{\mathcal{O},\,G}$ is Galois over $\mathbb{Q}$.
\end{lemma}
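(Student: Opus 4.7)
The plan is to invoke class field theory in the form of Proposition \ref{CFT} together with Lemma \ref{idelegroup}(ii). Since $K/\mathbb{Q}$ is itself Galois of degree two with nontrivial element given by complex conjugation $c$, and since $K_{\mathcal{O},\,G}/K$ is abelian, the tower $K_{\mathcal{O},\,G}/\mathbb{Q}$ is Galois if and only if $K_{\mathcal{O},\,G}$ is stable under $c$. Under the class-field-theoretic dictionary of Proposition \ref{CFT}, the field $c(K_{\mathcal{O},\,G})$ corresponds to the subgroup $c(J_{\mathcal{O},\,G})$ of $\widehat{K}^\times$, where $c$ acts on $\widehat{K}^\times$ via $c\otimes 1_{\widehat{\mathbb{Z}}}$. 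It therefore suffices to show $c(J_{\mathcal{O},\,G})=J_{\mathcal{O},\,G}$.

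Using the explicit description
\[
J_{\mathcal{O},\,G}=\bigcup_{t\in\mathbb{Z}_G}K^\times\left\{\prod_{p\,|\,N}(t+N\mathcal{O}_p)\times\prod_{p\,\nmid\,N}\mathcal{O}_p^\times\right\}
\]
from Lemma \ref{idelegroup}(ii), the verification is direct: each $t\in\mathbb{Z}_G\subset\mathbb{Z}$ is fixed by $c$; the order $\mathcal{O}=\mathbb{Z}+\ell_\mathcal{O}\mathcal{O}_K$ is stable under $c$ since $\mathcal{O}_K$ is; hence each local component $\mathcal{O}_p=\mathcal{O}\otimes_\mathbb{Z}\mathbb{Z}_p$ is preserved by $c\otimes1$, as are $\mathcal{O}_p^\times$ and $t+N\mathcal{O}_p$. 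Together with $c(K^\times)=K^\times$, this gives $c(J_{\mathcal{O},\,G})=J_{\mathcal{O},\,G}$ and the lemma follows.

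There is no substantial obstacle here; an equally short alternative proceeds via Corollary \ref{CK}, noting that the modulus $\ell_\mathcal{O}N\mathcal{O}_K$ is rational (hence $c$-stable) and that the defining congruence $\nu\equiv a\Mod{\ell_\mathcal{O}N\mathcal{O}_K}$ with $a\in\mathbb{Z}$ is preserved by $c$, so that $P_G(\mathcal{O}_K,\,\ell_\mathcal{O}N,\,\ell_\mathcal{O}N)$ is $c$-stable and $K_{\mathcal{O},\,G}/\mathbb{Q}$ is Galois.
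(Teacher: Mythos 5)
Your proof is correct, and your primary route is genuinely a bit different from the paper's, while the ``alternative'' you sketch at the end is essentially the paper's own argument. The paper works at the ideal level: it computes, using the compatibility $\bigl(\tfrac{\mathfrak{c}(K_{\mathcal{O},G})/\mathfrak{c}(K)}{\mathfrak{c}(\mathfrak{a})}\bigr)=\mathfrak{c}\bigl(\tfrac{K_{\mathcal{O},G}/K}{\mathfrak{a}}\bigr)\mathfrak{c}^{-1}$, that the kernel of the Artin map for $\mathfrak{c}(K_{\mathcal{O},G})/K$ with modulus $\ell_\mathcal{O}N\mathcal{O}_K$ is again $P_G(\mathcal{O}_K,\ell_\mathcal{O}N,\ell_\mathcal{O}N)$ --- exactly because the modulus is rational and the defining congruence is to rational integers, so $P_G$ is stable under conjugation --- and then invokes the uniqueness part of the existence theorem to conclude $\mathfrak{c}(K_{\mathcal{O},G})=K_{\mathcal{O},G}$. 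You instead work with the idele subgroup $J_{\mathcal{O},G}$ of Lemma \ref{idelegroup}(ii) and observe it is stable under $c\otimes 1$, which is immediate from the explicit description (each $t$ is rational, $\mathcal{O}_p$ and $t+N\mathcal{O}_p$ are conjugation-stable, $c(K^\times)=K^\times$). Both arguments rest on the same invariance observation; your idelic version is shorter given Lemma \ref{idelegroup}(ii), but it silently uses the equivariance of the correspondence in Proposition \ref{CFT} under the automorphism $c$ of $K$ (i.e.\ that the field attached to $c(J)$ is $c(L)$), a standard functoriality of idelic class field theory that you should state or cite explicitly, just as the paper cites the existence theorem and uses the Artin-symbol compatibility at the ideal level. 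Your reduction of ``Galois over $\mathbb{Q}$'' to $c$-stability is also fine, since $K/\mathbb{Q}$ is Galois and $K_{\mathcal{O},G}/K$ is abelian.
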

\begin{proof}
Here, we use the left action notation for Galois elements. 
If we let 
$\iota$ and 
$\mathfrak{c}$ be the identity and the complex conjugation on $\mathbb{C}$,  respectively,
then we see that 
\begin{equation*}
\iota\rho,~\mathfrak{c}\rho\quad(\rho\in\mathrm{Gal}(K_{\mathcal{O},\,G}/K))
\end{equation*}
are all the distinct embeddings of $K_{\mathcal{O},\,G}$ into $\mathbb{C}$. 
So it suffices to show $\mathfrak{c}(K_{\mathcal{O},\,G})=K_{\mathcal{O},\,G}$
in order to prove that $K_{\mathcal{O},\,G}$ is Galois over $\mathbb{Q}$. 
Note that 
$\mathfrak{c}(K_{\mathcal{O},\,G})$ is a
Galois over $\mathfrak{c}(K)$ and
\begin{equation}\label{GGG}
\mathrm{Gal}(\mathfrak{c}(K_{\mathcal{O},\,G})/\mathfrak{c}(K))=
\mathfrak{c}\,\mathrm{Gal}(K_{\mathcal{O},\,G}/K)\,\mathfrak{c}^{-1}|_{\mathfrak{c}(K_{\mathcal{O},\,G})}
\simeq\mathrm{Gal}(K_{\mathcal{O},\,G}/K).
\end{equation}
Furthermore, since 
\begin{equation}\label{cKK}
\mathfrak{c}(K)=K\quad\textrm{and}\quad\mathfrak{c}(\ell_\mathcal{O}N\mathcal{O}_K)=
\ell_\mathcal{O}N\mathcal{O}_K,
\end{equation}
one can consider the Artin map
\begin{equation*}
\left(\frac{\mathfrak{c}(K_{\mathcal{O},\,G})/\mathfrak{c}(K)}{\cdot}\right):I(\mathcal{O}_K,\,\ell_\mathcal{O}N)
\rightarrow\mathrm{Gal}(\mathfrak{c}(K_{\mathcal{O},\,G})/\mathfrak{c}(K)). 
\end{equation*}
We then find that for $\mathfrak{a}\in I(\mathcal{O}_K,\,\ell_\mathcal{O}N)$
\begin{eqnarray*}
\mathfrak{a}\in\mathrm{ker}
\left(\left(\frac{\mathfrak{c}(K_{\mathcal{O},\,G})/\mathfrak{c}(K)}{\cdot}\right)\right)
&\Longleftrightarrow&
\left(\frac{\mathfrak{c}(K_{\mathcal{O},\,G})/\mathfrak{c}(K)}{\mathfrak{c}(\mathfrak{c}(\mathfrak{a}))}\right)
=\mathrm{id}_{\mathfrak{c}(K_{\mathcal{O},\,G})}
\quad\textrm{because}~\mathfrak{c}\mathfrak{c}=\iota\\
&\Longleftrightarrow&
\mathfrak{c}\left(\frac{K_{\mathcal{O},\,G}/K}{\mathfrak{c}(\mathfrak{a})}\right)\mathfrak{c}^{-1}|_{\mathfrak{c}(K_{\mathcal{O},\,G})}=\mathrm{id}_{\mathfrak{c}(K_{\mathcal{O},\,G})}\\
&\Longleftrightarrow&\left(\frac{K_{\mathcal{O},\,G}/K}{\mathfrak{c}(\mathfrak{a})}\right)
=\mathrm{id}_{K_{\mathcal{O},\,G}}\\
&\Longleftrightarrow&\mathfrak{c}(\mathfrak{a})\in\mathrm{ker}
\left(\left(\frac{K_{\mathcal{O},\,G}/K}{\cdot}\right)\right)\\
&\Longleftrightarrow&\mathfrak{c}(\mathfrak{a})\in
P_G(\mathcal{O}_K,\,\ell_\mathcal{O}N,\,\ell_\mathcal{O}N)\\
&\Longleftrightarrow&\mathfrak{a}=\mathfrak{c}(\mathfrak{c}(\mathfrak{a}))
\in\left\{
\mathfrak{c}(\mathfrak{b})~|~\mathfrak{b}\in 
P_G(\mathcal{O}_K,\,\ell_\mathcal{O}N,\,\ell_\mathcal{O}N)\right\}\\
&\Longleftrightarrow&\mathfrak{a}\in 
P_G(\mathcal{O}_K,\,\ell_\mathcal{O}N,\,\ell_\mathcal{O}N)\\
&&\textrm{by (\ref{cKK}) and the definition of 
$P_G(\mathcal{O}_K,\,\ell_\mathcal{O}N,\,\ell_\mathcal{O}N)$}. 
\end{eqnarray*}
Note from (\ref{GGG}) that 
\begin{equation*}
|\mathrm{Gal}(\mathfrak{c}(K_{\mathcal{O},\,G})/\mathfrak{c}(K))|=|\mathrm{Gal}(K_{\mathcal{O},\,G}/K)|
=|I(\mathcal{O}_K,\,\ell_\mathcal{O}N)/P_G(\mathcal{O}_K,\,\ell_\mathcal{O}N,\,\ell_\mathcal{O}N)|.
\end{equation*}
Hence, the Artin map $\left(\frac{\mathfrak{c}(K_{\mathcal{O},\,G})/\mathfrak{c}(K)}{\cdot}\right)$
yields the isomorphism
\begin{equation*}
I(\mathcal{O}_K,\,\ell_\mathcal{O}N)/P_G(\mathcal{O}_K,\,\ell_\mathcal{O}N,\,\ell_\mathcal{O}N)
\stackrel{\sim}{\rightarrow}\mathrm{Gal}(\mathfrak{c}(K_{\mathcal{O},\,G})/\mathfrak{c}(K))=
\mathrm{Gal}(\mathfrak{c}(K_{\mathcal{O},\,G})/K). 
\end{equation*}
The existence theorem of class field theory
(\cite[Theorem 8.6]{Cox} or \cite[$\S$V.9]{Janusz}) implies that $\mathfrak{c}(K_{\mathcal{O},\,G})=
K_{\mathcal{O},\,G}$, which proves that $K_{\mathcal{O},\,G}$ is Galois over $\mathbb{Q}$. 
\end{proof}

\begin{lemma}\label{equivalence}
In particular, let $K=\mathbb{Q}(\sqrt{-n})$ and $\mathcal{O}=\mathbb{Z}[\sqrt{-n}]$. 
Let $p$ be a prime not dividing $2nN$. Then the followings are equivalent\,\textup{:}
\begin{enumerate}
\item[\textup{(i)}] $p=x^2+ny^2$ for some $x,\,y\in\mathbb{Z}$
such that $x+N\mathbb{Z}\in G$ and $y\equiv0\Mod{N}$. 
\item[\textup{(ii)}] $p\mathcal{O}_K=\mathfrak{p}\overline{\mathfrak{p}}$
for a prime ideal $\mathfrak{p}$ of $\mathcal{O}_K$ for which
$\mathfrak{p}\neq\overline{\mathfrak{p}}$ and $\mathfrak{p}=\nu\mathcal{O}_K$ with
$\nu\in\mathcal{O}$ satisfying   
$\nu\equiv m\Mod{N\mathcal{O}}$ for some $m\in\mathbb{Z}$ such that $m+N\mathbb{Z}\in G$. 
\item[\textup{(iii)}] $p\mathcal{O}_K=\mathfrak{p}\overline{\mathfrak{p}}$
for a prime ideal $\mathfrak{p}$ of $\mathcal{O}_K$ such that
$\mathfrak{p}\neq\overline{\mathfrak{p}}$ and $\mathfrak{p}\in P_G(\mathcal{O}_K,\,\ell_\mathcal{O}N,\,\ell_\mathcal{O}N)$. 
\item[\textup{(iv)}] $p\mathcal{O}_K=\mathfrak{p}\overline{\mathfrak{p}}$
for a prime ideal $\mathfrak{p}$ of $\mathcal{O}_K$ such that
$\mathfrak{p}\neq\overline{\mathfrak{p}}$ and $\left(\frac{K_{\mathcal{O},\,G}/K}{\mathfrak{p}}
\right)=\mathrm{id}_{K_{\mathcal{O},\,G}}$. 
\item[\textup{(v)}] $p\mathcal{O}_K=\mathfrak{p}\overline{\mathfrak{p}}$
for a prime ideal $\mathfrak{p}$ of $\mathcal{O}_K$ such that
$\mathfrak{p}\neq\overline{\mathfrak{p}}$ and $\mathfrak{p}$ splits completely in $K_{\mathcal{O},\,G}$. 
\item[\textup{(vi)}] $p$ splits completely in $K_{\mathcal{O},\,G}$. 
\end{enumerate}
\end{lemma}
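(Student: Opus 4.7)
The plan is to prove the six conditions equivalent by establishing the cyclic chain (i)$\Leftrightarrow$(ii)$\Leftrightarrow$(iii)$\Leftrightarrow$(iv)$\Leftrightarrow$(v)$\Leftrightarrow$(vi), each step invoking one of the named tools already developed: norm arithmetic in $\mathcal{O}=\mathbb{Z}[\sqrt{-n}]$ for (i)$\Leftrightarrow$(ii), the bridging Lemma \ref{nulemma} for (ii)$\Leftrightarrow$(iii), the idelic/class-field-theoretic definition of $K_{\mathcal{O},G}$ at the end of Section 2 for (iii)$\Leftrightarrow$(iv), the standard splitting criterion in an abelian extension for (iv)$\Leftrightarrow$(v), and finally the Galois property of $K_{\mathcal{O},G}/\mathbb{Q}$ from Lemma \ref{GoverQ} for (v)$\Leftrightarrow$(vi). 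A useful preliminary remark is that since $\mathcal{O}=\mathbb{Z}[\sqrt{-n}]$, the conductor satisfies $\ell_\mathcal{O}\in\{1,2\}$, so the hypothesis $p\nmid 2nN$ automatically gives $\gcd(p,\ell_\mathcal{O}N)=1$ and $p\nmid d_K$, which makes $p$ unramified in $\mathcal{O}_K$ and allows free movement between ideals of $\mathcal{O}$ and $\mathcal{O}_K$ via Lemma \ref{isomorphism}.

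For (i)$\Leftrightarrow$(ii) I would set $\nu=x+y\sqrt{-n}\in\mathcal{O}$ and note that $\nu\overline{\nu}=x^2+ny^2=p$ forces $\mathfrak{p}=\nu\mathcal{O}_K$ to be a prime of $\mathcal{O}_K$ above $p$ with $\mathfrak{p}\neq\overline{\mathfrak{p}}$ by unramifiedness; the conditions $x+N\mathbb{Z}\in G$ and $y\equiv 0\Mod{N}$ translate at once to $\nu\equiv x\Mod{N\mathcal{O}}$, and conversely writing any $\nu\in\mathcal{O}$ as $x+y\sqrt{-n}$ reverses the correspondence (noting $\nu\overline{\nu}>0$ so the equality $\nu\overline{\nu}\mathcal{O}_K=p\mathcal{O}_K$ of principal ideals lifts to $\nu\overline{\nu}=p$ in $\mathbb{Z}$). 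Step (ii)$\Leftrightarrow$(iii) is then a direct application of Lemma \ref{nulemma}, which was designed precisely to convert the congruence of $\nu$ modulo $N\mathcal{O}$ into membership of $\nu\mathcal{O}_K$ in $P_G(\mathcal{O}_K,\ell_\mathcal{O}N,\ell_\mathcal{O}N)$. The equivalence (iii)$\Leftrightarrow$(iv) is immediate from the very defining property of $K_{\mathcal{O},G}$: the Artin map for the modulus $\ell_\mathcal{O}N\mathcal{O}_K$ has kernel exactly $P_G(\mathcal{O}_K,\ell_\mathcal{O}N,\ell_\mathcal{O}N)$. For (iv)$\Leftrightarrow$(v) one invokes the standard fact that an unramified prime in an abelian extension splits completely if and only if its Artin symbol is trivial.

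The only step that requires genuine new input is (v)$\Leftrightarrow$(vi). The direction (vi)$\Rightarrow$(v) is easy: complete splitting in $K_{\mathcal{O},G}$ implies complete splitting in the subfield $K$, and the primes $\mathfrak{p},\overline{\mathfrak{p}}$ above $p$ inherit complete splitting in $K_{\mathcal{O},G}$. For (v)$\Rightarrow$(vi) I would use Lemma \ref{GoverQ}: since $K_{\mathcal{O},G}/\mathbb{Q}$ is Galois, the complex conjugation $\mathfrak{c}$ is an element of $\mathrm{Gal}(K_{\mathcal{O},G}/\mathbb{Q})$ that maps primes of $K_{\mathcal{O},G}$ above $\mathfrak{p}$ bijectively to primes above $\overline{\mathfrak{p}}=\mathfrak{c}(\mathfrak{p})$. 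Hence $\overline{\mathfrak{p}}$ has the same splitting type as $\mathfrak{p}$, so complete splitting of $\mathfrak{p}$ upgrades to complete splitting of $p\mathcal{O}_K=\mathfrak{p}\overline{\mathfrak{p}}$ in $K_{\mathcal{O},G}$.

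The main obstacle is not conceptual but bookkeeping: one must consistently track that $\mathfrak{p}$ remains prime to $\ell_\mathcal{O}N$ throughout (handled by the preliminary observation on $\ell_\mathcal{O}$), that the specific choice of $\nu\in\mathcal{O}$ (as opposed to $\mathcal{O}_K$) is compatible with the congruence defining $P_G(\mathcal{O}_K,\ell_\mathcal{O}N,\ell_\mathcal{O}N)$ (handled by Lemma \ref{nulemma}), and that the splitting condition for $\overline{\mathfrak{p}}$ is governed by the Galois action over $\mathbb{Q}$ (handled by Lemma \ref{GoverQ}). Once these three ingredients are in place, each biconditional reduces to an application of a result proved earlier in the paper.
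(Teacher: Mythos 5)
Your proposal is correct and follows essentially the same route as the paper's proof: (i)$\Leftrightarrow$(ii) by the norm computation $\nu\overline{\nu}=x^2+ny^2$ together with $N\mathcal{O}=[N\sqrt{-n},\,N]$ and the unit argument; (ii)$\Leftrightarrow$(iii) by passing between congruences modulo $N\mathcal{O}$ and modulo $\ell_\mathcal{O}N\mathcal{O}_K$ (the paper redoes this computation inline, while you invoke Lemma \ref{nulemma}, which packages the same argument -- a slightly cleaner citation, not a different method); (iii)$\Leftrightarrow$(iv) via the Artin map for the modulus $\ell_\mathcal{O}N\mathcal{O}_K$ having kernel $P_G(\mathcal{O}_K,\,\ell_\mathcal{O}N,\,\ell_\mathcal{O}N)$; (iv)$\Leftrightarrow$(v) via the order of the Artin symbol being the inertia degree; and (v)$\Leftrightarrow$(vi) via Lemma \ref{GoverQ}, exactly as in the paper. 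Two small corrections. First, your preliminary claim that $\ell_\mathcal{O}\in\{1,2\}$ for $\mathcal{O}=\mathbb{Z}[\sqrt{-n}]$ is false in general, because $n$ need not be squarefree (e.g.\ $n=9$ gives $\mathcal{O}=\mathbb{Z}[3\sqrt{-1}]$ with $\ell_\mathcal{O}=3$, and $n=12$ gives $\ell_\mathcal{O}=4$); the correct and sufficient observation, which the paper uses, is that $-4n=\ell_\mathcal{O}^2d_K$, so $p\nmid 2nN$ already forces $p\nmid\ell_\mathcal{O}d_KN$, and this is all your bookkeeping actually needs. Second, in the direction (iii)$\Rightarrow$(ii) you must first extract from the membership $\mathfrak{p}\in P_G(\mathcal{O}_K,\,\ell_\mathcal{O}N,\,\ell_\mathcal{O}N)$ (a priori only a statement that $\mathfrak{p}$ lies in the group generated by the distinguished principal ideals) an actual generator $\nu\in\mathcal{O}_K$ with $\nu\equiv a\Mod{\ell_\mathcal{O}N\mathcal{O}_K}$ and $a+N\mathbb{Z}\in G$, before Lemma \ref{nulemma} or the containment $\ell_\mathcal{O}N\mathcal{O}_K\subseteq N\mathcal{O}$ can be applied; this is routine (write $\mathfrak{p}$ as a quotient of products of generators, clear denominators, and invert the integer residue modulo $\ell_\mathcal{O}N$), and the paper asserts it without comment as well, but your write-up should make the step explicit rather than attributing it to Lemma \ref{nulemma} itself.
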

\begin{proof}
The discriminant of $\mathcal{O}=\mathbb{Z}[\sqrt{-n}]$ is $-4n$, and so $-4n=\ell_\mathcal{O}^2d_K$. 
Since $p$ does not divide $2nN$, it
is unramified in $K$. 
\par
Assume that (i) holds, and hence $p=(x+\sqrt{-n}y)(x-\sqrt{-n}y)$.  
If we set $\nu=x+\sqrt{-n}y$ and $\mathfrak{p}=\nu\mathcal{O}_K$, then 
we have $p\mathcal{O}_K=\mathfrak{p}\overline{\mathfrak{p}}$ as the prime
ideal factorization of $p\mathcal{O}_K$. 
Observe that
$\mathfrak{p}\neq\overline{\mathfrak{p}}$ because $p$ is unramified in $K$. 
Moreover, since $y\equiv0\Mod{N}$, we see that
$\nu\equiv x\Mod{N\mathcal{O}}$. 
Therefore (ii) is true.
\par
Conversely, assume that (ii) holds. 
Then there is a pair of integers $x$ and $y$ which satisfies 
$p\mathcal{O}_K=\mathfrak{p}\overline{\mathfrak{p}}$ with
$\mathfrak{p}=(x+\sqrt{-n}y)\mathcal{O}_K$, and 
$x+\sqrt{-n}y\equiv m\Mod{N\mathcal{O}}$ with $m\in\mathbb{Z}$ such that $m+N\mathbb{Z}\in G$. 
It then follows from the fact $\mathcal{O}_K^\times\cap\mathbb{Q}_{>0}=\{1\}$ that
$p=x^2+ny^2$. Moreover, since $N\mathcal{O}=[N\sqrt{-n},\,N]$, we get that
$x+N\mathbb{Z}=m+N\mathbb{Z}\in G$ and $y\equiv0\Mod{N}$, which yields (i). 
\par
Assume that (ii) holds. Since $p$ is relatively prime to $4nN=-\ell_\mathcal{O}^2d_KN$ and
$p\mathcal{O}_K=\mathfrak{p}\overline{\mathfrak{p}}$ with $\mathfrak{p}=\nu\mathcal{O}_K$,
$\mathfrak{p}$ is prime to $\ell_\mathcal{O}N$. 
Since $\nu\equiv m\Mod{N\mathcal{O}}$ and
$N\mathcal{O}=[N\ell_\mathcal{O}\tau_K,\,N]$, we deduce that
$\nu\equiv m+Nk\Mod{\ell_\mathcal{O}N\mathcal{O}_K}$ for some $k\in\mathbb{Z}$. 
Note that $(m+Nk)+N\mathbb{Z}=m+N\mathbb{Z}\in G$. 
Thus $\mathfrak{p}=\nu\mathcal{O}_K$ belongs to $P_G(\mathcal{O}_K,\,\ell_\mathcal{O}N,\,\ell_\mathcal{O}N)$,
which proves (iii).
\par
Conversely, assume that (iii) holds. Then we have
$\mathfrak{p}=\nu\mathcal{O}_K$ with $\nu\in\mathcal{O}_K$ satisfying
$\nu\equiv a\Mod{\ell_\mathcal{O}N\mathcal{O}_K}$ for some $a\in\mathbb{Z}$ such that $a+N\mathbb{Z}\in G$. 
Then (ii) follows from the fact $\ell_\mathcal{O} N\mathcal{O}_K\subseteq N\mathcal{O}$.
\par
The equivalence of (iii) and (iv) are due to the fact that
the Artin map $\left(\frac{K_{\mathcal{O},\,G}/K}{\cdot}\right):
I(\mathcal{O}_K,\,\ell_\mathcal{O}N)\rightarrow\mathrm{Gal}(K_{\mathcal{O},\,G}/K)$ induces
an isomorphism $I(\mathcal{O}_K,\,\ell_\mathcal{O}N)/P_G(\mathcal{O}_K,\,\ell_\mathcal{O}N,\,\ell_\mathcal{O}N)\xrightarrow{\sim}
\mathrm{Gal}(K_{\mathcal{O},\,G}/K)$.
\par
The equivalence of (iv) and (v) is obtained by the fact that the order of the Artin symbol
 $\left(\frac{K_{\mathcal{O},\,G}/K}{\mathfrak{p}}\right)$ 
in $\mathrm{Gal}(K_{\mathcal{O},\,G}/K)$ 
is the inertia degree of $\mathfrak{p}$ in the field extension $K_{\mathcal{O},\,G}/K$
(\cite[$\S$III.1 and $\S$III.2]{Janusz}). 
\par
The equivalence of (v) and (vi) is derived from the fact that
$K_{\mathcal{O},\,G}$ is Galois over $\mathbb{Q}$ by Lemma \ref{GoverQ}. 
\end{proof}

For  a prime $p$, we let 
\begin{equation*}
\left(\frac{d_K}{p}\right)=\left\{\begin{array}{ll}
\textrm{the Legendre symbol} & \textrm{if $p$ is odd},\\
\textrm{the Kronecker symbol} & \textrm{if $p=2$}. 
\end{array}\right.
\end{equation*}

\begin{lemma}\label{real}
Let $L$ be a finite extension of $K$ which is Galois over $\mathbb{Q}$. 
\begin{enumerate}
\item[\textup{(i)}] Then there is a real algebraic integer $\alpha$ which generates $L$ over $K$. 
\item[\textup{(ii)}] Given $\alpha$ as in \textup{(i)}, let $f(X)\in\mathbb{Z}[X]$ be its minimal polynomial over $K$. If
$p$ is a prime not dividing the discriminant of $f(X)$, then 
\begin{equation*}
\textrm{$p$ splits completely in $L$}~\Longleftrightarrow~
\left(\frac{d_K}{p}\right)=1~\textrm{and}~
f(X)\equiv0\Mod{p}~\textrm{has an integer solution}. 
\end{equation*}
\end{enumerate}
\end{lemma}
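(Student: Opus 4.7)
The plan is to construct a real generator for (i) via Galois theory, and then to derive (ii) from Dedekind's factorization theorem together with the Galois property of $L/\mathbb{Q}$.

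For (i), since $L/\mathbb{Q}$ is Galois, complex conjugation $\tau$ restricts to an element of $\mathrm{Gal}(L/\mathbb{Q})$, and I would take $L^+$ to be its fixed field, which lies in $\mathbb{R}$. Because $\tau$ acts nontrivially on the imaginary quadratic field $K$, one gets $K\cap L^+=\mathbb{Q}$ and $[L:L^+]=2$, whence $[L^+:\mathbb{Q}]=[L:K]$ and $KL^+=L$. The primitive element theorem gives $L^+=\mathbb{Q}(\alpha_0)$ for some $\alpha_0\in L^+\subset\mathbb{R}$, and after clearing denominators I may replace $\alpha_0$ by a real algebraic integer $\alpha$ with $K(\alpha)=KL^+=L$, settling (i). A degree comparison $[\mathbb{Q}(\alpha):\mathbb{Q}]=[L^+:\mathbb{Q}]=[L:K]=[K(\alpha):K]$ then shows that the minimal polynomial of $\alpha$ over $K$ coincides with the one over $\mathbb{Q}$, hence is monic in $\mathbb{Z}[X]$; this justifies the hypothesis $f(X)\in\mathbb{Z}[X]$ used in (ii).

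For (ii), I would first note that the hypothesis $p\nmid\mathrm{disc}(f)$ ensures the applicability of Dedekind's factorization theorem to the orders $\mathbb{Z}[\alpha]\subseteq\mathcal{O}_{L^+}$ and $\mathcal{O}_K[\alpha]\subseteq\mathcal{O}_L$, so that the factorization of $p\mathcal{O}_{L^+}$ (resp.\ of $\mathfrak{p}\mathcal{O}_L$ for any prime $\mathfrak{p}$ of $K$ above $p$) matches the irreducible factorization of $f$ modulo $p$ (resp.\ modulo $\mathfrak{p}$). If $p$ splits completely in $L$, then it splits completely in $K$, giving $\bigl(\tfrac{d_K}{p}\bigr)=1$, and in $L^+$, so $f$ factors into distinct linear factors mod $p$ and in particular has an integer root. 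Conversely, assume $\bigl(\tfrac{d_K}{p}\bigr)=1$ and $f(a)\equiv0\Mod{p}$ for some $a\in\mathbb{Z}$. Then $p\mathcal{O}_K=\mathfrak{p}\overline{\mathfrak{p}}$ with $\mathcal{O}_K/\mathfrak{p}\simeq\mathbb{F}_p$, and the image of $f$ in $(\mathcal{O}_K/\mathfrak{p})[X]$ has $X-a$ as a linear factor. Since $L/\mathbb{Q}$, and hence $L/K$, is Galois, all primes of $L$ above $\mathfrak{p}$ share a common residue degree, so by the Dedekind correspondence the irreducible factors of $f\bmod\mathfrak{p}$ all share a common degree; the presence of a linear factor then forces every factor to be linear, and $\mathfrak{p}$ splits completely in $L$. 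The same reasoning applies to $\overline{\mathfrak{p}}$, so $p$ splits completely in $L$.

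The crucial step is the use of the Galois hypothesis to promote the existence of a single linear factor of $f\bmod\mathfrak{p}$ to full splitting: without $L/K$ Galois one could only conclude that $\mathfrak{p}$ has at least one degree-one prime factor in $L$. This is precisely why Lemma \ref{GoverQ}, which supplies Galois-ness of $K_{\mathcal{O},\,G}/\mathbb{Q}$, is required in the intended application of the lemma.
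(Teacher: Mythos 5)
Your proof is correct and is essentially the same argument as the one behind the paper's proof, which consists only of the citation \cite[Proposition 5.29]{Cox}: there too $\alpha$ is taken to be an integral generator of the real fixed field of complex conjugation (possible since $L/\mathbb{Q}$ is Galois and $K$ is imaginary), and the splitting criterion is obtained from the Dedekind--Kummer correspondence together with the Galois hypothesis, which equalizes the degrees of the irreducible factors of $f$ modulo a prime $\mathfrak{p}$ above $p$ so that one linear factor forces complete splitting. The only cosmetic difference is that in the forward direction you route through $L^{+}=\mathbb{Q}(\alpha)$ and $\mathbb{Z}[\alpha]$, whereas one can equally read off the integer root from the factorization of $f$ modulo $\mathfrak{p}$ via $\mathcal{O}_K/\mathfrak{p}\simeq\mathbb{F}_p$; your justification that $\mathrm{irr}(\alpha,K)=\mathrm{irr}(\alpha,\mathbb{Q})\in\mathbb{Z}[X]$ is a worthwhile detail that the statement implicitly uses.
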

\begin{proof}
See \cite[Proposition 5.29]{Cox}. 
\end{proof}

\begin{theorem}\label{x^2+ny^2}
Let $n$ be a positive integer.
Then there is a monic irreducible polynomial $f(X)\in\mathbb{Z}[X]$ for which 
if $p$ is a prime dividing neither $2nN$ nor the discriminant of $f(X)$, then 
\begin{eqnarray*}
&&\textrm{$p=x^2+ny^2$ for some $x,\,y\in\mathbb{Z}$
such that $x+N\mathbb{Z}\in G$ and $y\equiv0\Mod{N}$}\\
&\Longleftrightarrow& 
\left(\frac{-n}{p}\right)=1~\textrm{and $f(X)\equiv0\Mod{p}$ has an integer solution. }
\end{eqnarray*}
\end{theorem}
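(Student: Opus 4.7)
The plan is to specialise the general machinery of the previous sections to the setting $K=\mathbb{Q}(\sqrt{-n})$ and $\mathcal{O}=\mathbb{Z}[\sqrt{-n}]$ (so that $D_\mathcal{O}=-4n$ and $\ell_\mathcal{O}^2 d_K=-4n$), and then to reduce the question about representations $p=x^2+ny^2$ with congruence conditions to a statement about complete splitting in the class field $K_{\mathcal{O},\,G}$. Lemma \ref{equivalence} does exactly this: for a prime $p$ not dividing $2nN$, the representability condition in the theorem is equivalent to $p$ splitting completely in $K_{\mathcal{O},\,G}$. So the entire problem is recast as: find a polynomial $f(X)$ whose splitting behaviour modulo $p$ (together with the quadratic character $\left(\tfrac{-n}{p}\right)$) detects complete splitting of $p$ in $K_{\mathcal{O},\,G}$.

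To produce $f(X)$, I would invoke Lemma \ref{GoverQ} to observe that $K_{\mathcal{O},\,G}$ is Galois over $\mathbb{Q}$, then apply Lemma \ref{real}\,(i) to obtain a real algebraic integer $\alpha$ generating $K_{\mathcal{O},\,G}$ over $K$, and take $f(X)\in\mathbb{Z}[X]$ to be the minimal polynomial of $\alpha$ over $K$. Its coefficients lie in $\mathcal{O}_K\cap\mathbb{R}=\mathbb{Z}$ (since $K$ is imaginary quadratic), and $f$ is monic irreducible; irreducibility over $\mathbb{Q}$ is automatic because $K_{\mathcal{O},\,G}=\mathbb{Q}(\alpha)\cdot K$ forces $[\mathbb{Q}(\alpha):\mathbb{Q}]=[K_{\mathcal{O},\,G}:K]=\deg f$. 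Then for any prime $p$ not dividing $\mathrm{disc}(f)$, Lemma \ref{real}\,(ii) gives
\begin{equation*}
p~\text{splits completely in}~K_{\mathcal{O},\,G}\quad\Longleftrightarrow\quad
\left(\frac{d_K}{p}\right)=1~\text{and}~f(X)\equiv0\Mod{p}~\text{has an integer solution}.
\end{equation*}

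What remains is the mild arithmetic verification that $\left(\tfrac{-n}{p}\right)=\left(\tfrac{d_K}{p}\right)$ under our running hypothesis $p\nmid 2nN$. Writing $n=n_0t^2$ with $n_0$ squarefree, $K=\mathbb{Q}(\sqrt{-n_0})$ and $d_K\in\{-n_0,\,-4n_0\}$, so $-4n=\ell_\mathcal{O}^2d_K$ with $\ell_\mathcal{O}\in\{2t,\,t\}$. Since $p$ is odd and $p\nmid n$, we have $p\nmid\ell_\mathcal{O}$, and hence
\begin{equation*}
\left(\frac{-n}{p}\right)=\left(\frac{-4n}{p}\right)=\left(\frac{\ell_\mathcal{O}^2d_K}{p}\right)=\left(\frac{d_K}{p}\right).
\end{equation*}
Combining this identification with the equivalence from Lemma \ref{equivalence} and the splitting criterion from Lemma \ref{real}\,(ii) yields exactly the biconditional of the theorem. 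There is no real obstacle here beyond the bookkeeping in the last step: everything substantive has been built into Lemmas \ref{equivalence}, \ref{GoverQ} and \ref{real}, and the theorem is the clean packaging of their combination once the two Legendre/Kronecker symbols are reconciled.
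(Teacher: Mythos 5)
Your proposal is correct and follows essentially the same route as the paper: reduce to complete splitting in $K_{\mathcal{O},\,G}$ via Lemma \ref{equivalence}, obtain $f(X)$ from Lemmas \ref{GoverQ} and \ref{real}\,(i), and conclude with Lemma \ref{real}\,(ii). Your explicit check that $\left(\frac{-n}{p}\right)=\left(\frac{d_K}{p}\right)$ for $p\nmid 2nN$ is a welcome detail the paper only asserts implicitly via $-4n=\ell_\mathcal{O}^2 d_K$.
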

\begin{proof}
Let $K=\mathbb{Q}(\sqrt{-n})$ and $\mathcal{O}=\mathbb{Z}[\sqrt{-n}]$, and so $-4n=\ell_\mathcal{O}^2d_K$. 
By Lemmas \ref{GoverQ} and \ref{real} (i), there exists a real algebraic integer $\alpha$ such that
$K_{\mathcal{O},\,G}=K(\alpha)$. Let $f(X)\in\mathbb{Z}[X]$ be the minimal polynomial of $\alpha$ over $K$. 
Then we deduce by Lemmas \ref{equivalence} and \ref{real} (ii) that
for a prime dividing neither $2nN$ nor the discriminant of $f(X)$
\begin{eqnarray*}
&&\textrm{$p=x^2+ny^2$ for some $x,\,y\in\mathbb{Z}$
such that $x+N\mathbb{Z}\in G$ and $y\equiv0\Mod{N}$}\\
&\Longleftrightarrow& 
\textrm{$p$ splits completely in $K_{\mathcal{O},\,G}$}\\
&\Longleftrightarrow& 
\left(\frac{-n}{p}\right)=\left(\frac{d_K}{p}\right)=1~\textrm{and $f(X)\equiv0\Mod{p}$ has an integer solution. }
\end{eqnarray*}
\end{proof}

\section {Examples of minimal polynomials over $\mathbb{Q}$}

Recall by Lemma \ref{GoverQ} that $K_{\mathcal{O},\,G}$ is Galois over $\mathbb{Q}$. 
In this section, we shall construct the definite form class group isomorphic to
$\mathrm{Gal}(K_{\mathcal{O},\,G}/\mathbb{Q})$.
Furthermore, we shall present some examples of 
the minimal polynomial of a primitive generator of $K_{\mathcal{O},\,\{1+N\mathbb{Z}\}}$ over $\mathbb{Q}$. 
\par
Let $\widetilde{\mathfrak{c}}$ be the element of
$\mathrm{Gal}(K_{\mathcal{O},\,G}/\mathbb{Q})$ obtained by restricting 
the complex conjugation $\mathfrak{c}$ to $K_{\mathcal{O},\,G}$.

\begin{lemma}\label{semi}
We have a decomposition
\begin{equation*}
\mathrm{Gal}(K_{\mathcal{O},\,G}/\mathbb{Q})=
\mathrm{Gal}(K_{\mathcal{O},\,G}/K)\rtimes
\langle\,\widetilde{\mathfrak{c}}\,\rangle\quad(\simeq\mathrm{Gal}(K_{\mathcal{O},\,G}/K)\rtimes(\mathbb{Z}/2\mathbb{Z})),
\end{equation*}
where $\widetilde{\mathfrak{c}}$ acts on $\mathrm{Gal}(K_{\mathcal{O},\,G}/\mathbb{Q})$
by conjugation. 
\end{lemma}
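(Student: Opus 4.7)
The plan is to deduce this essentially from standard group theory applied to the tower $\mathbb{Q} \subset K \subset K_{\mathcal{O},\,G}$, using Lemma \ref{GoverQ} to know that $K_{\mathcal{O},\,G}/\mathbb{Q}$ is Galois and that the complex conjugation $\mathfrak{c}$ stabilises $K_{\mathcal{O},\,G}$ (so that $\widetilde{\mathfrak{c}}$ is well defined as an element of $\mathrm{Gal}(K_{\mathcal{O},\,G}/\mathbb{Q})$).

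First I would observe that since $K/\mathbb{Q}$ is itself Galois (as $K$ is imaginary quadratic) and $K_{\mathcal{O},\,G}/\mathbb{Q}$ is Galois, the subgroup $\mathrm{Gal}(K_{\mathcal{O},\,G}/K)$ is normal in $\mathrm{Gal}(K_{\mathcal{O},\,G}/\mathbb{Q})$, and the short exact sequence
\begin{equation*}
1 \longrightarrow \mathrm{Gal}(K_{\mathcal{O},\,G}/K) \longrightarrow \mathrm{Gal}(K_{\mathcal{O},\,G}/\mathbb{Q}) \longrightarrow \mathrm{Gal}(K/\mathbb{Q}) \longrightarrow 1
\end{equation*}
is exact, with $\mathrm{Gal}(K/\mathbb{Q}) \simeq \mathbb{Z}/2\mathbb{Z}$ generated by the restriction of $\mathfrak{c}$.

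Next I would exhibit a splitting. Since $\widetilde{\mathfrak{c}}^2 = \mathrm{id}_{K_{\mathcal{O},\,G}}$ (complex conjugation being an involution) and $\widetilde{\mathfrak{c}}|_K$ is the nontrivial element of $\mathrm{Gal}(K/\mathbb{Q})$ (because $K$ is imaginary quadratic, so $\mathfrak{c}$ does not fix $K$ pointwise), the element $\widetilde{\mathfrak{c}}$ lies outside $\mathrm{Gal}(K_{\mathcal{O},\,G}/K)$ and generates a subgroup $\langle \widetilde{\mathfrak{c}} \rangle$ of order $2$ that maps isomorphically to $\mathrm{Gal}(K/\mathbb{Q})$ under the right-hand quotient map. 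Hence $\langle \widetilde{\mathfrak{c}} \rangle \cap \mathrm{Gal}(K_{\mathcal{O},\,G}/K) = \{\mathrm{id}\}$ and the exact sequence splits.

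Finally I would conclude by invoking the standard fact that a split short exact sequence of groups $1 \to N \to G \to H \to 1$ yields $G \simeq N \rtimes H$, with $H$ acting on $N$ by conjugation inside $G$. Applied here this gives
\begin{equation*}
\mathrm{Gal}(K_{\mathcal{O},\,G}/\mathbb{Q}) = \mathrm{Gal}(K_{\mathcal{O},\,G}/K) \rtimes \langle \widetilde{\mathfrak{c}} \rangle,
\end{equation*}
with $\widetilde{\mathfrak{c}}$ acting on $\mathrm{Gal}(K_{\mathcal{O},\,G}/K)$ by conjugation inside $\mathrm{Gal}(K_{\mathcal{O},\,G}/\mathbb{Q})$, exactly as stated. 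No step here is a genuine obstacle; the only point requiring minor care is the well-definedness of $\widetilde{\mathfrak{c}}$, which was already secured in the proof of Lemma \ref{GoverQ} via $\mathfrak{c}(K_{\mathcal{O},\,G}) = K_{\mathcal{O},\,G}$.
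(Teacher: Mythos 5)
Your proof is correct and follows essentially the same route as the paper: normality of $\mathrm{Gal}(K_{\mathcal{O},\,G}/K)$ as an index-two (equivalently, via $K/\mathbb{Q}$ Galois) subgroup, plus the observation that the involution $\widetilde{\mathfrak{c}}$ lies outside it and hence splits the extension, giving the semidirect product with conjugation action. The only difference is cosmetic: you phrase it through the split short exact sequence, while the paper states the index-two argument directly; both hinge on $\mathfrak{c}(K_{\mathcal{O},\,G})=K_{\mathcal{O},\,G}$ from Lemma \ref{GoverQ}.
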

\begin{proof}
Since $[\mathrm{Gal}(K_{\mathcal{O},\,G}/\mathbb{Q}):\mathrm{Gal}(K_{\mathcal{O},\,G}/K)]=
[K:\mathbb{Q}]=2$, $\mathrm{Gal}(K_{\mathcal{O},\,G}/K)$ is 
normal in $\mathrm{Gal}(K_{\mathcal{O},\,G}/\mathbb{Q})$.
Moreover, since
$\widetilde{\mathfrak{c}}\in\mathrm{Gal}(K_{\mathcal{O},\,G}/\mathbb{Q})
\setminus\mathrm{Gal}(K_{\mathcal{O},\,G}/K)$, 
we get that
\begin{equation*}
\mathrm{Gal}(K_{\mathcal{O},\,G}/\mathbb{Q})=\mathrm{Gal}(K_{\mathcal{O},\,G}/K)\rtimes
\langle\,\widetilde{\mathfrak{c}}\,\rangle\quad
(\simeq\mathrm{Gal}(K_{\mathcal{O},\,G}/K)\rtimes(\mathbb{Z}/2\mathbb{Z})),
\end{equation*}
where $\widetilde{\mathfrak{c}}$ acts on $\mathrm{Gal}(K_{\mathcal{O},\,G}/K)$ by conjugation. 
\end{proof}

Given $Q=ax^2+bxy+cy^2\in\mathcal{Q}(D_\mathcal{O},\,N)$ 
we mean by $-Q=(-1)Q$ the negative definite form $-ax^2-bxy-cy^2$.
The group 
$\Gamma_G$ acts on the set 
\begin{equation*}
\mathcal{Q}^\pm(D_\mathcal{O},\,N)=\left\{Q,\,-Q~|~Q\in\mathcal{Q}(D_\mathcal{O},\,N)\right\}
\end{equation*}
from the right as
\begin{equation*}
Q\left(\begin{bmatrix}x\\y\end{bmatrix}\right)^\gamma=Q\left(\gamma\begin{bmatrix}x\\y\end{bmatrix}\right)\quad(Q\in\mathcal{Q}^\pm(D_\mathcal{O},\,N),~\gamma\in\Gamma_G).
\end{equation*}
We then obtain an equivalence relation $\sim^\pm_{\Gamma_G}$ as follows\,:
for $Q,\,Q'\in\mathcal{Q}^\pm(D_\mathcal{O},\,N)$
\begin{equation*}
Q\sim^\pm_{\Gamma_ G} Q'\quad\Longleftrightarrow\quad Q'=Q^\gamma~\textrm{for some}~\gamma\in\Gamma_G. 
\end{equation*}
Let $\mathcal{C}^\pm_{\Gamma_G}(D_\mathcal{O},\,N)=
\mathcal{Q}^\pm(D_\mathcal{O},\,N)/\sim^\pm_{\Gamma_G}$
be the set of equivalence classes. 

\begin{proposition}\label{definite}
One can give the set $\mathcal{C}^\pm_{\Gamma_G}(D_\mathcal{O},\,N)$ a group structure so that
the form class group 
$\mathcal{C}_{\Gamma_G}(D_\mathcal{O},\,N)$ becomes its subgroup and $[-Q_0]$ corresponds to
$\widetilde{\mathfrak{c}}$. 
\end{proposition}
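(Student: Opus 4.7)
The plan is to build an explicit bijection
\[
\Phi : \mathcal{C}^\pm_{\Gamma_G}(D_\mathcal{O},\,N) \longrightarrow \mathrm{Gal}(K_{\mathcal{O},\,G}/\mathbb{Q})
\]
and pull back the group structure through $\Phi$, arranging things so that $\mathcal{C}_{\Gamma_G}(D_\mathcal{O},\,N)$ becomes the preimage of $\mathrm{Gal}(K_{\mathcal{O},\,G}/K)$ and $[-Q_0]$ maps to $\widetilde{\mathfrak{c}}$. This reduces the proposition to a set-theoretic bijection together with a transport of structure.

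First I would verify that $\mathcal{C}^\pm_{\Gamma_G}(D_\mathcal{O},\,N)$ splits as a disjoint union of ``positive'' and ``negative'' classes. Since $D_\mathcal{O}<0$, every $Q\in\mathcal{Q}(D_\mathcal{O},\,N)$ is positive definite, and the leading coefficient of $Q^\gamma$ for $\gamma\in\mathrm{SL}_2(\mathbb{Z})$ equals $Q\left(\gamma\left[\begin{smallmatrix}1\\0\end{smallmatrix}\right]\right)>0$. Hence $Q\not\sim^\pm_{\Gamma_G}-Q$ for any $Q\in\mathcal{Q}(D_\mathcal{O},\,N)$, and the restrictions of $\sim^\pm_{\Gamma_G}$ to $\mathcal{Q}(D_\mathcal{O},\,N)$ and to $-\mathcal{Q}(D_\mathcal{O},\,N)$ coincide with $\sim_{\Gamma_G}$. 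In particular the map $[Q]\mapsto[-Q]$ is a well-defined bijection between $\mathcal{C}_{\Gamma_G}(D_\mathcal{O},\,N)$ and its complement in $\mathcal{C}^\pm_{\Gamma_G}(D_\mathcal{O},\,N)$, so the latter has exactly twice the cardinality.

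Next I would define
\[
\Phi([Q])=\phi_{\mathcal{O},\,G}([Q])\quad\textrm{and}\quad\Phi([-Q])=\widetilde{\mathfrak{c}}\cdot\phi_{\mathcal{O},\,G}([Q])\quad(Q\in\mathcal{Q}(D_\mathcal{O},\,N)),
\]
where $\phi_{\mathcal{O},\,G}$ is the isomorphism from Corollary \ref{phiOG}. Well-definedness is immediate from the previous paragraph. Lemma \ref{semi} gives the coset decomposition
\[
\mathrm{Gal}(K_{\mathcal{O},\,G}/\mathbb{Q})=\mathrm{Gal}(K_{\mathcal{O},\,G}/K)\,\sqcup\,\widetilde{\mathfrak{c}}\cdot\mathrm{Gal}(K_{\mathcal{O},\,G}/K),
\]
and combining this with the bijectivity of $\phi_{\mathcal{O},\,G}$ shows that $\Phi$ is a bijection of sets.

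Finally I would transport the group law on $\mathrm{Gal}(K_{\mathcal{O},\,G}/\mathbb{Q})$ to $\mathcal{C}^\pm_{\Gamma_G}(D_\mathcal{O},\,N)$ through $\Phi$. Under this structure, $\mathcal{C}_{\Gamma_G}(D_\mathcal{O},\,N)=\Phi^{-1}(\mathrm{Gal}(K_{\mathcal{O},\,G}/K))$ is automatically a subgroup, and the restriction $\Phi|_{\mathcal{C}_{\Gamma_G}(D_\mathcal{O},\,N)}=\phi_{\mathcal{O},\,G}$ recovers exactly the group law coming from Theorem \ref{Gformclassgroup}; moreover $\Phi([-Q_0])=\widetilde{\mathfrak{c}}\cdot\phi_{\mathcal{O},\,G}([Q_0])=\widetilde{\mathfrak{c}}$. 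There is no deep obstacle here: the only point really requiring care is the positivity argument keeping the two halves of $\mathcal{C}^\pm_{\Gamma_G}(D_\mathcal{O},\,N)$ disjoint, since it is what guarantees that $\Phi$ is well-defined on both pieces and hence that the transported operation makes sense.
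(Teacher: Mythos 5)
Your proof is correct and follows essentially the same route as the paper: the paper also defines a map from $\mathcal{C}^\pm_{\Gamma_G}(D_\mathcal{O},\,N)$ to $\mathrm{Gal}(K_{\mathcal{O},\,G}/\mathbb{Q})$ by sending a positive definite class through $\phi_{\mathcal{O},\,G}$ and a negative definite class to the $\widetilde{\mathfrak{c}}$-translate, checks well-definedness via preservation of definiteness under the $\Gamma_G$-action, and uses Lemma \ref{semi} together with the cardinality count $|\mathcal{C}^\pm_{\Gamma_G}(D_\mathcal{O},\,N)|=2|\mathcal{C}_{\Gamma_G}(D_\mathcal{O},\,N)|$ to get bijectivity before transporting the group structure. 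Your explicit leading-coefficient argument for the disjointness of the two halves is a welcome elaboration of the paper's brief remark that the action preserves definiteness.
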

\begin{proof}
Let $\phi=\phi_{\mathcal{O},\,G}:\mathcal{C}_{\Gamma_G}(D_\mathcal{O},\,N)
\stackrel{\sim}{\rightarrow}\mathrm{Gal}(K_{\mathcal{O}, \,G}/K) $ be the isomorphism stated in Corollary \ref{phiOG},
and consider the map
\begin{eqnarray*}
\phi^\pm~:~
\mathcal{C}^\pm_{\Gamma_G}(D_\mathcal{O},\,N) & \rightarrow & \mathrm{Gal}(K_{\mathcal{O}, \,G}/\mathbb{Q}) \\
\mathrm{[}Q] & \mapsto & \phi([\mathrm{sgn}(Q)Q])\,\widetilde{\mathfrak{c}}^{\,\frac{1-\mathrm{sgn}(Q)}{2}}
\end{eqnarray*}
where
\begin{equation*}
\mathrm{sgn}(Q)=\left\{\begin{array}{rl}
1 & \textrm{if $Q$ is positive definite},\\
-1 & \textrm{if $Q$ is negative definite}.
\end{array}\right.
\end{equation*}
Since the action of $\Gamma_G$ on 
$\mathcal{Q}^\pm(D_\mathcal{O},\,N)$ preserves definiteness,  we see that 
$\phi^\pm$ is well defined, 
$\mathcal{C}_{\Gamma_G}(D_\mathcal{O},\,N)$ is a subset of 
$\mathcal{C}^\pm_{\Gamma_G}(D_\mathcal{O},\,N)$
and
\begin{equation*}
|\mathcal{C}^\pm_{\Gamma_G}(D_\mathcal{O},\,N)|=
2|\mathcal{C}_{\Gamma_G}(D_\mathcal{O},\,N)|=|\mathrm{Gal}(K_{\mathcal{O},\,G}/\mathbb{Q})|.
\end{equation*}
 Furthermore, since 
$\mathrm{Gal}(K_{\mathcal{O},\,G}/\mathbb{Q})=
\mathrm{Gal}(K_{\mathcal{O},\,G}/K)\rtimes
\langle\,\widetilde{\mathfrak{c}}\,\rangle$ by Lemma \ref{semi}, we conclude that $\phi^\pm$ is 
bijective. This proves the proposition. 
\end{proof}

For an index vector $\mathbf{v}=\begin{bmatrix}v_1 & v_2\end{bmatrix}\in M_{1,\,2}(\mathbb{Q})\setminus
M_{1,\,2}(\mathbb{Z})$, the Siegel function $g_\mathbf{v}$ defined on $\mathbb{H}$ is given by
the infinite product expansion 
\begin{equation}\label{Siegel}
g_\mathbf{v}(\tau)=-q^{\frac{1}{2}\mathbf{B}_2(v_1)}e^{\pi\mathrm{i}v_2(v_1-1)}
(1-q_z)\prod_{n=1}^\infty(1-q^nq_z)(1-q^nq_z^{-1})\quad(\tau\in\mathbb{H})
\end{equation}
where $q=e^{2\pi\mathrm{i}\tau}$, $q_z=e^{2\pi\mathrm{i}z}$ with $z=v_1\tau+v_2$ and
$\mathbf{B}_2(x)=x^2-x+\frac{1}{6}$ is the second Bernoulli polynomial. 
Note that $g_\mathbf{v}$ has neither a zero nor a pole on $\mathbb{H}$. 
One can find in \cite[p. 29]{K-L} the original definition of $g_\mathbf{v}$ which is defined as the product of
a Klein form and the square of the Dedekind eta function.  

\begin{lemma}\label{Siegelarithmetic}
Let $N\geq2$ and $\mathbf{u},\,\mathbf{v}\in\frac{1}{N}M_{1,\,2}(\mathbb{Z})\setminus M_{1,\,2}(\mathbb{Z})$. 
\begin{enumerate}
\item[\textup{(i)}] If $\mathbf{u}\equiv\mathbf{v}\Mod{M_{1,\,2}(\mathbb{Z})}$ or
$\mathbf{u}\equiv-\mathbf{v}\Mod{M_{1,\,2}(\mathbb{Z})}$, then
$g_\mathbf{u}^{\frac{12N}{\gcd(6,\,N)}}=g_\mathbf{v}^{\frac{12N}{\gcd(6,\,N)}}$. 
\item[\textup{(ii)}] The function $g_\mathbf{v}^{\frac{12N}{\gcd(6,\,N)}}$ belongs to $\mathcal{F}_N$ and
\begin{equation*}
\left(g_\mathbf{v}^{\frac{12N}{\gcd(6,\,N)}}\right)^\gamma=
g_\mathbf{v\gamma}^{\frac{12N}{\gcd(6,\,N)}}\quad(\gamma\in\mathrm{GL}_2(\mathbb{Z})/\langle-I_2\rangle
\simeq\mathrm{Gal}(\mathcal{F}_N/\mathcal{F}_1)). 
\end{equation*}
\end{enumerate}
\end{lemma}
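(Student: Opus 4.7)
The plan is to derive both parts from direct manipulations of the product expansion (\ref{Siegel}), following the standard treatment of Siegel functions in \cite[Chapter 2]{K-L}. The key arithmetic fact I will use is that the exponent $M = \frac{12N}{\gcd(6,N)}$ is divisible by both $12$ and $2N$: the first divisibility will kill the $12$-th root of unity appearing in the $\mathrm{SL}_2(\mathbb{Z})$-transformation of $g_\mathbf{v}$, while the second will kill the $(2N)$-th root of unity appearing in the ``translation of the index vector'' formula.

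For part (i), I would compute directly from (\ref{Siegel}). Writing $\mathbf{u} = \mathbf{v} + (m, n)$ with $m, n \in \mathbb{Z}$ and using the identities $q_{z + m\tau + n} = q^m q_z$, $\mathbf{B}_2(v_1 + m) - \mathbf{B}_2(v_1) = m(2v_1 + m - 1)$, and a telescoping of the shifted infinite product back to the unshifted one, the ratio $g_\mathbf{u}(\tau)/g_\mathbf{v}(\tau)$ collapses (after all powers of $q$ and $q_z$ cancel) to an explicit exponential of the form $\pm e^{\pi i(nv_1 - mv_2)}$, which is a $(2N)$-th root of unity when $\mathbf{v} \in \frac{1}{N}M_{1,2}(\mathbb{Z})$. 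The case $\mathbf{u} = -\mathbf{v}$ is the even simpler direct computation $g_{-\mathbf{v}} = -g_\mathbf{v}$. Raising both equalities to the $M$-th power (with $M$ even and $2N \mid M$) gives part (i).

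For part (ii), I would first invoke the fundamental transformation law $g_\mathbf{v}(\gamma\tau) = \varepsilon_\gamma\, g_{\mathbf{v}\gamma}(\tau)$ for $\gamma \in \mathrm{SL}_2(\mathbb{Z})$, where $\varepsilon_\gamma$ is a $12$-th root of unity depending only on $\gamma$; this follows from the identification of $g_\mathbf{v}$ with the product of a Klein form and $\eta(\tau)^2$, together with the classical transformation formulas for those objects. Since $12 \mid M$, the $M$-th power wipes out $\varepsilon_\gamma$. Applied to $\gamma \in \Gamma(N)$, for which $\mathbf{v}\gamma \equiv \mathbf{v} \pmod{M_{1,2}(\mathbb{Z})}$, part (i) then shows that $g_\mathbf{v}^M$ is $\Gamma(N)$-invariant; and expanding the $M$-th power of (\ref{Siegel}) and noting that $\frac{M}{2}\mathbf{B}_2(v_1) \in \frac{1}{N}\mathbb{Z}$ and $e^{\pi i M v_2(v_1 - 1)} \in \mathbb{Q}(\zeta_N)$ shows that $g_\mathbf{v}^M \in \mathcal{F}_N$. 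The transformation rule for $\gamma \in \mathrm{SL}_2(\mathbb{Z})/\langle -I_2\rangle$ is then the same identity raised to the $M$-th power.

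For the diagonal generators $\gamma = \left[\begin{smallmatrix}1 & 0\\0 & d\end{smallmatrix}\right]$ with $d \in (\mathbb{Z}/N\mathbb{Z})^\times$, whose action on $\mathcal{F}_N$ is $\sigma_d : \zeta_N \mapsto \zeta_N^d$ on Fourier coefficients, I would apply $\sigma_d$ term-by-term to the $M$-th power of (\ref{Siegel}); since the only roots of unity appearing arise from $e^{\pi i v_2(v_1 - 1)}$ and from $q_z^{\pm 1} = \zeta_N^{\pm N v_2} q^{\pm v_1}$, and both involve $v_2$ linearly, the effect of $\sigma_d$ is precisely to replace $v_2$ by $d v_2$, matching $\mathbf{v}\gamma = (v_1, d v_2)$. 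The main obstacle throughout is the bookkeeping required to verify that the single exponent $M$ is sharp enough to absorb all the root-of-unity ambiguities — the $12$-th roots from the Klein-form transformation and the $(2N)$-th roots from translating the index vector — while still keeping the resulting Fourier coefficients inside $\mathbb{Q}(\zeta_N)$ rather than some larger cyclotomic field.
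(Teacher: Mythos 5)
Your proposal is correct and follows the same route as the paper, which simply cites the product expansion (\ref{Siegel}) together with the Kubert--Lang results (Theorem 1.1 of Chapter 2 and Lemma 5.1 of Chapter 3 of \cite{K-L}) whose content you re-derive: the index-translation factor (a $\pm$ sign times a $2N$-th root of unity), the $\mathrm{SL}_2(\mathbb{Z})$-law with a $12$-th root of unity from $\eta^2$, and the $\sigma_d$-action read off from the $q$-expansion, all absorbed by the exponent $\tfrac{12N}{\gcd(6,N)}$, which is indeed divisible by $12$ and $2N$. The only cosmetic omission is the routine remark that meromorphy at the cusps (needed for membership in $\mathcal{F}_N$) also follows from the transformation rule, since each $g_{\mathbf{v}\gamma}$ has a fractional-power $q$-product expansion.
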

\begin{proof}
See (\ref{Siegel}) and \cite[Theorem 1.1 in Chapter 2 and Lemma 5.1 in Chapter 3]{K-L}.
\end{proof}

\begin{lemma}\label{Siegelgenerate}
Assume that  $D_\mathcal{O}\neq-3,\,-4$ and $N\geq2$, and let
$g=g_{\left[\begin{smallmatrix}0&\frac{1}{N}\end{smallmatrix}\right]}(\tau_\mathcal{O})^\frac{12N}{\gcd(6,\,N)}$.
\begin{enumerate}
\item[\textup{(i)}] The value $g$ is a nonzero real algebraic integer. 
\item[\textup{(ii)}] For any nonzero integer $m$, 
the value $g^m$ generates $K_{\mathcal{O},\,\{1+N\mathbb{Z}\}}$ over $K$.
\end{enumerate}
\end{lemma}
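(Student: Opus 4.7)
\textit{Plan.} For part (i), I would verify the three claims independently. Nonvanishing is immediate from the product expansion (\ref{Siegel}): every factor of $g_{[0,1/N]}(\tau_\mathcal{O})$ is nonzero since $|q|<1$ on $\mathbb{H}$ and $1-\zeta_N\neq 0$. For algebraic integrality, Lemma~\ref{Siegelarithmetic}(ii) places $g_{[0,1/N]}^{12N/\gcd(6,N)}$ in $\mathcal{F}_N$; this function is integral over $\mathbb{Z}[j]$ by the classical theory of Siegel units in \cite{K-L}, and combining with Lemma~\ref{singularj} applied to $j(\tau_\mathcal{O})$ gives that $g$ is an algebraic integer.

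For the reality claim in part (i), I would compute $\overline{g_{[0,1/N]}(\tau)}$ directly from (\ref{Siegel}) using $\overline{\zeta_N}=\zeta_N^{-1}$ and $\overline{e^{2\pi\mathrm{i}\tau}}=e^{2\pi\mathrm{i}(-\overline{\tau})}$. A short manipulation yields
\begin{equation*}
\overline{g_{[0,1/N]}(\tau)}=-g_{[0,1/N]}(-\overline{\tau}),
\end{equation*}
and since $12N/\gcd(6,N)$ is always even the sign disappears after raising to that power. By (\ref{tauO}), a case analysis on $D_\mathcal{O}\bmod 4$ shows that $-\overline{\tau_\mathcal{O}}$ equals either $\tau_\mathcal{O}$ or $T(\tau_\mathcal{O})=\tau_\mathcal{O}+1$; since $[0,1/N]\cdot T=[0,1/N]$, Lemma~\ref{Siegelarithmetic}(i), (ii) yield $T$-invariance of $g_{[0,1/N]}^{12N/\gcd(6,N)}$, and therefore $\overline{g}=g$.

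For part (ii), the plan is to describe the Galois conjugates of $g$ explicitly through the isomorphism $\phi_{\mathcal{O},\{1+N\mathbb{Z}\}}$ of Proposition~\ref{CG} and then count. Combining that explicit formula with Lemma~\ref{Siegelarithmetic}(ii), for $Q=ax^2+bxy+cy^2\in\mathcal{Q}(D_\mathcal{O},N)$ with $aa'\equiv 1\pmod{N}$ I obtain
\begin{equation*}
g^{\phi_{\mathcal{O},\{1+N\mathbb{Z}\}}([Q])}
=g_{[0,\,a'/N]}^{12N/\gcd(6,N)}\!\left(-\overline{\omega}_Q-\tfrac{a'(b+b_\mathcal{O})}{2}\right),
\end{equation*}
again a nonzero Siegel value. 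A class $[Q]$ lies in the stabilizer of $g^m$ exactly when the ratio of this value to $g$ is an $|m|$-th root of unity in $K_{\mathcal{O},\{1+N\mathbb{Z}\}}$. The hypothesis $D_\mathcal{O}\neq -3,-4$ gives $\mathcal{O}_K^\times=\{\pm 1\}$, which together with the reality argument of part (i) applied to each conjugate cuts possible multipliers down to $\pm 1$; a leading $q$-expansion comparison, or an appeal to the independence-modulo-roots-of-unity results for Siegel units in \cite{K-L}, then excludes the $-1$ case and forces the stabilizer to be trivial, so that $K(g^m)=K_{\mathcal{O},\{1+N\mathbb{Z}\}}$.

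The main obstacle is precisely the last step: systematically ruling out root-of-unity multipliers among Galois conjugates of $g$. The sign ambiguity $\sigma(g)=\pm g$ is the most delicate point, since reality alone does not discriminate between the two possibilities; its resolution will require either a careful analysis of leading $q$-expansion coefficients under the explicit Galois action displayed above, or direct use of the Kubert--Lang structure theorem for the group of modular units.
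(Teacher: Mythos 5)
Your part (i) is essentially fine and matches what the paper outsources to its references: nonvanishing from the product (\ref{Siegel}), integrality from Lemma \ref{Siegelarithmetic} (ii) together with integrality of Siegel units over $\mathbb{Z}[j]$ and Lemma \ref{singularj}, and reality from the identity $\overline{g_{[0,1/N]}(\tau)}=-g_{[0,1/N]}(-\overline{\tau})$, evenness of $12N/\gcd(6,N)$, and $T$-invariance; this is the content of \cite[$\S$3]{K-S}, which is what the paper cites. (Your displayed conjugate formula carries a redundant translation by $a'(b+b_\mathcal{O})/2$, which is harmless since the index vector has first entry $0$.)

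The genuine gap is in (ii). The paper proves (ii) by invoking \cite[Theorem 1.1]{J-K}, which is precisely the statement that every nonzero power of $g$ generates $K_{\mathcal{O},\,\{1+N\mathbb{Z}\}}$ over $K$. Your plan reduces (ii), via Proposition \ref{CG} and Galois theory, to exactly the hard assertion of that theorem, namely that no nontrivial conjugate $\sigma(g)$ equals $\zeta g$ for a root of unity $\zeta$, and then stops, as you yourself acknowledge. Moreover the intermediate reduction you propose does not work as stated: the relevant roots of unity are those lying in $K_{\mathcal{O},\,\{1+N\mathbb{Z}\}}$, which contains $K(\zeta_N)$ because the constant field of $\mathcal{F}_N$ is $\mathbb{Q}(\zeta_N)$, so the fact that $\mathcal{O}_K^\times=\{\pm1\}$ is beside the point; and the conjugates $g_{[0,\,a'/N]}^{12N/\gcd(6,N)}(-\overline{\omega}_Q)$ are not real in general, so the reality argument of (i) does not transfer to them. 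The remedies you suggest are also not obviously adequate: a ``leading $q$-expansion comparison'' compares modular functions, whereas here one must compare complex numbers (values at a CM point), and the Kubert--Lang structure theorem governs multiplicative relations among modular units as functions, not among their singular values. The established proof (in \cite{K-S} and \cite{J-K}) is a quantitative analytic argument: one estimates $|\sigma(g)|$ through the product expansion after moving the CM points to the fundamental domain and shows $|\sigma(g)|\neq|g|$ for every $\sigma\neq\mathrm{id}$, which kills all root-of-unity multipliers at once and hence handles every nonzero exponent $m$. That estimate is the heart of part (ii) and is missing from your proposal.
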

\begin{proof}
\begin{enumerate}
\item[(i)] See \cite[$\S$3]{K-S}, Lemma \ref{singularj} and (\ref{Siegel}).
\item[(ii)] See \cite[Theorem 1.1]{J-K}.
\end{enumerate}
\end{proof}

\begin{lemma}\label{splitcompletely}
Let $L$ be a finite Galois extension of $\mathbb{Q}$.
Let $\nu$ be a primitive generator of $L$ over $\mathbb{Q}$ as an algebraic integer 
with $F(X)=\mathrm{irr}(\nu,\,\mathbb{Q})$ \textup{(}$\in\mathbb{Z}[X]$\textup{)}. 
If $p$ is a prime not dividing the discriminant of $F(X)$, then 
\begin{equation*}
\textrm{$p$ splits completely in $L$}\quad\Longleftrightarrow\quad
\textrm{$F(X)\equiv0\Mod{p}$ has an integer solution}. 
\end{equation*}
\end{lemma}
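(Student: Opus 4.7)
The plan is to reduce the lemma to the Kummer--Dedekind theorem on factorization of primes in orders, using the Galois hypothesis crucially in the backward direction.

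First I would observe that if we write $\mathrm{disc}(F(X)) = [\mathcal{O}_L : \mathbb{Z}[\nu]]^2 \cdot d_L$ (where $d_L$ is the discriminant of $L$), then the hypothesis $p \nmid \mathrm{disc}(F(X))$ forces both $p \nmid d_L$ (so $p$ is unramified in $L$) and $p \nmid [\mathcal{O}_L : \mathbb{Z}[\nu]]$ (so Kummer--Dedekind applies to the order $\mathbb{Z}[\nu]$). Thus, if $\overline{F}(X) \in \mathbb{F}_p[X]$ denotes the reduction modulo $p$ and
\begin{equation*}
\overline{F}(X) = \prod_{i=1}^{g}\overline{F}_i(X)^{e_i}
\end{equation*}
is its factorization into distinct monic irreducibles in $\mathbb{F}_p[X]$, then the prime ideal factorization of $p\mathcal{O}_L$ has exactly $g$ distinct prime factors, each with ramification index $e_i$ and residue degree $\deg(\overline{F}_i)$. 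Since $p$ is unramified, every $e_i = 1$.

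Next, for the forward direction, suppose $p$ splits completely in $L$, so $p\mathcal{O}_L$ is a product of $[L:\mathbb{Q}] = \deg(F)$ distinct prime ideals, each of residue degree one. By Kummer--Dedekind this means $\overline{F}(X)$ splits into $\deg(F)$ distinct monic linear factors in $\mathbb{F}_p[X]$, and in particular $F(X) \equiv 0 \Mod{p}$ has an integer solution.

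For the converse, assume that $F(X) \equiv 0 \Mod{p}$ admits an integer solution. Then $\overline{F}(X)$ has at least one linear factor, so by Kummer--Dedekind there exists a prime $\mathfrak{P}$ of $\mathcal{O}_L$ above $p$ with residue degree $f(\mathfrak{P}/p) = 1$. Here is where the Galois hypothesis enters: since $L/\mathbb{Q}$ is Galois, the Galois group $\mathrm{Gal}(L/\mathbb{Q})$ acts transitively on the primes of $\mathcal{O}_L$ lying above $p$, so all such primes share the same ramification index and residue degree. Combined with the unramifiedness established above, every prime above $p$ has ramification index and residue degree equal to one, which by the fundamental identity $\sum e_i f_i = [L:\mathbb{Q}]$ forces $p$ to split completely in $L$.

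I do not expect any real obstacle here; the argument is almost entirely bookkeeping around Kummer--Dedekind, and the only non-formal input is the transitivity of the Galois action on primes above $p$, which is what upgrades the existence of a single linear factor to complete splitting.
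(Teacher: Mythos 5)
Your argument is correct and is essentially the standard proof of this fact: the paper itself gives no argument but simply cites \cite[Proposition 5.11 (iii)]{Cox}, whose proof runs exactly along your lines (the hypothesis $p\nmid\mathrm{disc}(F)$ gives $p\nmid[\mathcal{O}_L:\mathbb{Z}[\nu]]$ and $p\nmid d_L$, Kummer--Dedekind matches primes above $p$ with the irreducible factors of $F$ modulo $p$, and transitivity of the Galois action upgrades one linear factor to complete splitting). No gaps to report.
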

\begin{proof}
See \cite[Proposition 5.11 (iii)]{Cox}. 
\end{proof}

\begin{example}
Let $K=\mathbb{Q}(\sqrt{-n})$ and $\mathcal{O}=\mathbb{Z}[\sqrt{-n}]$
for a positive integer $n$. 
Assume that $D_\mathcal{O}\neq-3,\,-4$ and $N\geq2$.
If we let $g=g_{\left[\begin{smallmatrix}0&\frac{1}{N}\end{smallmatrix}\right]}(\tau_\mathcal{O})^\frac{12N}{\gcd(6,\,N)}$, then we have
\begin{eqnarray*}
K_{\mathcal{O},\,\{1+N\mathbb{Z}\}}
&=&\mathbb{Q}(\sqrt{d_K},\,g^n)\quad\textrm{for any nonzero integer $n$ by Lemma \ref{Siegelgenerate} (ii)}\\
&=&\mathbb{Q}(\sqrt{d_K}g)\quad\textrm{because $g\in\mathbb{R}$ by Lemma \ref{Siegelgenerate} (i)}.
\end{eqnarray*}
Let $F(X)=\mathrm{irr}(\sqrt{d_K}g,\,\mathbb{Q})$. 
We derive
by Lemmas \ref{equivalence} and \ref{splitcompletely}
that if $p$ is a prime dividing neither $2nN$ nor the discriminant of $F(X)$, then
\begin{eqnarray*}
&&p=x^2+ny^2~\textrm{for some}~x,\,y\in\mathbb{Z}~
\textrm{such that}~x\equiv1\Mod{N}~\textrm{and}~y\equiv0\Mod{N}\\
&\Longleftrightarrow&
F(X)\equiv0\Mod{p}~\textrm{has an integer solution}. 
\end{eqnarray*}
By utilizing Proposition \ref{definite} and Lemma \ref{Siegelarithmetic}, 
one can find several concrete examples of $F(X)$ as shown in Table \ref{table}. 
\end{example}

\section {An analogue of Kronecker's congruence relation}\label{Kro}

In this last section, we shall derive 
an analogue of Kronecker's congruence relation on 
special values of a modular function of higher level
as an application of our form class groups. 
\par
Let
$\sigma_{\mathcal{O},\,G}:\mathcal{C}_G(\mathcal{O},\,N)
\stackrel{\sim}{\rightarrow}\mathrm{Gal}(K_{\mathcal{O},\,G}/K)$ be the isomorphism obtained by composing two isomorphisms
\begin{enumerate}
\item[(i)] $\mathcal{C}_G(\mathcal{O},\,N)\stackrel{\sim}{\rightarrow}
I(\mathcal{O}_K,\,\ell_\mathcal{O}N)/
P_G(\mathcal{O}_K,\,\ell_\mathcal{O}N,\,\ell_\mathcal{O}N)$ given in Corollary \ref{CK},
\item[(ii)] $I(\mathcal{O}_K,\,\ell_\mathcal{O}N)/
P_G(\mathcal{O}_K,\,\ell_\mathcal{O}N,\,\ell_\mathcal{O}N)\stackrel{\sim}{\rightarrow}
\mathrm{Gal}(K_{\mathcal{O},\,G}/K)$
induced by the Artin map for the modulus $\ell_\mathcal{O}N\mathcal{O}_K$. 
\end{enumerate}

\begin{lemma}\label{reciprocity}
Let $s$ and $t$ be integers such that $(s\tau_\mathcal{O}+t)\mathcal{O}$ 
is a nontrivial ideal of $\mathcal{O}$ which is prime to $N$. 
If $f\in\mathcal{F}_N$ is finite at $\tau_\mathcal{O}$, then 
\begin{equation*}
f(\tau_\mathcal{O})^{\sigma_{\mathcal{O},\,\{1+N\mathbb{Z}\}}([(s\tau_\mathcal{O}+t)\mathcal{O}])}=f^{\left[\begin{smallmatrix}
t-b_\mathcal{O}s & -c_\mathcal{O}s\\s&t
\end{smallmatrix}\right]}(\tau_\mathcal{O}).
\end{equation*}
\end{lemma}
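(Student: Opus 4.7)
The plan is to deduce the lemma from Shimura's reciprocity law (the main theorem of complex multiplication, cf.~\cite[Chapter 6]{Shimura}) combined with the idelic description of $K_{\mathcal{O},\{1+N\mathbb{Z}\}}$ given in Lemma \ref{idelegroup}(i). Set $\alpha:=s\tau_\mathcal{O}+t$. First I would compute $q_{\tau_\mathcal{O}}(\alpha)$ directly from (\ref{qwrelation}) using the minimal polynomial $\tau_\mathcal{O}^2+b_\mathcal{O}\tau_\mathcal{O}+c_\mathcal{O}=0$: from $\alpha\tau_\mathcal{O}=(t-b_\mathcal{O} s)\tau_\mathcal{O}-c_\mathcal{O} s$ and $\alpha\cdot 1=s\tau_\mathcal{O}+t$ one obtains
\begin{equation*}
M:=q_{\tau_\mathcal{O}}(\alpha)=\begin{bmatrix} t-b_\mathcal{O} s & -c_\mathcal{O} s\\ s & t\end{bmatrix},
\end{equation*}
whose determinant $\mathrm{N}_{K/\mathbb{Q}}(\alpha)$ is coprime to $N$ by hypothesis and Lemma \ref{basic}.

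Next I would reduce to the case that $\alpha\mathcal{O}$ is prime to $\ell_\mathcal{O} N$ rather than merely $N$. Replacing $(s,t)$ by $(s+Ns_1,t+Nt_1)$ leaves $M$ unchanged modulo $N$ and preserves the ray class $[\alpha\mathcal{O}]$ in $I(\mathcal{O},N)/P_{\{1+N\mathbb{Z}\}}(\mathcal{O},N)$: for $\alpha'\equiv\alpha\pmod{N\mathcal{O}}$ with $\alpha'\mathcal{O}$ coprime to $N$, picking $r\in\mathcal{O}$ with $\alpha r\equiv 1\pmod{N\mathcal{O}}$ makes $\nu_1:=\alpha'r$ and $\nu_2:=\alpha r$ lie in $1+N\mathcal{O}$, whence $(\alpha'/\alpha)\mathcal{O}=\nu_1\mathcal{O}\cdot\nu_2^{-1}\mathcal{O}\in P_{\{1+N\mathbb{Z}\}}(\mathcal{O},N)$. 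By the Chinese remainder theorem applied to the finitely many primes of $\mathcal{O}_K$ lying above prime divisors of $\ell_\mathcal{O}$ not dividing $N$, I can choose $(s_1,t_1)$ so that $\alpha'$ vanishes at none of these primes, forcing $\mathrm{N}_{K/\mathbb{Q}}(\alpha')$ coprime to $\ell_\mathcal{O}$ and hence $\alpha'\mathcal{O}$ coprime to $\ell_\mathcal{O} N$.

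Assuming this coprimality, I would define the idele $\xi\in\widehat{K}^\times$ by $\xi_p=1$ for $p\mid\ell_\mathcal{O} N$ and $\xi_p=\alpha$ for $p\nmid\ell_\mathcal{O} N$. Then $\xi\mathcal{O}_K=\alpha\mathcal{O}_K$, so by the definition of $\sigma_{\mathcal{O},\{1+N\mathbb{Z}\}}$ through Corollary \ref{CK} and the Artin map, $[\xi,K]$ restricts on $K_{\mathcal{O},\{1+N\mathbb{Z}\}}$ to $\sigma_{\mathcal{O},\{1+N\mathbb{Z}\}}([\alpha\mathcal{O}])$. Factoring $\xi=\widehat{\alpha}\cdot\eta$ with $\eta_p=\alpha^{-1}$ at $p\mid\ell_\mathcal{O} N$ and $\eta_p=1$ otherwise, the triviality of the Artin symbol on the principal idele $\widehat{\alpha}$ yields $[\xi,K]=[\eta,K]$, and Shimura's reciprocity gives
\begin{equation*}
f(\tau_\mathcal{O})^{[\eta,K]}=f^{q_{\tau_\mathcal{O}}(\eta^{-1})}(\tau_\mathcal{O}).
\end{equation*}
The idele $q_{\tau_\mathcal{O}}(\eta^{-1})$ has $p$-component $M$ for $p\mid\ell_\mathcal{O} N$ and $I_2$ elsewhere; since $\det M$ is coprime to $\ell_\mathcal{O} N$, it lies in $\mathrm{GL}_2(\widehat{\mathbb{Z}})$ and reduces via the canonical $\mathrm{GL}_2(\widehat{\mathbb{Z}}/N\widehat{\mathbb{Z}})\simeq\mathrm{GL}_2(\mathbb{Z}/N\mathbb{Z})$ to $M\bmod N$, since only components at $p\mid N$ contribute and these all equal $M\bmod N$. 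Hence the induced Galois action on $f\in\mathcal{F}_N$ is $f\mapsto f^M$, which completes the argument.

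The main obstacle will be the reduction step in the second paragraph: checking rigorously that the adjustment of $(s,t)$ by multiples of $N$ preserves the ray class $[\alpha\mathcal{O}]$ (not just the residue of $M$ modulo $N$), so that the computation performed under the coprimality assumption transfers to the general hypothesis of the lemma.
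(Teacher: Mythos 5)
Your argument is correct, and it is in substance the argument the paper leaves implicit: the paper's ``proof'' of Lemma \ref{reciprocity} is only a citation of (\ref{F_Ngeneration}) together with Stevenhagen's formula (3.4), which is itself obtained by exactly the route you take --- compute $q_{\tau_\mathcal{O}}(s\tau_\mathcal{O}+t)$ from (\ref{qwrelation}) (your matrix $M$ agrees with (\ref{q(s)})), build an idele equal to $1$ at the primes dividing $\ell_\mathcal{O}N$ and to $\alpha$ elsewhere, kill the principal idele, and apply Shimura's reciprocity law so that only the reduction of $M$ modulo $N$ acts on $\mathcal{F}_N$. Your idele bookkeeping is consistent with the paper's own normalization (compare the map $\phi(s)=[\nu_s s\mathcal{O}_K]$ in the proof of Lemma \ref{idelegroup}), and a sanity check on the constants $\mathbb{Q}(\zeta_N)\subset\mathcal{F}_N$ confirms you get $M$ and not $M^{-1}$. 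What you add beyond the citation is the reduction from ``prime to $N$'' to ``prime to $\ell_\mathcal{O}N$'', which is genuinely needed because $\sigma_{\mathcal{O},\{1+N\mathbb{Z}\}}$ is defined through Corollary \ref{CK}, i.e.\ through representatives in $I(\mathcal{O},\ell_\mathcal{O}N)$ (Proposition \ref{lNN}). The one place to tighten is that step: you cannot run the Chinese remainder argument modulo primes of $\mathcal{O}_K$, since elements of the coset $\alpha+N\mathcal{O}$ only realize residues coming from $\mathcal{O}$, and for $q\mid\ell_\mathcal{O}$ the image of $\mathcal{O}$ in $\mathcal{O}_K/\mathfrak{q}$ is just $\mathbb{F}_q$. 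Instead work modulo $q\mathcal{O}$ for each prime $q\mid\ell_\mathcal{O}$ with $q\nmid N$: the ring $\mathcal{O}/q\mathcal{O}$ is local with residue field $\mathbb{F}_q$ (as $q\mid\ell_\mathcal{O}$ divides $D_\mathcal{O}$), so avoiding its unique maximal ideal is a single congruence condition on $t_1$ modulo $q$, solvable because $N$ is invertible mod $q$; this yields $\alpha'\equiv\alpha\ (\mathrm{mod}\ N\mathcal{O})$ with $\mathrm{N}_{K/\mathbb{Q}}(\alpha')$ prime to $\ell_\mathcal{O}N$, and the rest of your reduction (the $\nu_1\mathcal{O}(\nu_2\mathcal{O})^{-1}\in P_{\{1+N\mathbb{Z}\}}(\mathcal{O},N)$ computation, and $M'\equiv M\ (\mathrm{mod}\ N)$ so $f^{M'}=f^{M}$) goes through unchanged.
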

\begin{proof}
See (\ref{F_Ngeneration}) and \cite[(3.4)]{Stevenhagen}. 
\end{proof}

\begin{lemma}\label{decomposed}
Let $p$ be a prime not dividing $D_\mathcal{O}$. Then, $p$ is decomposed with respect to $\mathcal{O}$ if and only if
the quadratic congruence $x^2\equiv D_\mathcal{O}\Mod{4p}$ has an integer solution $x=s$. In this case, 
$\mathfrak{p}=\displaystyle\left[\frac{-s+\sqrt{D_\mathcal{O}}}{2},\,p\right]$ is a proper $\mathcal{O}$-ideal satisfying  $p\mathcal{O}=\mathfrak{p}\overline{\mathfrak{p}}$ with $\mathfrak{p}\neq
\overline{\mathfrak{p}}$. 
\end{lemma}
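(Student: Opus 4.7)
My plan is to reduce the biconditional to the decomposition question in $\mathcal{O}_K$ via Lemma \ref{isomorphism}, match the criterion against the stated congruence through a direct Chinese Remainder Theorem argument with separate treatment of $p=2$, and then construct $\mathfrak{p}$ by an explicit lattice calculation.

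Since $p\nmid D_\mathcal{O}=\ell_\mathcal{O}^2 d_K$ forces $p\nmid\ell_\mathcal{O}$, the $p$-part of the isomorphism in Lemma \ref{isomorphism} shows that $p$ is decomposed in $\mathcal{O}$ if and only if it is decomposed in $\mathcal{O}_K$. For odd $p$, Dedekind's criterion gives decomposition iff $(d_K/p)=1$, equivalently (using $\gcd(p,\ell_\mathcal{O})=1$) iff $D_\mathcal{O}$ is a nonzero square modulo $p$. Because $D_\mathcal{O}\equiv 0$ or $1\Mod{4}$, the piece modulo $4$ of $x^2\equiv D_\mathcal{O}\Mod{4p}$ is automatically solvable, and CRT then yields the overall equivalence. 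For $p=2$, $p\nmid D_\mathcal{O}$ forces $\ell_\mathcal{O}$ odd and $d_K\equiv 1\Mod{4}$; then $x^2\equiv D_\mathcal{O}\Mod{8}$ is solvable iff $D_\mathcal{O}\equiv 1\Mod{8}$ iff $d_K\equiv 1\Mod{8}$, which is the classical criterion for $2$ being decomposed in $\mathcal{O}_K$.

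For the construction, given $s$ with $s^2\equiv D_\mathcal{O}\Mod{4p}$, set $\omega=(-s+\sqrt{D_\mathcal{O}})/2$ and $c=(s^2-D_\mathcal{O})/(4p)\in\mathbb{Z}$, so that $\omega^2+s\omega+pc=0$. The parity constraint $s\equiv D_\mathcal{O}\Mod{2}$ gives $\omega-\tau_\mathcal{O}\in\mathbb{Z}$, hence $\omega\in\mathcal{O}$. To show that $\mathfrak{p}=[\omega,p]$ is a proper $\mathcal{O}$-ideal, I compute its multiplier ring $\{\alpha\in K:\alpha\mathfrak{p}\subseteq\mathfrak{p}\}$ by writing $\alpha=u+v\omega$ with $u,v\in\mathbb{Q}$: the condition $\alpha p\in\mathfrak{p}$ forces $u\in\mathbb{Z}$ and $v\in p^{-1}\mathbb{Z}$, while $\alpha\omega\in\mathfrak{p}$, combined with $\omega^2=-s\omega-pc$ and the crucial $\gcd(p,s)=1$ (forced by $p\nmid D_\mathcal{O}$ and $s^2\equiv D_\mathcal{O}\Mod{p}$), upgrades $v$ to $\mathbb{Z}$; so the multiplier ring is $\mathbb{Z}+\mathbb{Z}\omega=\mathcal{O}$. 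Using $\omega\overline{\omega}=pc$ and $\omega+\overline{\omega}=-s$, I then expand $\mathfrak{p}\overline{\mathfrak{p}}=p\langle c,\omega,s,p\rangle_\mathbb{Z}$; the coprimality $\gcd(p,s)=1$ collapses the inner lattice to $\mathcal{O}$, giving $\mathfrak{p}\overline{\mathfrak{p}}=p\mathcal{O}$. Finally, $\mathfrak{p}=\overline{\mathfrak{p}}$ would entail $\overline{\omega}=-s-\omega\in\mathfrak{p}$ and hence $p\mid s$, which is impossible.

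The main obstacle is the $p=2$ case of the biconditional, which requires separate parity bookkeeping on the discriminant, together with the multiplier-ring verification for $\mathfrak{p}$; both hinge on the single coprimality $\gcd(p,s)=1$ derived from $p\nmid D_\mathcal{O}$.
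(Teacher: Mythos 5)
Your proposal is correct, but it takes a genuinely different route from the paper, which disposes of this lemma with a single citation to \cite[Theorem 3 in $\S$9.5]{A-G} rather than proving it. You reprove the textbook result from scratch in two steps: (a) you transfer the splitting question to the maximal order and match it with solvability of $x^2\equiv D_\mathcal{O}\Mod{4p}$ via the Legendre/Kronecker symbol and the Chinese Remainder Theorem, with the correct separate parity bookkeeping at $p=2$ (odd squares are $1$ modulo $8$, $\ell_\mathcal{O}^2\equiv1\Mod{8}$, and $2$ splits iff $d_K\equiv1\Mod{8}$); and (b) you verify directly that $\mathfrak{p}=[\omega,\,p]$ with $\omega=\frac{-s+\sqrt{D_\mathcal{O}}}{2}$ and $\omega^2+s\omega+pc=0$ has multiplier ring exactly $\mathcal{O}$, that $\mathfrak{p}\overline{\mathfrak{p}}=p\,\langle c,\,\omega,\,s,\,p\rangle_\mathbb{Z}=p\mathcal{O}$, and that $\mathfrak{p}\neq\overline{\mathfrak{p}}$, everything hinging on $\gcd(s,\,p)=1$, which indeed follows from $p\nmid D_\mathcal{O}$. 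Your approach buys self-containedness and makes visible exactly where the hypothesis $p\nmid D_\mathcal{O}$ enters; the paper's citation buys brevity, since the quoted theorem is precisely this statement for orders. One small repair: in step (a) you appeal to Lemma \ref{isomorphism}, which concerns ideals prime to $\ell_\mathcal{O}N$, whereas the present lemma permits $p\mid N$; you should instead invoke the conductor-only correspondence (the same statement with $N$ replaced by $1$, cf. \cite[$\S$7.C]{Cox}) or simply note that $\mathcal{O}\otimes_\mathbb{Z}\mathbb{Z}_p=\mathcal{O}_K\otimes_\mathbb{Z}\mathbb{Z}_p$ whenever $p\nmid\ell_\mathcal{O}$. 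This is a cosmetic fix and is moot in the paper's only application (Theorem \ref{congruence}), where $p\nmid D_\mathcal{O}N$.
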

\begin{proof}
See \cite[Theorem 3 in $\S$9.5]{A-G}. 
\end{proof}

Let $j$ be the elliptic modular function defined on $\mathbb{H}$. 

\begin{lemma}\label{singularj}
If $\tau\in K\cap\mathbb{H}$, then the singular value $j(\tau)$ is an algebraic integer. 
\end{lemma}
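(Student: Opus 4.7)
My plan is to invoke the classical theory of the modular equation. For $\tau \in K \cap \mathbb{H}$, consider the lattice $L = [\tau,\,1]$ and its endomorphism order
\begin{equation*}
\mathcal{O}' = \{\mu \in K ~|~ \mu L \subseteq L\},
\end{equation*}
which properly contains $\mathbb{Z}$ since it contains $\mathbb{Z}[\tau]$.

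First I would choose an element $\mu \in \mathcal{O}' \setminus \mathbb{Z}$ whose norm $m = \mathrm{N}_{K/\mathbb{Q}}(\mu)$ is \emph{not} a perfect square. Such an element exists because the norms of $\mu - n$ for $n \in \mathbb{Z}$ form a non-constant integer quadratic in $n$, which cannot take a square value for every $n$. Writing $\mu \tau = a \tau + b$ and $\mu = c \tau + d$ with integers $a,\,b,\,c,\,d$, I obtain
\begin{equation*}
\alpha = \begin{bmatrix} a & b \\ c & d \end{bmatrix} \in M_2(\mathbb{Z})
\end{equation*}
with $\det \alpha = m$ and $\alpha(\tau) = \tau$ as a fractional linear transformation, so that $j(\alpha \tau) = j(\tau)$.

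Next I would appeal to the classical modular polynomial $\Phi_m(X,\,Y) \in \mathbb{Z}[X,\,Y]$, whose defining property is that $\Phi_m(j(\beta\tau'),\,j(\tau')) = 0$ for every $\tau' \in \mathbb{H}$ and every primitive $\beta \in M_2(\mathbb{Z})$ of determinant $m$. After dividing $\alpha$ by $\gcd(a,b,c,d)$ to make it primitive and adjusting $m$ accordingly (an initial choice of $\mu$ is safeguarded to keep the new determinant non-square), specialising at $\tau' = \tau$ yields
\begin{equation*}
\Phi_m(j(\tau),\,j(\tau)) = 0.
\end{equation*}
The concluding step is the classical theorem of Kronecker that when $m$ is not a perfect square, the one-variable polynomial $\Phi_m(X,\,X) \in \mathbb{Z}[X]$ has positive degree with leading coefficient $\pm 1$; this exhibits $j(\tau)$ as a root of a monic integer polynomial, hence as an algebraic integer.

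The main obstacle is this last leading-coefficient statement for $\Phi_m(X,\,X)$. The existence of $\Phi_m$ with integer coefficients already follows from the $\mathrm{GL}_2(\widehat{\mathbb{Z}})$-action on $\bigcup_N \mathcal{F}_N$ invoked earlier in the paper, but the $\pm$-monicity requires a separate $q$-expansion analysis of the Hurwitz--Kronecker product expansion of $\Phi_m(X,\,j(\tau))$ over upper-triangular coset representatives. An alternative route, more arithmetic-geometric in flavour, would be to replace the modular-equation step by Deuring's theorem that every elliptic curve with complex multiplication has potentially good reduction at every prime, from which integrality of its $j$-invariant is immediate.
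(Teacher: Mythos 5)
Your proposal is correct and is essentially the same argument as the proof behind the paper's citation (Lang, \emph{Elliptic Functions}, Theorem 4 in Chapter 5): fix $\tau$ by a primitive integral matrix of non-square determinant $m$ coming from a multiplier $\mu\notin\mathbb{Z}$ of $[\tau,\,1]$, and apply Kronecker's theorem that $\Phi_m(X,\,X)$ has leading coefficient $\pm1$ for non-square $m$. One small repair: the multiplier ring $\mathcal{O}'=\{\mu\in K~|~\mu[\tau,\,1]\subseteq[\tau,\,1]\}$ need not contain $\mathbb{Z}[\tau]$ when $\tau$ is not an algebraic integer (e.g.\ $\tau=\omega_Q$ with $a>1$), but it is an order in $K$ (for instance $a\tau\in\mathcal{O}'$), so the required element $\mu\in\mathcal{O}'\setminus\mathbb{Z}$ of non-square norm still exists and the rest of your argument goes through unchanged.
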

\begin{proof}
See \cite[Theorem 4 in Chapter 5]{Lang87}. 
\end{proof}

\begin{theorem}\label{congruence}
Let $f$ be a meromorphic modular function for $\Gamma_G$ with rational Fourier coefficients which is
integral over $\mathbb{Z}[j]$. 
If $p$ is a prime such that
\begin{enumerate}
\item[\textup{(i)}]  it is relatively prime to $D_\mathcal{O}N$,
\item[\textup{(ii)}] it is decomposed with respect to $\mathcal{O}$,
\item[\textup{(iii)}] $p+N\mathbb{Z}\in G$ or $-p+N\mathbb{Z}\in G$,
\end{enumerate}
then we have the congruence relation 
\begin{equation*}
\left(f(\omega)^p-f\left(\frac{\omega}{p}\right)\right)
\left(f(\omega)-f\left(\frac{\omega}{p}\right)^p\right)\equiv0\Mod{p\mathcal{O}_{K_{\mathcal{O},\,G}}}~
\textrm{with}~\omega=\frac{s+\sqrt{D_\mathcal{O}}}{2}
\end{equation*}
where $s$ is an integer satisfying $s^2\equiv D_\mathcal{O}\Mod{4p}$. 
\end{theorem}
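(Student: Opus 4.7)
My plan is to realize $f(\omega)$ and $f(\omega/p)$ as images of $f(\tau_\mathcal{O})$ under the Artin symbols at the two primes of $K$ above $p$, and then extract the congruence from the Frobenius action on residue fields.

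By Lemma~\ref{decomposed} applied with $-s$ in place of $s$ (which is also a solution of $x^2\equiv D_\mathcal{O}\Mod{4p}$), setting $\mathfrak{p}=[\omega,\,p]$ gives $p\mathcal{O}=\mathfrak{p}\overline{\mathfrak{p}}$ with $\overline{\mathfrak{p}}=[\overline{\omega},\,p]\neq\mathfrak{p}$. Hence $[\omega,\,1]=\mathcal{O}$ and $[\omega/p,\,1]=\frac{1}{p}\mathfrak{p}=\overline{\mathfrak{p}}^{-1}$. I would introduce the three forms
\begin{equation*}
Q_1=x^2-sxy+\tfrac{s^2-D_\mathcal{O}}{4}y^2,\quad
Q_2=px^2-sxy+\tfrac{s^2-D_\mathcal{O}}{4p}y^2,\quad
Q_3=px^2+sxy+\tfrac{s^2-D_\mathcal{O}}{4p}y^2,
\end{equation*}
all lying in $\mathcal{Q}(D_\mathcal{O},\,N)$ by hypothesis (i), with $\omega_{Q_1}=\omega$, $\omega_{Q_2}=\omega/p$, $\omega_{Q_3}=-\overline{\omega}/p$. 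Hypothesis (iii) places $p\mathcal{O}\in P_G(\mathcal{O},\,N)$ (take $t=p$ or $t=-p$), so $[\mathfrak{p}][\overline{\mathfrak{p}}]=1$ in $I(\mathcal{O},\,N)/P_G(\mathcal{O},\,N)$. By Theorem~\ref{Gformclassgroup} this yields $[Q_1]=[Q_0]$, $[Q_2]\leftrightarrow[\mathfrak{p}]$, and $[Q_3]\leftrightarrow[\overline{\mathfrak{p}}]$, and Corollary~\ref{phiOG} then identifies $\phi_{\mathcal{O},\,G}([Q_2])$ and $\phi_{\mathcal{O},\,G}([Q_3])$ with the Artin symbols $\sigma_\mathfrak{p}$ and $\sigma_{\overline{\mathfrak{p}}}$; moreover $\omega$ is $\Gamma_G$-equivalent to $\tau_\mathcal{O}$, so $f(\omega)=f(\tau_\mathcal{O})$.

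Next I would simplify the explicit formula of Corollary~\ref{phiOG}. Using the parity relation between $s$ and $b_\mathcal{O}$ forced by $s^2\equiv D_\mathcal{O}\Mod{4}$, one writes modulo $N$
\begin{equation*}
\gamma_{Q_i}\equiv T^{k_i}\begin{bmatrix}1 & 0\\ 0 & a'\end{bmatrix}\Mod{NM_2(\mathbb{Z})}\quad(i=2,\,3)
\end{equation*}
for an integer $k_i$. Since $T\in\Gamma_G$ (so $f^{T^{k_i}}=f$) and $f$ has rational Fourier coefficients (so the diagonal piece, which acts by $\zeta_N\mapsto\zeta_N^{a'}$, fixes them), we obtain $f^{\gamma_{Q_i}}=f$. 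Combined with Lemma~\ref{qbar}, the formula of Corollary~\ref{phiOG} simplifies to
\begin{equation*}
\sigma_{\overline{\mathfrak{p}}}(f(\tau_\mathcal{O}))=f(\omega/p),\qquad
\sigma_\mathfrak{p}(f(\tau_\mathcal{O}))=\overline{f(\omega/p)}.
\end{equation*}

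Finally, Lemma~\ref{singularj} together with the integrality of $f$ over $\mathbb{Z}[j]$ places $f(\omega)$ and $f(\omega/p)$ in $\mathcal{O}_{K_{\mathcal{O},\,G}}$. For any prime $\mathfrak{Q}$ of $K_{\mathcal{O},\,G}$ above $\overline{\mathfrak{p}}$, Frobenius gives $f(\omega/p)=\sigma_{\overline{\mathfrak{p}}}(f(\omega))\equiv f(\omega)^p\Mod{\mathfrak{Q}}$, killing the first factor. For any prime $\mathfrak{P}$ above $\mathfrak{p}$, $\sigma_\mathfrak{p}(f(\omega))\equiv f(\omega)^p\Mod{\mathfrak{P}}$; since $K_{\mathcal{O},\,G}/K$ is abelian so that $\sigma_\mathfrak{p}$ fixes $\mathfrak{P}$, applying $\sigma_\mathfrak{p}^{-1}=\sigma_{\overline{\mathfrak{p}}}$ yields $f(\omega)\equiv f(\omega/p)^p\Mod{\mathfrak{P}}$, killing the second factor. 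As every prime of $\mathcal{O}_{K_{\mathcal{O},\,G}}$ above $p$ lies over $\mathfrak{p}$ or $\overline{\mathfrak{p}}$, the product is $\equiv 0\Mod{p\mathcal{O}_{K_{\mathcal{O},\,G}}}$. The main obstacle is the verification $f^{\gamma_{Q_i}}=f$, which simultaneously leverages the modularity of $f$ under $T\in\Gamma_G$ and the rationality of its Fourier coefficients.
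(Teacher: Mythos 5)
Your argument is correct, and it takes a somewhat different route from the paper's, so a comparison is worthwhile. The paper works at level $\{1+N\mathbb{Z}\}$ throughout: it writes $\mathfrak{p}=p\mathcal{O}\,[\omega_Q,\,1]$ for the single form $Q=px^2+sxy+\frac{s^2-D_\mathcal{O}}{4p}y^2$ and computes $f(\tau_\mathcal{O})^{\sigma([\mathfrak{p}])}=f(\omega/p)$ by combining Lemma \ref{reciprocity} (to handle the scalar factor $p\mathcal{O}$) with Proposition \ref{CG}, using hypothesis (iii) twice: once so that the matrix $\left[\begin{smallmatrix}p&*\\0&p'\end{smallmatrix}\right]$ fixes $f$, and once so that $\sigma([p\mathcal{O}])$ restricts to the identity on $K_{\mathcal{O},\,G}$, yielding $f(\omega)=f(\omega/p)^{\sigma([\overline{\mathfrak{p}}])}$. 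You instead stay at level $G$: hypothesis (iii) enters once, to trivialize $[p\mathcal{O}]$ in $I(\mathcal{O},\,N)/P_G(\mathcal{O},\,N)$, after which your $Q_2,\,Q_3$ represent $[\mathfrak{p}]$ and $[\overline{\mathfrak{p}}]$ and Corollary \ref{phiOG} together with Theorem \ref{Gformclassgroup} gives both Galois values $f(\omega/p)$ and $\overline{f(\omega/p)}$ directly, with no appeal to Lemma \ref{reciprocity} or Proposition \ref{CG}; the simplification $\gamma_{Q_i}=T^{k_i}\,\mathrm{diag}(1,a')$, using $T\in\Gamma_G$ and the rationality of the Fourier coefficients, is the same computation the paper performs inside its proof. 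The endgame (integrality via Lemma \ref{singularj}, Frobenius congruences at the primes over $\mathfrak{p}$ and over $\overline{\mathfrak{p}}$, then assembling over all primes above $p$) coincides with the paper's; your swap of which factor dies at which family of primes is only because your $\mathfrak{p}=[\omega,\,p]$ is the paper's $\overline{\mathfrak{p}}$. Two points to make explicit when writing this up: the identification of $\phi_{\mathcal{O},\,G}([Q_i])$ with the Artin symbol of the corresponding $\mathcal{O}_K$-ideal class is not the literal statement of Corollary \ref{phiOG} but follows from its construction (the diagram in Figure \ref{Figure07} together with the definition of $\phi_{\mathcal{O},\,\{1+N\mathbb{Z}\}}$ through Corollary \ref{CK} and the Artin map), so that compatibility should be cited; and the final step from ``every prime above $p$ divides the product'' to divisibility by $p\mathcal{O}_{K_{\mathcal{O},\,G}}$ uses that $p$ is unramified in $K_{\mathcal{O},\,G}$ (true since $p\nmid\ell_\mathcal{O}Nd_K$), which the paper secures by writing out the factorization of $p\mathcal{O}_{K_{\mathcal{O},\,G}}$ and invoking the Chinese remainder theorem.
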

\begin{proof}
Note by (i), (ii) and Lemma \ref{decomposed} that 
there exists an integer $s$ such that $s^2\equiv D_\mathcal{O}\Mod{4p}$ and
$\mathfrak{p}=\left[\frac{-s+\sqrt{D_\mathcal{O}}}{2} ,\,p\right]$ is a proper $\mathcal{O}$-ideal satisfying
$p\mathcal{O}=\mathfrak{p}\overline{\mathfrak{p}}$ with $\mathfrak{p}\neq\overline{\mathfrak{p}}$. 
Then we see that
\begin{equation}\label{ppO}
\mathfrak{p}=p\mathcal{O}[\omega_Q,\,1]\quad\textrm{where}~Q=px^2+sxy+
\frac{s^2-D_\mathcal{O}}{4p}y^2~(\in\mathcal{Q}(D_\mathcal{O},\,N)).
\end{equation}
Since $j(\omega)$ and $j(\frac{\omega}{p})$ are algebraic integers by Lemma
\ref{singularj} and
$f$ is integral over $\mathbb{Z}[j]$,
$f(\omega)$ and $f(\frac{\omega}{p})$ are also algebraic integers. 
Furthermore, we find that
\begin{equation*}
f(\tau_\mathcal{O})=f\left(\omega-\frac{s+b_\mathcal{O}}{2}\right)
=f\left(\begin{bmatrix}1 & -\frac{s+b_\mathcal{O}}{2}\\0&1\end{bmatrix}(\omega)
\right)=f(\omega)
\end{equation*}
because $f$ is modular for $\Gamma_G$, which shows that $f(\omega)$ lies in $K_{\mathcal{O},\,G}$
($\subseteq K_{\mathcal{O},\,\{1+N\mathbb{Z}\}}$)
 by Theorem \ref{generation}. 
 Now, let $\sigma=\sigma_{\mathcal{O},\,\{1+N\mathbb{Z}\}}$ for convenience. We then derive that
\begin{eqnarray*}
f(\tau_\mathcal{O})^{\sigma([\mathfrak{p}])}
&=&f(\tau_\mathcal{O})^
{\sigma([p\mathcal{O}])\sigma([[\omega_Q,\,1]])}
\quad\textrm{by (\ref{ppO})}\\
&=&f^{\left[\begin{smallmatrix}
p & 0\\0&p
\end{smallmatrix}\right]}(\tau_\mathcal{O})^{\sigma([[\omega_Q,\,1]])}\quad
\textrm{by Lemma \ref{reciprocity}}\\
&=&f^{\left[\begin{smallmatrix}
p & 0\\0&p
\end{smallmatrix}\right]\left[\begin{smallmatrix}1&-p'\left(\frac{s+b_\mathcal{O}}{2}\right)\\0&p'\end{smallmatrix}\right]}(-\overline{\omega}_Q)\quad\textrm{by Proposition \ref{CG}}\\
&&\hspace{3cm}\textrm{where $p'$ is an integer such that $pp'\equiv1\Mod{N}$}\\
&=&f^{\left[\begin{smallmatrix}p&-\frac{s+b_\mathcal{O}}{2}\\0&1\end{smallmatrix}\right]}
\left(\frac{\omega}{p}\right)\\
&=&f^{\left[\begin{smallmatrix}1&0\\0&p\end{smallmatrix}\right]
\left[\begin{smallmatrix}p&-\frac{s+b_\mathcal{O}}{2}\\0&p'\end{smallmatrix}\right]}\left(\frac{\omega}{p}\right)\\
&=&f^{\left[\begin{smallmatrix}p&-\frac{s+b_\mathcal{O}}{2}\\0&p'\end{smallmatrix}\right]}\left(\frac{\omega}{p}\right)\quad\textrm{since $f$ has rational Fourier coefficients}\\
&=&f\left(\frac{\omega}{p}\right)\quad\textrm{by \textup{(iii)} and the fact that $f$ is modular for $\Gamma_G$}. 
\end{eqnarray*}
Thus we have
\begin{equation}\label{f(omega/p)}
f(\omega)^{\sigma([\mathfrak{p}])}=f\left(\frac{\omega}{p}\right).
\end{equation}
On the other hand, we see by (iii) that
$p\mathcal{O}=(-p)\mathcal{O}\in P_G(\mathcal{O},\,N)$, and hence
$\sigma([p\mathcal{O}])|_{K_{\mathcal{O},\,G}}=\mathrm{id}_{K_{\mathcal{O},\,G}}$.
So we get by (\ref{f(omega/p)}) that
\begin{equation}\label{f(omega)}
f(\omega)=f(\omega)^{\sigma([p\mathcal{O}])}=
f(\omega)^{\sigma([\mathfrak{p}])\sigma([\overline{\mathfrak{p}}])}
=f\left(\frac{\omega}{p}\right)^{\sigma([\overline{\mathfrak{p}}])}.
\end{equation}
Let $\mathfrak{q}=\mathfrak{p}\mathcal{O}_K$. 
Since $\mathfrak{q}\overline{\mathfrak{q}}=\mathfrak{p}\overline{\mathfrak{p}}\mathcal{O}_K=
p\mathcal{O}_K$, we derive by (i), (ii) and Lemma \ref{isomorphism} that $\mathfrak{q}$ and $\overline{\mathfrak{q}}$ are 
distinct prime ideals of $\mathcal{O}_K$ which are prime to $\ell_\mathcal{O}N$. 
Note further that if $\mathfrak{P}$ is a prime ideal of $\mathcal{O}_{K_{\mathcal{O},\,G}}$ lying above $p$, then 
so is 
$\overline{\mathfrak{P}}$ by Lemma \ref{GoverQ} and 
$\mathfrak{P}\neq\overline{\mathfrak{P}}$.
Now that
\begin{equation*}
f(\omega)^{\sigma([\mathfrak{p}])}
\equiv f(\omega)^{\left(\frac{K_{\mathcal{O},\,G}/K}{\mathfrak{q}}\right)}
\equiv f(\omega)^{|\mathcal{O}_K/\mathfrak{q}|}
\equiv
 f(\omega)^p\Mod{\mathfrak{P}}
\end{equation*}
and
\begin{equation*}
f\left(\frac{\omega}{p}\right)^{\sigma(\overline{\mathfrak{p}})}
\equiv f\left(\frac{\omega}{p}\right)^{\left(\frac{K_{\mathcal{O},\,G}/K}{\overline{\mathfrak{q}}}\right)}
\equiv 
f\left(\frac{\omega}{p}\right)^{|\mathcal{O}_K/\overline{\mathfrak{q}}|}\equiv
f\left(\frac{\omega}{p}\right)^p\Mod{\overline{\mathfrak{P}}}
\end{equation*}
by the definition of an Artin symbol, we obtain by (\ref{f(omega/p)}) and (\ref{f(omega)}) that 
\begin{equation*}
\left(f(\omega)^p-f\left(\frac{\omega}{p}\right)\right)
\left(f(\omega)-f\left(\frac{\omega}{p}\right)^p\right)\equiv0\Mod{\mathfrak{P}\overline{\mathfrak{P}}}.
\end{equation*}
Lastly, since
\begin{equation*}
p\mathcal{O}_{K_{\mathcal{O},\,G}}=(\mathfrak{q}\mathcal{O}_{K_{\mathcal{O},\,G}})
(\overline{\mathfrak{q}}\mathcal{O}_{K_{\mathcal{O},\,G}})=
(\mathfrak{P}_1\mathfrak{P}_2\cdots\mathfrak{P}_g)
(\overline{\mathfrak{P}}_1\overline{\mathfrak{P}}_2\cdots\overline{\mathfrak{P}}_g),
\end{equation*}
where $\mathfrak{P}_1,\,\mathfrak{P}_2,\,\ldots,\,\mathfrak{P}_g$ 
are prime ideals of $\mathcal{O}_{K_{\mathcal{O},\,G}}$ lying above $\mathfrak{p}$, 
we achieve by the Chinese remainder theorem that 
\begin{equation*}
\left(f(\omega)^p-f\left(\frac{\omega}{p}\right)\right)
\left(f(\omega)-f\left(\frac{\omega}{p}\right)^p\right)\equiv0\Mod{p\mathcal{O}_{K_{\mathcal{O},\,G}}}.
\end{equation*}
\end{proof}

\begin{sidewaystable}
\centering
\caption{Minimal polynomials over $\mathbb{Q}$}\label{table}
{\small
\begin{tabular}{c|c|c|c|c|c}
{\normalsize$K$} & 
{\normalsize$D_\mathcal{O}$} & 
{\normalsize$N$} & 
{\normalsize$\mathcal{C}_{\Gamma_1(N)}(D_\mathcal{O},\,N)$} & 
{\normalsize$F(X)=\mathrm{irr}\left(\sqrt{d_K}g_{\left[\begin{smallmatrix}0&\frac{1}{N}\end{smallmatrix}\right]}(\tau_\mathcal{O})^\frac{12N}{\gcd(6,\,N)},\,\mathbb{Q}\right)\phantom{\bigg|}$} & 
{\normalsize discriminant of $F(X)$} \\
\hline $\mathbb{Q}(\sqrt{-2})$ & $-200$ & $3$ &
$\left\{\begin{array}{l}\textrm{[}x^2+50y^2\textrm{]},\\
\textrm{[}2x^2+25y^2\textrm{]},\\
\textrm{[}17x^2+2xy+3y^2\textrm{]},\\
\textrm{[}17x^2-2xy+3y^2\textrm{]},\\
\textrm{[}11x^2-8xy+6y^2\textrm{]},\\
\textrm{[}11x^2+8xy+6y^2\textrm{]},\\
\textrm{[}50x^2+y^2\textrm{]},\\
\textrm{[}25x^2+2y^2\textrm{]},\\
\textrm{[}22x^2-36xy+17y^2\textrm{]},\\
\textrm{[}22x^2+36xy+17y^2\textrm{]},\\
\textrm{[}25x^2+30xy+11y^2\textrm{]},\\
\textrm{[}25x^2-30xy+11y^2\textrm{]}
\end{array}\right\}$ & 
$\begin{array}{l}
X^{24}
+58418434677344 X^{22}
+1263375231780687917184 X^{20}\\
+403818817043131055680665600 X^{18}\\
+75730968484681312433176242483200 X^{16}\\
+8361096391935757794654559611579531264 X^{14}\\
+860683009678299985386510787472645392695296 X^{12}\\
+9907654477954796832790654933192834007418535936 X^{10}\\
+33315019088321396809058767421430556685071338700800 X^8\\
+19239392992571915645005697694048991576255756867993600 X^6\\
+29212993887308366869993711350192889063288845726933581824 X^4\\
+6293984600086664567543704795614781286383616 X^2\\
+68719476736\end{array}$ &
$\begin{array}{l}
2^{1772}\cdot3^{12}\cdot 5^{68}\cdot 7^{120}\cdot 13^{56}\\
\cdot 23^{56}\cdot29^{32}\cdot 31^{8}\cdot 37^8 \cdot 47^{32}\\
\cdot 53^{12}\cdot 61^8 \cdot71^8 \cdot101^{16} \cdot 149^8\\
\cdot 167^8\cdot 173^{12} \cdot 191^8 \cdot197^4 \\
\cdot 311^4\cdot 431^4 \cdot 719^4 \cdot983^8\\
\cdot1801^4 \cdot7369^4\cdot13679^4 \cdot44449^4\\
\cdot91009^4\cdot104399^4\cdot143567^4\\
\cdot184609^4\cdot255049^4 \cdot482021^4\\
\cdot1521649^4 \cdot3139369^4\cdot3857809^4\\
\cdot8698681^4\cdot260370001^4\cdot272850169^4\\
\cdot404455343^4\cdot1532509721761^4\\
\cdot15630971591656081^4
\end{array}$ \\
\hline $\mathbb{Q}(\sqrt{-5})$ & $-180$ & $2$ & 
$\left\{\begin{array}{l}
\textrm{[}x^2+45y^2\textrm{]},\\
\textrm{[}23x^2-2xy+2y^2\textrm{]},\\
\textrm{[}5x^2+9y^2\textrm{]},\\
\textrm{[}7x^2+4xy+7y^2\textrm{]},\\
\textrm{[}45x^2+y^2\textrm{]},\\
\textrm{[}23x^2-44xy+23y^2\textrm{]},\\
\textrm{[}9x^2+5y^2\textrm{]},\\
\textrm{[}7x^2-4xy+7y^2\textrm{]}
\end{array}\right\}$ & 
$\begin{array}{l}
X^{16}+40370081379856476160 X^{14}\\
-2294213210542224903962053836800 X^{12}\\
+32594776263664443712118696387582885888000 X^{10}\\
+1355997164048299289268149453587358102323200000 X^8\\
-4618215678434035548825390724987200304106700800000 X^6\\
+5341315045070297685630774389596962453603745792000000 X^4\\
+87496192498069022574637171465249162202332528640000000 X^2\\
+7205759403792793600000000\end{array}$ & 
$\begin{array}{l}
2^{1296}\cdot3^8\cdot 5^{180}\cdot 11^{44}\cdot 13^{28}\\
\cdot 17^{32}\cdot19^{32}\cdot 31^{36}\cdot 37^8 \cdot 53^4\\
\cdot71^4\cdot73^4\cdot79^4\cdot97^4\cdot113^4\cdot131^4\\
\cdot137^4\cdot139^4\cdot151^4\cdot157^4\cdot173^4\\
\cdot181^4\cdot229^4\cdot4201^4\cdot5281^4\cdot6911^4\\
\cdot21481^4\cdot39551^4\cdot42709^4\cdot112621^4\\
\cdot117841^4\cdot1567261^4\cdot721400461^4\\
\cdot27666986168641^4\\
\cdot1459141468570561^4
\end{array}$
\end{tabular}}
\end{sidewaystable}

\newpage
\bibliographystyle{amsplain}

\begin{thebibliography}{99}

\bibitem {A-G} W. W. Adams and L. J. Goldstein, \textit{Introduction to Number Theory},
Prentice-Hall, Inc., Englewood Cliffs, N.J., 1976.

\bibitem {Bhargava} M. Bhargava, \textit{Higher composition laws I: A new view on Gauss composition, and quadratic generalizations}, Ann. of Math. (2) 159 (2004), no. 1, 217--250.

\bibitem {B-D} H. S. Butts and B. J. Dulin, 
\textit{Composition of binary quadratic forms over integral domains}, Acta Arith. 20 (1972), 223--251.

\bibitem {B-P} H. S. Butts and G. Pall, \textit{Modules and binary quadratic forms}, Acta Arith. 15 (1968), 23--44.

\bibitem {C-Y} I. Chen and N. Yui, \textit{Singular values of Thompson series},
Groups, difference sets, and the Monster (Columbus, OH, 1993), 255--326, 
Ohio State Univ. Math. Res. Inst. Publ., 4, de Gruyter, Berlin, 1996.

\bibitem {Cho} B. Cho, \textit{Primes of the form
$x^2+ny^2$ with conditions $x\equiv1~\mathrm{mod}~N,~y\equiv0~\mathrm{mod}~N$},
J. Number Theory 130 (2010), no. 4, 852--861.

\bibitem {Cox} D. A. Cox, \textit{Primes of the Form $x^2+ny^2$--Fermat, Class field theory, and Complex Multiplication},
3rd ed. with solutions, with contributions by Roger Lipsett,
AMS Chelsea Publishing, Providence, R.I., 2022.

\bibitem {Gauss} C. F. Gauss, \textit{Disquisitiones Arithmeticae}, Leipzig, 1801.

\bibitem {Hasse} H. Hasse, \textit{Neue Begr\"{u}ndung der
komplexen Multiplikation I, II}, J. reine angew. Math. 157 (1927), 115--139,
165 (1931), 64--88.

\bibitem {Janusz} G. J. Janusz, \textit{Algebraic Number Fields},
2nd ed., Grad. Studies in Math. 7, Amer. Math. Soc., Providence,
R.I., 1996.

\bibitem {J-K} H. Y. Jung and C. H. Kim, \textit{Modularity of Galois traces of ray class invariants},
Ramanujan J. 54 (2021), no. 2, 355--383.

\bibitem {J-K-S-Y223} H. Y. Jung, J. K. Koo, D. H. Shin and D. S. Yoon,
\textit{Form class groups and class fields of CM-fields}, Results Math. 78 (2023), no. 1, Paper No. 8, 33 pp. 

\bibitem {J-K-S-Y23} H. Y. Jung, J. K. Koo, D. H. Shin and D. S. Yoon,
\textit{Arithmetic properties of orders in imaginary quadratic fields},
https://arxiv.org/abs/2205.10754, submitted. 

\bibitem {Kaplansky} I. Kaplansky, \textit{Composition of binary quadratic forms}, Studia Math. 31 (1968), 523--530.

\bibitem {Kneser} M. Kneser, \textit{Composition of binary quadratic forms}, J. Number Theory 15 (1982), no. 3, 406--413.

\bibitem {K-S} J. K. Koo and D. H. Shin, \textit{On some arithmetic properties of Siegel functions}, 
Math. Z. 264 (2010), no. 1, 137--177.

\bibitem {K-L} D. Kubert and S. Lang, \textit{Modular Units}, Grundlehren der mathematischen Wissenschaften 244, Spinger-Verlag, New York-Berlin, 1981.

\bibitem {Lang87} S. Lang, \textit{Elliptic Functions}, With an appendix by J. Tate, 2nd ed., Grad. Texts in Math. 112, Spinger-Verlag, New York, 1987.

\bibitem {Neukirch} J. Neukirch, \textit{Class Field Theory}, 
Grundlehren der Mathematischen Wissenschaften 280, Springer-Verlag, Berlin, 1986.

\bibitem {Shimura} G. Shimura, \textit{Introduction to the Arithmetic Theory of Automorphic Functions}, Iwanami Shoten and Princeton University Press, Princeton, N.J., 1971.

\bibitem {Shimura98} G. Shimura, \textit{Abelian Varieties with Complex Multiplication and Modular Functions}, 
Princeton Mathematical Series, 46. Princeton University Press, Princeton, N.J., 1998.

\bibitem {Sohngen} H. S\"{o}hngen, \textit{Zur Komplexen Multiplikation}, Math. Ann. 111 (1935), no. 1, 302--328.

\bibitem {Stevenhagen} P. Stevenhagen, \textit{Hilbert's 12th problem, complex multiplication and Shimura reciprocity},
Class Field Theory -- Its Centenary and Prospect (Tokyo, 1998), 161--176, Adv. Stud. Pure Math. 30, Math. Soc. Japan, Tokyo, 2001.

\bibitem {Wood} M. M. Wood, \textit{Gauss composition over an arbitrary base}, Adv. Math. 226 (2011), no. 2, 1756--1771.

\bibitem {Yoon} D. S. Yoon, \textit{Artin symbols over imaginary quadratic fields}, East Asian Math. J. 40 (2024), no. 1, 95--107. 

\end{thebibliography}

\address{
Department of Mathematics\\
Dankook University\\
Cheonan-si, Chungnam 31116\\
Republic of Korea} {hoyunjung@dankook.ac.kr}
\address{
Department of Mathematical Sciences \\
KAIST \\
Daejeon 34141\\
Republic of Korea} {jkgoo@kaist.ac.kr}
\address{
Department of Mathematics\\
Hankuk University of Foreign Studies\\
Yongin-si, Gyeonggi-do 17035\\
Republic of Korea} {dhshin@hufs.ac.kr}
\address{
Department of Mathematics Education\\
Pusan National University\\
Busan 46241\\Republic of Korea}
{dsyoon@pusan.ac.kr}

\end{document}